\numberwithin{equation}{section}
\theoremstyle{plain}
\newtheorem{theorem}{Theorem}[section]
\newtheorem{corollary}[theorem]{Corollary}
\newtheorem{proposition}[theorem]{Proposition}
\newtheorem{lemma}[theorem]{Lemma}
\theoremstyle{definition}
\newtheorem{definition}[theorem]{Definition}
\theoremstyle{remark}
\newtheorem{remark}[theorem]{Remark}
\newtheorem{example}[theorem]{Example}
\begin{document}

\def\gb{{\mathfrak b}}
\def\ra{\rightarrow}
\def\gm{{\mathfrak m}}

\title[From grids to pseudo-grids of lines: resolution and seminormality]{From grids to pseudo-grids of lines: resolution and seminormality}

\author{Francesca Cioffi}\thanks{Corresponding author: Francesca Cioffi, Dipartimento di Matematica e Applicazioni \lq\lq R. Caccioppoli\rq\rq, Universit\`{a} degli Studi di Napoli Federico II, Via Cintia, 80126 Napoli, Italy, cioffifr@unina.it}
\address{Dipartimento di Matematica e Applicazioni \lq\lq R. Caccioppoli\rq\rq, Universit\`{a} degli Studi di Napoli Federico II, Via Cintia, 80126 Napoli, Italy}
\email{cioffifr@unina.it}

\author{Margherita Guida}
\address{Dipartimento di Matematica e Applicazioni \lq\lq R. Caccioppoli\rq\rq, Universit\`{a} degli Studi di Napoli Federico II, Via Cintia, 80126 Napoli, Italy}
\email{maguida@unina.it}

\author{Luciana Ramella}
\address{Dipartimento di Matematica, Universit\`{a} degli Studi di Genova, Via Dodecaneso, 35 16146 Genova  Italy}
\email{ramella@dima.unige.it}

\keywords{Complete grid of lines, lifting and pseudo-lifting, mapping cone, resolution}
%\date{\today}
\subjclass[2010]{13D02, 13P20, 14M05} 

\begin{abstract}
Over an infinite field $K$, we investigate the minimal free resolution of some configurations of lines. We explicitly  describe the minimal free resolution of {\em complete grids of lines} and obtain an analogous result about the so-called {\em complete pseudo-grids}. Moreover, we characterize the total Betti numbers of configurations that are obtained posing a multiplicity condition on the lines of either a complete grid or a complete pseudo-grid. Finally, we analyze  when a complete pseudo-grid is seminormal, differently from a complete grid. 
The main tools that have been involved in our study are the mapping cone procedure and properties of liftings, of pseudo-liftings and of weighted ideals. 

Although complete grids and pseudo-grids are  hypersurface configurations and many results about such type of configurations have already been stated in literature, we give new contributions, in particular about the maps of the resolution.
\end{abstract}

\maketitle

%%%%%%%%%%%%%%%%%%
%% Introduction %%
%%%%%%%%%%%%%%%%%%

\section{Introduction}

In this paper, we deal with the problem of describing the minimal free resolution of configurations of lines in a projective space $\mathbb P^n_K$ of any dimension $n$ over an infinite field $K$.  
We focus on {\em complete grids of lines}, which are particular configurations of projective lines that have been introduced in \cite{GO2007} and are obtained as the projective closure of affine lines that are parallel to the coordinate axes and pass through a lattice of points. 

A complete grid of lines is a reduced projective variety with nice properties: it is Cohen-Macaulay and its defining ideal is generated by products of linear forms. More generally, configurations of linear varieties are not always Cohen-Macaulay or defined by products of linear forms (see \cite[Proposition 5.7]{BPS2005}). They have been extensively studied by many authors in several mathematical contexts (for motivations and a survey see \cite{SS2013} and the references therein, for example).

The study of configurations of linear varieties is also interesting when multiplicity conditions are given on the varieties. %(see \cite{FHL2015} for a recent contribution and the references therein). 
In this context, a complete grid of lines turns into a fat complete grid of lines if multiplicity conditions are posed on the lines of the grid. When the multiplicity conditions are the same for all the lines, we obtain a {\em $m$-fat complete grid of lines} (see \cite{G2014}).

Complete grids and $m$-fat complete grids are liftings of suitable monomial ideals, that we denote by $\Omega$ and $\Gamma$, respectively (see \cite{GOR2014}). When liftings are replaced by pseudo-liftings, we obtain {\em complete pseudo-grids} and {\em $m$-fat complete pseudo-grids}, in which the affine lines are no more parallel to the coordinate axes. 
The monomial ideal $\Gamma$ is the $m$-th symbolic power of the monomial ideal $\Omega$ and the ideal defining a $m$-fat complete grid (respectively, pseudo-grid) is the $m$-th symbolic power of the ideal defining the complete grid (respectively, pseudo-grid) on which the fat grid is supported, as well (see Proposition~\ref{prop:symbolic power}).

In general, it is not easy to calculate the minimal free resolution of a configuration of linear varieties.
In this paper, we explicitly describe the minimal free resolution of complete grids and pseudo-grids of lines in a projective space of any dimension $n$ by proving, indeed, a conjecture that has been posed in the paper \cite{Guida2008} for complete grids (see \cite{Guida2008, G2014} for the case $n=3$). We also obtain a description of the total Betti numbers of $m$-fat complete grids and pseudo-grids.

Our main result is that, over an infinite field $K$, the minimal free resolution of the defining ideal $I\subset R:=K[x_0,x_1,\dots,x_n]$ of either a complete grid or pseudo-grid of lines of type $(\ell_1,\dots,\ell_n)$ in $\mathbb P^n_K$ is $0 \rightarrow F_{n-2} \rightarrow F_{n-3} \rightarrow \cdots \rightarrow F_1 \rightarrow F_0 \rightarrow I \rightarrow 0$, where
\begin{equation}\label{eq:conjecture}
\begin{array}{lll}
F_0 &=& \oplus _{1 \leq i<j \leq n} R(-\ell_i -\ell_j),\\
F_1 &=& \oplus _{1 \leq i_1 < i_2 < i_3 \leq n} R^2 (-\ell_{i_1} -\ell_{i_2} -\ell_{i_3}),\\
%F_2 &=& \oplus _{1 \leq i_1 < i_2 < i_3 < i_4 \leq n}  R^3 (-\ell_{i_1} -\ell_{i_2} -\ell_{i_3} -\ell_{i_4}),\\
&\vdots & \\
F_{n-3} &=& \oplus_{i=1, \ldots , n}  R^{n-2} (-\ell_1 - \cdots - \widehat{\ell_i} - \cdots - \ell_n),\\
F_{n-2} &=& R^{n-1} (-\ell_1 \ldots -\ell_n).
\end{array}
\end{equation} 
together with a complete description of the maps of this resolution (see Theorem~\ref{th:risoluzione Omega} and Corollary~\ref{cor:congettura}).
We prove this result by applying a mapping cone procedure to the above quoted monomial ideal $\Omega$. This mapping cone procedure provides the free resolution with modules in \eqref{eq:conjecture} (together with the maps), which turns out to be minimal due to properties of $\Omega$ and, more generally, of {\em weighted ideals} (see Section~\ref{sec:weighted ideals}, especially Theorem~\ref{th:betti weighted}). The syzygies of the complete grid can be even obtained by a standard Gr\"obner bases rewriting procedure, in alternative to the mapping cone procedure (see Remark~\ref{rem:dimostrazione indipendente}). 
It is also noteworthy that under certain general conditions pseudo-grids are seminormal, differently from grids (see Proposition \ref{prop:seminormality}).
  
For what concerns configurations of lines with multiplicity conditions, we know that a $m$-fat complete grid of lines in $\mathbb P^n_K$ is a lifting of the above quoted monomial ideal $\Gamma$, and show that it has the same total Betti numbers of a $m$-fat scheme of $n$ general points in $\mathbb P^{n-1}_K$ (Lemma \ref{lemma:generatori fat grid} and Proposition \ref{prop:betti fat grids}). An analogous result holds for $m$-fat complete pseudo-grids of lines, which are  pseudo-liftings of the ideal $\Gamma$.

We highlight that the monomial ideals $\Omega$ and $\Gamma$ are neither stable nor of decreasing type (see \cite{Sharifan}), so we cannot refer to already known results about such types of monomial ideals in order to compute minimal free resolutions and maps by a mapping cone procedure.
Nevertheless, the ideal $\Omega$ is strictly related to Stanley-Reisner ideals of matroids, so that we can identify complete grids and pseudo-grids of lines with particular configurations of lines contained in the larger family of hypersurface configurations, as defined in \cite{GHMN17}. In this paper we focus on lines, so now we only remark that, starting from a larger class of monomial ideals and even from more general liftings, in this family a natural generalization of our configurations of lines to configurations of linear varieties of dimension $k>1$ can be considered too, satisfying the same properties of the $1$-dimensional case. In particular, the description of the minimal free resolution with the maps can be extended to such generalization, taking also into account the results of \cite{Gal16,ELSW18} about the graded Betti numbers of the Stanley-Reisner ideals quoted above.

In this context, several results about the minimal free resolution of a hypersurface configuration are presented with different approaches in \cite[Theorem 3.4]{PS15}, \cite[Remark 2.11]{GHM13}, \cite[Lemma 3.1, Theorem~3.3(3) and Corollary 3.5]{GHMN17}.

The description of the minimal free resolutions for complete grids and pseudo-grids that we provide in our paper uses a mapping cone procedure that is different from those proposed in the papers above.

The paper is organized in the following way. After some preliminaries about minimal free resolutions that are recalled in Section \ref{sec:preliminaries}, we retrace definitions and results about liftings and complete grids in Sections \ref{sec:lifting} and \ref{sec:grids}. In Section \ref{sec:weighted ideals}, the notion of weighted ideal is studied and information are obtained about the Betti numbers of the monomial ideals $\Omega$ and $\Gamma$, and hence of the configurations of lines that we are considering. 
In Section \ref{sec:mapping cone}, we prove our main contribution. 
Finally, in Section \ref{sec:seminormality} we explain the seminormality of complete pseudo-grids of lines.

The examples that we provide have been computed by using CoCoa (see \cite{CoCoA}). 

\section{Some preliminaries and notation}
\label{sec:preliminaries}

For results about the computation of syzygy modules of polynomial modules, that go back to Janet, Schreyer, M\"oller, Mora, we refer to \cite[Sections 23.7 and 23.8]{Mora}, to the references therein, and to \cite[chapters 2 and 3]{KR1}.  
For definitions and results about Hilbert functions we refer to \cite{Ei,KR2}.

Recall that a free resolution of a homogeneous ideal $I\subset R=K[x_0,x_1,\dots,x_n]$ is an exact complex $\dots\rightarrow E_{k} {\buildrel {\delta_n} \over \longrightarrow } \dots {\buildrel {\delta_1} \over \longrightarrow } E_0 {\buildrel {\delta_0} \over \longrightarrow } I \longrightarrow 0$, where the modules $E_i=\oplus R^{\beta_{i,j}}(-j)$ are free and every integer $\beta_{i,j}$ is the cardinality of a set of non-necessarily minimal generators of degree $j$ for the $i$-th module of syzygies of $I$. The integers $\beta_{i,j}$ are the {\em graded Betti numbers of the resolution} and every sum $\beta_i:=\sum_j \beta_{i,j}$ is the $i$-th {\em total Betti number of the resolution}. 
If we ask that the above set of generators is minimal, we have a minimal free resolution, that is unique up to changes of coordinates.

The Hilbert Syzygies Theorem guarantees that the minimal free resolution of a homogeneous ideal $I\subset R$ is a {\em finite} exact complex of type $0\rightarrow F_{n} {\buildrel {\delta_n} \over \longrightarrow } \dots {\buildrel {\delta_1} \over \longrightarrow } F_0 {\buildrel {\delta_0} \over \longrightarrow } I \longrightarrow 0$. If  
$F_h=\oplus R^{\beta_{h,j}^I}(-j)$ for every $h=0,\dots,n$, then the maximal integer $h$ such that $F_h\not=0$ is the {\em projective dimension} of $I$.  In this case, every graded Betti number $\beta_{h,j}^I$ is the number of the minimal generators of degree $j$ of the $h$-th module $\mathrm{Syz}^h(I)$ of syzygies of $I$ and is called {\em graded Betti number of~$I$}. Analogously, every sum $\beta_h^I:=\sum_j \beta_{h,j}^I$ is called the $h$-th {\em total Betti number of $I$}.

If $Y=\mathrm{Proj}(R/I)$ is the closed projective scheme defined by a saturated ideal~$I$, then the total (resp.~graded) Betti numbers of $I$ are also called total (resp.~graded) Betti numbers of~$Y$.

For a brief but very clear introduction to {\em Stanley-Reisner} ideals and {\em matroids} we refer to \cite[Subsection 2.1]{Varbaro}. We only recall that the ideals $I_{d,n}\subseteq K[x_1,\dots,x_n]$ generating by all squarefree terms of degree $d$ in $n$ variables are Stanley-Reisner ideals of matroids and that $K[x_1,\dots,x_n]/I_{d,n}$ is a Cohen-Macaulay ring.

\begin{example}\label{ex:primo}
In the polynomial ring $S:=K[x_1,\dots,x_4]$,  the ideal 
$J:=(x_1x_2^2,x_1x_3^2,x_1x_4^3,$ $x_2^2x_3^2,x_2^2x_4^3,
x_3^2x_4^3)$ is generated by all the possible products between two of the terms in $\{x_1,x_2^2,x_3^2,x_4^3\}$. The minimal free resolution of $J$ is
$$0 \to S(-8)^3 \to S(-5)^2 \oplus S(-6)^4 \oplus S(-7)^2 \to S(-3)^2 \oplus S(-4)^2 \oplus S(-5)^2 \to J \to 0$$
with first syzygies generated by 
\vskip 1mm
$[x_2^2, -x_3^2, 0, 0, 0, 0]$, $[0, x_3^2, -x_4^3, 0, 0, 0]$, $[x_1, 0, 0, -x_3^2, 0, 0]$, $[0, 0, 0, x_3^2, -x_4^3, 0]$, 

$[0, x_1, 0, -x_2^2, 0, 0]$, $[0, 0, 0, x_2^2, 0, -x_4^3]$, $[0, 0, x_1, 0, -x_2^2, 0]$, $[0, 0, 0, 0, x_2^2, -x_3^3]$ 
\vskip 1mm
\noindent and second syzygies generated by 
\vskip 1mm
 $[x_1, x_1, -x_2^2, -x_2^2, 0, 0, x_4^3, 0], [0, x_1, 0, -x_2^2, -x_3^2, 0, x_4^3, 0], [0, 0, 0, x_2^2, 0, -x_3^2, 0, x_4^3]$.
\vskip 1mm
\noindent 
In Section \ref{sec:weighted ideals} we will see how this minimal free resolution can be deduced from that of the ideal $I_{2,4}=(x_1x_2,x_1x_3,x_1x_4,x_2x_3,x_2x_4,x_3x_4)$, that is  
$$0 \to S(-4)^3 \to S(-3)^8 \to S(-2)^6 \to I_{2,4} \to 0.$$
The above minimal free resolutions share the same total Betti numbers and, if we consider $I_{2,4}$ with the grading $\deg(x_1)=1$, $\deg(x_2)=2$, $\deg(x_3)=2$, $\deg(x_4)=3$, then the minimal free resolution of $I_{2,4}$ becomes exactly equal to that of $J$ with the standard grading. So, in this case also the graded Betti numbers are the same, and the minimal free resolutions are related as indicated in \cite[Lemma~3.1]{GHMN17}.
\end{example}

%%%%%%%%%%%%%%%%%%%%%%%%%%%%%%%%%%%%%%%%%%%%%%%%%%%
%% Lifting and pseudo-lifting of monomial ideals %%
%%%%%%%%%%%%%%%%%%%%%%%%%%%%%%%%%%%%%%%%%%%%%%%%%%%

\section{Lifting and pseudo-lifting of monomial ideals}
\label{sec:lifting}

For every integer $n\geq 2$ and $t\in \mathbb N$, we consider the variables $x_1,\dots,x_n, u_1,\dots,u_t$ and the polynomial rings $S=K[x_1,\dots,x_n]$ and $R=K[x_1,\dots,x_n,u_1,\dots,u_t]$, over an infinite field $K$. If $t=1$ then we set $u_1:=x_0$. The following definition generalizes the definition of lifting given in \cite{BuchEis,GGR,Ro} in terms of ideals or of $K$-algebras. 

\begin{definition}\cite[Definition 2.3]{MiNa}
Let $E$ be an $S$-module and $F$ and $R$-module. If $u_1,\dots,u_t$ is a $F$-regular sequence and $F/(u_1,\dots,u_t)F\simeq E$, then $F$ is said a {\em $t$-lifting} of $E$ to $R$. If $t=1$, then $F$ is just called a {\em lifting} of $E$.
\end{definition} 

We now focus on a classic lifting procedure that is called $t$-lifting and on a more general procedure that is called pseudo-$t$-lifting. These procedures have been introduced and studied in \cite{MiNa}, and also investigated in \cite{CMR2005}.

\begin{definition} \cite{MiNa}
Given a positive integer $r$, let $A:=(L_{j,i})_{1\leq j\leq n, 1\leq i\leq r}$ be a matrix of type $n\times r$ whose entries $L_{j,i}$ are linear forms in $K[x_j,u_1,\dots,u_t]\subset R$ in which $x_j$ has a non-null coefficient.
We call the matrix $A$ a {\em $t$-lifting matrix}. 
\end{definition}

Given a monomial ideal $J\subset S$, denote by $B_J:=\{\tau_1,\dots,\tau_s\}$ its minimal monomial basis and consider an integer $r$ that is higher than or equal to any non-null exponent of a variable appearing in a term of $B_J$. Then, we can take a $t$-lifting matrix $A$ of type $n\times r$ and associate to $J$  via $A$ the homogeneous ideal $I:= (\tilde \tau_1,\dots,\tilde \tau_s)\subset R$ where, given any term $\tau=x_1^{\alpha_1}\dots x_n^{\alpha_n}\in S$ with $\alpha_i\leq r$ for every $i$, we set
\begin{equation}\label{eq:termine}
\tilde \tau = \prod_{j=1}^{n}\Bigl({\prod_{i=1}^{\alpha_j}L_{j,i}}\Bigr) \in R.
\end{equation}
By \cite[Corollary 2.10]{MiNa}, the module $F=R/I$ is a $t$-lifting of $S/J$ and we will say that $I$ is a {\em $t$-lifting} of $J$ (via the matrix $A$). Moreover, $S/J$ is a Cohen-Macaulay ring if and only if $R/I$ is a Cohen-Macaulay ring (see \cite[Corollary 2.10(ii)]{MiNa}). 

\begin{remark}\label{rem:conditions on lifting} {\rm (\cite[Section 4]{MiNa} and \cite[Proposition 1.7(c)]{CMR2005})}
The following conditions on a $t$-lifting matrix $A:=(L_{j,i})_{1\leq j\leq n, 1\leq i\leq r}$ can be considered:
\begin{itemize}
\item[($\alpha$)] For every $1\leq i_1,\dots, i_n \leq r$, the vector spaces $\langle L_{1,i_1},\dots,L_{n,i_n}\rangle$ are $n$-dimensional or, equivalently, the polynomials $f_j=\prod_{i=1}^r L_{j,i}$, $1\leq j\leq n$, define a complete intersection $(f_1,\dots,f_n)\subset R$ of height $n$.
\item[($\alpha'$)] For every $1\leq i_1,\dots, i_n \leq r$, the vector spaces $\langle L_{1,i_1},\dots,L_{n,i_n}\rangle$ are $n$-dimensional and pairwise distinct or, equivalently, the polynomials $f_j=\prod_{i=1}^r L_{j,i}$, $1\leq j\leq n$, define a {\em reduced} complete intersection $(f_1,\dots,f_n)\subset R$ of height $n$.
\end{itemize}
A $t$-lifting matrix $A$ always satisfies condition ($\alpha$). So, if $I$ is the $t$-lifting of a monomial ideal $J$ via $A$, then $I$ and $J$ have the same height.
\end{remark}

\begin{example}\label{ex:lifting}
The $1$-lifting matrix
$$(L_{j,i})=\left(\begin{array}{ccc} x_1-x_0 & x_1+x_0 & x_1-2x_0 \\ x_2+x_0 & x_2+x_0 & x_2-2x_0 \\ x_3-3x_0 & x_3+3x_0 & x_3-4x_0 \end{array}\right)$$
determines the $1$-lifting $I=((x_1-x_0)(x_1+x_0)(x_3-3x_0), (x_2+x_0)^2 (x_2-2x_0))\subseteq K[x_0,x_1,x_2,x_3]$ of the monomial ideal $J=(x_1^2x_3,x_2^3)\subseteq K[x_1,x_2,x_3]$. Notice that this matrix does not satisfy condition ($\alpha'$) of Remark \ref{rem:conditions on lifting}.
\end{example}

In order to generalize $t$-liftings,  
in \cite{MiNa} matrices with entries in the whole ring $R$ and satisfying condition ($\alpha$) of Remark \ref{rem:conditions on lifting} are introduced.

\begin{definition} \cite{MiNa}
Given a positive integer $r$, let $A:=(L_{j,i})_{1\leq j\leq n, 1\leq i\leq r}$ be a matrix of type $n\times r$ whose entries $L_{j,i}$ are linear forms in the whole ring $R$ satisfying condition ($\alpha$) of Remark \ref{rem:conditions on lifting}. We call the matrix $A$ a {\em pseudo-$t$-lifting matrix}. 
\end{definition}

As for $t$-liftings, given a monomial ideal $J\subset S$ with minimal monomial basis $B_J:=\{\tau_1,\dots,\tau_s\}$, we can take a pseudo-$t$-lifting matrix $A$ and associate to $J$ via $A$ the homogeneous ideal $(\tilde \tau_1,\dots,\tilde \tau_s)\subset R$ (see \eqref{eq:termine}), which is called a {\em pseudo-$t$-lifting} (or {\em distraction}) of $J$ (via $A$). For any ideal $N$ we denote by $\tilde N$ its pseudo-$1$-lifting via the matrix $A$. 

\begin{remark}\label{rem:confronto}
(i) A $t$-lifting is also a pseudo-$t$-lifting, but a pseudo-$t$-lifting is not necessarily a $t$-lifting (e.g.~\cite[Example 1.5]{CMR2005}). 

(ii) Let $A$ be a pseudo-$t$-lifting matrix and $I$ the ideal obtained via $A$ from a monomial ideal $J$. If $A$ also satisfies condition ($\alpha'$) then $\sqrt{I}=I$ (see Remark~\ref{rem:conditions on lifting})  and if, moreover, $J$ is unmixed, then the projective scheme defined by $I$ is equidimensional (see \cite[Propositions 1.14 and 2.5(3)]{CMR2005}).

(iii) The pseudo-$t$-lifting of a monomial ideal
$J=Q_1\cap\ldots\cap Q_s$ is $I = \bar Q_1 \cap\ldots\cap \bar Q_s$ 
where $\bar Q_i$ is the pseudo-$t$-lifting of $Q_i$ (see \cite[Lemma 2.17(ii)]{MiNa}).
So, since a monomial ideal $J$ is the intersection of ideals of type
$(x_{j_1}^{i_1}, \ldots , x_{j_k}^{i_k})$, a pseudo-$t$-lifting $I$ is the intersection of pseudo-$t$-liftings of complete intersections 
$(x_{j_1}^{i_1},\ldots,x_{j_k}^{i_k})$.
\end{remark}

\begin{example}\label{ex:pseudo lifting}
Consider the following pseudo-$1$-lifting matrix
\begin{equation}\label{eq:pseudo}
(L_{j,i})=\left(\begin{array}{lll} 
x_2-x_0+3x_1 & x_1+x_0 & x_3-2x_1\\
x_2-3x_0     & x_2+x_1+3x_0 & x_2-2x_0 \\ 
x_3-3x_0 & x_3+3x_1+x_2 & x_3-3x_2-x_1.
\end{array}\right)
\end{equation}
We checked that this matrix satisfies condition $(\alpha')$. It determines the pseudo-$1$-lifting $I=((x_2-x_0+3x_1)(x_1+x_0)(x_3-3x_0), (x_2-3x_0)(x_2+x_1+3x_0)(x_2-2x_0))\subseteq K[x_0,x_1,x_2,x_3]$ of the monomial ideal $J=(x_1^2x_3,x_2^3)\subseteq K[x_1,x_2,x_3]$.
\end{example}

The following result is crucial for our purpose (see also \cite[Proposition 2.6]{MiNa}).

\begin{proposition} \label{prop:BiCoRo} \cite[Corollary 2.20]{BiCoRo}
Let $\tilde J$ be a pseudo-$t$-lifting of a monomial ideal $J$. The ideals $J$ and $\tilde J$ have the same graded Betti numbers.
\end{proposition}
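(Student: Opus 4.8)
The plan is to reduce the statement to two classical invariances of graded Betti numbers — invariance under polarization and invariance under base change to a polynomial extension — connected by the substitution that produces $\tilde J$. Write $J\subset S=K[x_1,\dots,x_n]$ with minimal monomial basis $\{\tau_1,\dots,\tau_s\}$ and let $r$ be as in the construction of the pseudo-$t$-lifting. First I would polarize $J$: for each $x_j$ introduce new variables $x_{j,1},\dots,x_{j,r}$ and replace every power $x_j^{e}$ by $x_{j,1}\cdots x_{j,e}$, obtaining a squarefree monomial ideal $J^{\mathrm{pol}}$ in the polynomial ring $P:=K[x_{j,i}\mid 1\le j\le n,\ 1\le i\le r]$. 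It is well known that $J^{\mathrm{pol}}$ and $J$ have the same graded Betti numbers, since the linear forms $x_{j,i}-x_{j,1}$ (for $i\ge 2$) form a $P/J^{\mathrm{pol}}$-regular sequence with quotient $S/J$, and quotienting by a regular sequence of linear forms preserves graded Betti numbers. The key observation is then that the $K$-algebra homomorphism $\psi\colon P\to R$ with $\psi(x_{j,i})=L_{j,i}$ carries the polarization of each $\tau_k$ precisely to $\tilde\tau_k$ as in \eqref{eq:termine}; hence $\tilde J=\psi(J^{\mathrm{pol}})R$.

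Next I would factor $\psi$ as the surjection $P\twoheadrightarrow S'$ onto the $K$-subalgebra $S'\subseteq R$ generated by the entries $L_{j,i}$ — a polynomial ring, since the $L_{j,i}$ span a subspace of $R_1$ — followed by the inclusion $S'\hookrightarrow R$. Completing a basis of that subspace to a basis of $R_1$ exhibits $R$ as a polynomial ring over $S'$, so tensoring a minimal free resolution of $\psi(J^{\mathrm{pol}})S'$ over $S'$ with $R$ gives a minimal free resolution of $\tilde J$, and therefore $\beta^R_{i,k}(\tilde J)=\beta^{S'}_{i,k}\bigl(\psi(J^{\mathrm{pol}})S'\bigr)$ for all $i,k$. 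Combining this with the polarization step, the proposition is reduced to showing that $\ker(P\twoheadrightarrow S')$ — which is generated by the linear forms recording the $K$-linear dependences among the $L_{j,i}$ — is generated by a $P/J^{\mathrm{pol}}$-regular sequence; granting this, quotienting by such a sequence leaves the graded Betti numbers unchanged and the proof is complete.

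I expect this regularity to be the main obstacle, and it is exactly the point at which condition $(\alpha)$ of Remark~\ref{rem:conditions on lifting} is needed. One first checks that every minimal prime of the squarefree ideal $J^{\mathrm{pol}}$ is generated by variables $x_{j,i}$ lying in pairwise distinct rows $j$: the support of each polarized generator meets each row in an initial segment, so no minimal vertex cover of the generators can use two vertices of the same row. Consequently each minimal prime of $J^{\mathrm{pol}}$ is contained in the span of the variables of some transversal $\{x_{1,i_1},\dots,x_{n,i_n}\}$ (one variable per row), and since $(\alpha)$ forces $L_{1,i_1},\dots,L_{n,i_n}$ to be linearly independent, $\psi$ is injective on the span of every such transversal; hence $\ker\psi$ meets each minimal prime of $J^{\mathrm{pol}}$ only in $0$, and, $J^{\mathrm{pol}}$ being radical, every nonzero linear form of $\ker\psi$ is a nonzerodivisor on $P/J^{\mathrm{pol}}$. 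The remaining step — promoting this to regularity of a full generating set of $\ker\psi$ — comes down to ordering these linear forms so as to control the associated primes of the successive hyperplane sections of $P/J^{\mathrm{pol}}$; alternatively one can settle it by reducing to a genuine $t$-lifting $I$, where $u_1,\dots,u_t$ is an $R/I$-regular sequence by construction (see \cite{MiNa}), so that $\beta^R(I)=\beta^S(J)$, and then propagating the conclusion along the irreducible parameter space of matrices satisfying $(\alpha)$. Since every operation above respects the grading, this gives equality of the graded, and not merely of the total, Betti numbers; essentially the same bookkeeping could also be carried out directly on a minimal multigraded free resolution of $J^{\mathrm{pol}}$, distracting it via $\psi$ and verifying acyclicity through the Buchsbaum--Eisenbud criterion — using that $R$ is Cohen--Macaulay, so that grade equals height, and that by $(\alpha)$ distraction does not lower the heights of the ideals of minors.
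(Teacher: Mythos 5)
The paper does not actually prove this proposition: it is quoted verbatim from \cite[Corollary 2.20]{BiCoRo}, so you are reproving a cited theorem. Your outline is the standard route behind that result, and the parts you carry out are correct: the identification $\tilde J=\psi(J^{\mathrm{pol}})R$ via \eqref{eq:termine}, the faithfully flat passage $S'\hookrightarrow R$ through the polynomial subring generated by the $L_{j,i}$, the observation that the minimal primes of the squarefree ideal $J^{\mathrm{pol}}$ use at most one variable per row, and the deduction from condition ($\alpha$) of Remark~\ref{rem:conditions on lifting} that every nonzero linear form of $\ker\psi$ avoids all (associated $=$ minimal) primes of $J^{\mathrm{pol}}$, hence is a nonzerodivisor on $P/J^{\mathrm{pol}}$.

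The genuine gap is the step you defer: upgrading \lq\lq every nonzero element of $\ker\psi$ is a nonzerodivisor\rq\rq\ to \lq\lq a basis of $\ker\psi$ is a regular sequence on $P/J^{\mathrm{pol}}$\rq\rq. This is not a formality; it is the whole content of the theorem, and what you have established does not imply it even for squarefree monomial ideals. For instance, for $(xz,xw,yz,yw)=(x,y)\cap(z,w)\subset K[x,y,z,w]$ every nonzero form in the span of $x+z$ and $y+w$ avoids both minimal primes, yet these two forms are not a regular sequence, since the quotient has depth $1$. In your situation the depth happens to be large enough, but that only guarantees the existence of some regular sequence of the right length, not that your particular forms are one: after the first quotient the ring is no longer a monomial quotient, and its associated primes (possibly embedded) are no longer reachable by the transversal argument. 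Each of your three proposed exits conceals exactly this difficulty: \lq\lq ordering the forms to control the associated primes of the successive hyperplane sections\rq\rq\ is precisely what must be proved (in the literature it is done by showing each successive quotient is again a partial polarization/distraction, so the minimal-prime argument can be reapplied); the semicontinuity route over the parameter space of matrices satisfying ($\alpha$) requires flatness, i.e.\ constancy of the Hilbert function along the family — itself a main theorem of \cite{BiCoRo} that you have not established — and even then semicontinuity yields only one-sided inequalities unless you also degenerate $\tilde J$ back to $J$; and the Buchsbaum--Eisenbud variant needs that $\psi$ does not lower the heights of the ideals of minors of the syzygy matrices of $J^{\mathrm{pol}}$, a claim essentially equivalent to the missing regularity. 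So either carry out the induction on the successive quotients in detail, or do as the paper does and simply invoke \cite[Corollary 2.20]{BiCoRo}.
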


\begin{remark}\label{rem:dimostrazione indipendente}
In \cite[Example 2.7]{MiNa} it is explicitly observed that the syzygies of a $t$-lifting or pseudo-$t$-lifting of a monomial ideal $J$ do not coincide with the liftings of the syzygies of $J$. This is essentially due to the fact that $\widetilde{\tau\tau'}\not= \tilde \tau \cdot \tilde \tau'$, like it is highlighted in \cite{BiCoRo}. In case of $t$-liftings $I$ of a monomial ideal $J$, $J$ turns out to be the initial ideal of $I$ (for example, see \cite[Theorem 3.2]{BCGR}). Thus, we obtain minimal generators of syzygies of $I$ from minimal generators of syzygies of $J$ by a Gr\"obner basis rewriting procedure, because the Betti numbers coincide.
\end{remark}

%%%%%%%%%%%%%%%%%%%%%%%%%%%%%%%%%%%%%%%%%%%%%%%%%%%%%%%%
%% Complete grids of lines and complete pseudo-grids  %%
%%%%%%%%%%%%%%%%%%%%%%%%%%%%%%%%%%%%%%%%%%%%%%%%%%%%%%%%

\section{Complete grids and complete pseudo-grids of lines}
\label{sec:grids}

In this section, we recall the definitions of complete grid and of complete $m$-fat grid of lines, together with their connection with liftings of monomial ideals that has been studied in the paper \cite{GOR2014} inspired by the investigations described in \cite{CMR2005} and by some examples given in \cite{G2009}. 
With the same perspective, we also introduce the notions of complete pseudo-grid and of complete $m$-fat pseudo-grid of lines.

\begin{definition}\label{def:lattice}\cite[Definition 1]{GO2007} Let $A_0:=\{1\}$, $A_1:=\{a_{11},\dots,a_{1 \ell_1}\}$, $\dots$, $A_n:=\{a_{n 1},\dots,a_{n \ell_n}\}$ be subsets of elements of $K$. Then the set $X \subset \mathbb P^n_K$ consisting of the $\ell_1\ell_2\dots\ell_n$ projective points with coordinates in $A_0\times A_1\times \dots \times A_n$ is called a {\em lattice of type} $(\ell_1,\dots,\ell_n)$. If $\ell_1=\dots=\ell_n=r$, the lattice is said {\em cubic of type $r$}.
\end{definition}

Every lattice $X$ of type $(\ell_1,\dots,\ell_n)$ can be {\em completed} to a cubic lattice of type $r$, $\mathcal X:=\{(1,a_{1 i_1},\dots,a_{n i_n}) \ \vert \ i_1,\dots,i_n \in \{1,\dots,r\}\}$, with $r:=\max\{\ell_1,\dots,\ell_n\}$, by adding $X$ suitable points.

Note that when we consider a projective point $P=(a_0,a_1,\dots,a_n)\in \mathbb P^n_K$ we count the coordinates from $0$ to $n$.

\begin{definition}\label{def:complete grid}\cite[Definition 4]{GO2007}
For every $k\in\{1,\dots,n\}$, denote by $p_{\infty,k}$ the projective point $(0,\dots,1,\dots,0)\in \mathbb P^n_K$ with null coordinates, except the $k$-th coordinate which is equal to $1$, and by $r_{p,k}$ the line through a point $p$ and $p_{\infty,k}$. Given a lattice $X$ of type $(\ell_1,\dots,\ell_n)$, the finite set $Y:=\{r_{p,k}: p\in X, 1\leq k\leq n\}$ %of $h=\sum_{i=1}^n \ell_1\dots \hat{\ell_i}\dots\ell_n$ lines 
is called a {\em complete grid of lines of type} $(\ell_1,\dots,\ell_n)$ (with basis the lattice $X$) (for short, {\em complete grid}). 
\end{definition}

\begin{remark}\label{rem:projective closure}
A complete grid of lines $Y\subset \mathbb P^n_K$ is the projective closure of the configuration of all the affine lines through the affine points with coordinates in $A_1\times \dots A_n \subset \mathbb A^n_K$ and parallel to the coordinate axes. Note that the number of lines in a complete grid of type $(\ell_1,\dots,\ell_n)$ is $h=\sum_{i=1}^n \ell_1\dots \hat{\ell_i}\dots\ell_n$.
\end{remark}

\begin{theorem} \label{th:lifting complete grid} 
Let $X$ be a lattice of type $(\ell_1,\dots,\ell_n)$ and $\mathcal X$ a cubic lattice of type~$r$, with $r:=\max\{\ell_1,\dots,\ell_n\}$, containing $X$.
\begin{itemize}
\item[(i)] \cite[Definition 1.1 and Lemma 2.5]{GOR2014} The matrix $\Lambda=(x_j-a_{ji} x_0)_{1\leq j\leq n, 1\leq i\leq r}$ is a $1$-lifting matrix satisfying conditions ($\alpha$) and ($\alpha'$) of Remark \ref{rem:conditions on lifting}.
%\item[(ii)] \cite[Theorem 2.6]{GOR2014} The defining ideal of $X$ is the $1$-lifting via $\Lambda$ of the monomial ideal $\mathfrak b=(x_1^{\ell_1},\dots,x_n^{\ell_n})$.
\item[(ii)] \cite[Theorem 2.6]{GOR2014} The defining ideal $I(Y)$ of the complete grid of lines $Y$ of type $(\ell_1,\dots,\ell_n)$, with basis the lattice $X$, is the $1$-lifting via $\Lambda$ of the monomial ideal $\Omega:=\cap_{k=1}^n (x_1^{\ell_1},\dots,\widehat{x_k^{\ell_k}},\dots,x_n^{\ell_n})$ with minimal set of generators $B_\Omega=\{x_j^{\ell_j} x_k^{\ell_k} \vert 1\leq j<k\leq n\}$.
\end{itemize}
\end{theorem}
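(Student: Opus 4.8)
The plan is to establish (i) by a direct linear-algebra verification and then to deduce (ii) by identifying the zero locus of the lifted ideal and invoking the reducedness that (i) provides.

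For (i): each entry $x_j-a_{ji}x_0$ of $\Lambda$ is a linear form of $K[x_j,x_0]$ in which $x_j$ has coefficient $1\ne 0$, so $\Lambda$ is a $1$-lifting matrix by definition, and the real content is conditions $(\alpha)$ and $(\alpha')$ of Remark~\ref{rem:conditions on lifting}. Fixing indices $1\le i_1,\dots,i_n\le r$, I would write the $n\times(n+1)$ coefficient matrix of $x_1-a_{1i_1}x_0,\dots,x_n-a_{ni_n}x_0$ in the ordered basis $x_0,x_1,\dots,x_n$: it contains the $n\times n$ identity block in the columns indexed by $x_1,\dots,x_n$, hence has rank $n$, so the span is $n$-dimensional and $(\alpha)$ holds. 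For $(\alpha')$, if two such spans coincided for tuples differing in the $j$-th slot, comparing the coefficients of $x_1,\dots,x_n$ in a common element would force $x_j-a_{ji_j}x_0=x_j-a_{ji_j'}x_0$, i.e.\ $a_{ji_j}=a_{ji_j'}$, contradicting the fact that the $r$ elements of $A_j$ appearing in the cubic completion are pairwise distinct.

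For (ii): I would first check by a short count on exponents that a monomial $x_1^{a_1}\cdots x_n^{a_n}$ lies in $\Omega=\bigcap_{k=1}^n Q_k$, with $Q_k:=(x_j^{\ell_j}\mid j\ne k)$, exactly when at least two of the exponents satisfy $a_j\ge\ell_j$; this yields $B_\Omega=\{x_j^{\ell_j}x_k^{\ell_k}\mid 1\le j<k\le n\}$. Next, parametrizing the line $r_{p,k}$ through $p=(1,a_{1i_1},\dots,a_{ni_n})\in X$ and $p_{\infty,k}$ identifies its prime defining ideal as $(x_j-a_{ji_j}x_0\mid j\ne k)$, which is independent of $i_k$. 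Writing $Y_k:=\bigcup_{p\in X}r_{p,k}$, we have $Y=\bigcup_{k=1}^n Y_k$, hence $I(Y)=\bigcap_{k=1}^n I(Y_k)$. By \eqref{eq:termine}, the $1$-lifting of $Q_k$ via $\Lambda$ is the complete intersection $\widetilde{Q_k}=\bigl(\prod_{i=1}^{\ell_j}(x_j-a_{ji}x_0)\mid j\ne k\bigr)$, and since a point annihilates $\prod_{i=1}^{\ell_j}(x_j-a_{ji}x_0)$ iff it lies on one of the hyperplanes $x_j-a_{ji}x_0=0$ with $i\le\ell_j$, one gets $V(\widetilde{Q_k})=Y_k$ as a set. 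Because $\Lambda$ satisfies $(\alpha')$ by (i), Remark~\ref{rem:confronto}(ii) gives $\widetilde{Q_k}=\sqrt{\widetilde{Q_k}}$, so $\widetilde{Q_k}$ equals the intersection of the prime ideals of the irreducible components of $V(\widetilde{Q_k})$, i.e.\ $\widetilde{Q_k}=I(Y_k)$. Finally, Remark~\ref{rem:confronto}(iii) applied to $\Omega=\bigcap_{k=1}^n Q_k$ gives $\widetilde{\Omega}=\bigcap_{k=1}^n\widetilde{Q_k}=\bigcap_{k=1}^n I(Y_k)=I(Y)$, which is the assertion.

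The purely bookkeeping steps are the parametrization of $r_{p,k}$ and the exponent count for $B_\Omega$; the one place where something substantive happens is the upgrade from the set equality $V(\widetilde{Q_k})=Y_k$ to the scheme-theoretic equality $\widetilde{Q_k}=I(Y_k)$ --- that is, the radicality of $\widetilde{Q_k}$ --- and this is exactly where $(\alpha')$ from (i) is indispensable. If one prefers not to invoke Remark~\ref{rem:confronto}(ii) here, the same conclusion follows from a Bézout count: $\widetilde{Q_k}$ is a complete intersection of forms of degrees $\ell_j$ ($j\ne k$), hence $V(\widetilde{Q_k})$ has degree $\prod_{j\ne k}\ell_j$, equal to the number of distinct lines in $Y_k$, which forces the reduced structure.
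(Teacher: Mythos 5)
Your proposal is correct in substance, but note that the paper itself does not prove Theorem~\ref{th:lifting complete grid}: both parts are imported from \cite{GOR2014}, so what you have written is a self-contained alternative rather than a variant of an argument in the text. Part (i) is a complete and routine verification. In part (ii), your route --- compute $B_\Omega$ by the exponent count, identify $I(r_{p,k})=(x_j-a_{ji_j}x_0 : j\neq k)$, commute lifting with the intersection $\Omega=\bigcap_k Q_k$ via Remark~\ref{rem:confronto}(iii), and then identify each $\widetilde{Q_k}$ with $I(Y_k)$ --- closely parallels the computation the paper performs later in Proposition~\ref{prop:symbolic power}, which obtains the decomposition of the lifted ideal into the line ideals by citing \cite[Remark 1.11 and Proposition 1.14]{CMR2005} together with condition $(\alpha')$; your version makes that mechanism explicit instead of quoting it. The one step you should phrase more carefully is the upgrade from the pointwise equality $V(\widetilde{Q_k})=Y_k$ to $\widetilde{Q_k}=I(Y_k)$: since $K$ is only assumed infinite, ``a radical ideal equals the ideal of the components of its zero set'' is a Nullstellensatz-type statement you cannot invoke literally. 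The repair is immediate and purely algebraic: any prime containing $\prod_{i\le \ell_j}(x_j-a_{ji}x_0)$ contains some factor $x_j-a_{ji_j}x_0$, and each $(x_j-a_{ji_j}x_0 : j\neq k)$ is a prime of height $n-1$, equal to the height of the complete intersection $\widetilde{Q_k}$; hence the minimal primes of $\widetilde{Q_k}$ are exactly the line ideals (all of them, and only them), so $\sqrt{\widetilde{Q_k}}=\bigcap_p I(r_{p,k})=I(Y_k)$, and the radicality supplied by Remark~\ref{rem:confronto}(ii) finishes. Your alternative B\'ezout count is also viable, but it needs the same identification of the minimal primes plus unmixedness of the complete intersection (Cohen--Macaulayness) to exclude extra or embedded components, so it is not really lighter than the radicality argument.
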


\begin{example}\label{ex:grid}
With $n=3$ and $(\ell_1,\ell_2,\ell_3)=(2,3,1)$, the ideal $\Omega$ is generated by the terms $x_1^2x_2^3,x_1^2x_3,x_2^3x_3$. The $1$-lifting matrix satisfying condition $(\alpha')$
$$(L_{j,i})=\left(\begin{array}{ccc} x_1-x_0 & x_1+x_0 & x_1-2x_0 \\ x_2-3x_0 & x_2+3x_0 & x_2-2x_0 \\ x_3-3x_0 & x_3+3x_0 & x_3-4x_0 \end{array}\right)$$
determines the complete grid $Y\subset \mathbb P^3_K$ of type $(2,3,1)$ with defining ideal
$$\begin{array}{lcl}
I(Y)&=& ((x_1-x_0)(x_1+x_0)(x_2-3x_0)(x_2+3x_0)(x_2-2x_0),\\ &&(x_1-x_0)(x_1+x_0)(x_3-3x_0),(x_2-3x_0)(x_2+3x_0)(x_2-2x_0)(x_3-3x_0))\\
&=&  \cap_{i=1}^{2}(L_{1,i},L_{2,1})\bigcap \cap_{i=1}^{2}(L_{1,i},L_{2,2}) \bigcap \cap_{i=1}^{2}(L_{1,i},L_{2,3})\bigcap\cap_{i=1}^{2}(L_{1,i},L_{3,1}) \\
&& \bigcap \cap_{i=1}^{3}(L_{2,i},L_{3,1}).  
\end{array}$$
This complete grid $Y$ consists of $11$ lines over a basis lattice $X$ of $6$ points.
\end{example}

Recall that, given $s$ points $p_1,\dots,p_s$ in a projective space over $K$ and $s$ positive integers $m_1,\dots,m_s$, the closed projective scheme defined by the saturated ideal $\cap_{i=1}^s I(p_i)^{m_i}$ is said a {\em fat point scheme}, and is said a {\em $m$-fat point scheme} if $m=m_1=\dots =m_s$. The following definition provides the analogous notion for complete grids of lines. 

\begin{definition}\label{def:fat complete grid} \cite[Definition 2]{G2014}
Let $Y$ be a complete grid of lines of type $(\ell_1,\dots,\ell_n)$ and $\mathcal P_1,\dots,\mathcal P_h$ the prime ideals defining its $h=\sum_{i=1}^n \ell_1\dots \hat{\ell_i}\dots\ell_n$ lines. 
Given a positive integer $m$, the projective scheme $\mathbb Y$ defined by the saturated ideal $I(\mathbb Y)=\mathcal P_1^{m}\cap \dots \cap\mathcal P_h^{m}$ is called a {\em $m$-fat complete grid of lines} (for short, {\em $m$-fat complete grid}).
\end{definition}

\begin{theorem} \label{th:lifting fat grid} \cite[Theorem 3.5]{GOR2014}
Let $\mathbb Y$ be a $m$-fat complete grid of type $(\ell_1,\dots,\ell_n)$. Consider the linear forms $L_{j,i}=x_j-a_{ji} x_0$ as in Theorem \ref{th:lifting complete grid} and the $1$-lifting matrix $\mathbb A$ whose $j$-th row begins with the sequence $L_{j,1},\dots,L_{j,\ell_j}$ repeated $m$ times, $j\in\{1,\dots,n\}$. Then, the defining ideal $I(\mathbb Y)$ of $\mathbb Y$ is the $1$-lifting of the monomial ideal $\Gamma:=\cap_{k=1}^n (x_1^{\ell_1},\dots,\widehat{x_k^{\ell_k}},\dots,x_n^{\ell_n})^m$ via the matrix $\mathbb A$.  
\end{theorem}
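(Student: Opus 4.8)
The plan is to reduce Theorem~\ref{th:lifting fat grid} to the already-established Theorem~\ref{th:lifting complete grid}(ii) together with the symbolic-power characterization, by checking directly that the displayed matrix $\mathbb A$ is indeed a $1$-lifting matrix and that the $1$-lifting of $\Gamma$ via $\mathbb A$ coincides with $I(\mathbb Y)=\mathcal P_1^m\cap\cdots\cap\mathcal P_h^m$. First I would verify that $\mathbb A$ is a legitimate $1$-lifting matrix: each of its first $m\ell_j$ columns in row $j$ is one of the linear forms $L_{j,i}=x_j-a_{ji}x_0\in K[x_j,x_0]$, in which $x_j$ has coefficient $1\neq 0$, and any further columns (needed to reach a common width $r\geq m\cdot\max\{\ell_j\}$, or whatever uniform $r$ one fixes) can be filled with, say, $x_j-a_{j1}x_0$ again; by Remark~\ref{rem:conditions on lifting} condition $(\alpha)$ holds automatically, so $\mathbb A$ is a $1$-lifting matrix. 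One should also remark that $\mathbb A$ inherits condition $(\alpha')$ from $\Lambda$ exactly when the original $a_{ji}$ are chosen so that the $L_{j,i}$ for fixed $j$ are pairwise distinct (they are, by definition of the lattice), since any two rows still span a $2$-dimensional space and distinctness of the spaces $\langle L_{1,i_1},L_{2,i_2}\rangle$ is unaffected by repetition within a row — though strictly speaking this is not needed for the statement as phrased.

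Next I would identify the monomial ideal. By Theorem~\ref{th:lifting complete grid}(ii), $\Omega=\cap_{k=1}^n(x_1^{\ell_1},\dots,\widehat{x_k^{\ell_k}},\dots,x_n^{\ell_n})$, and one knows (this is the content of the paper's Proposition~\ref{prop:symbolic power}, which may be invoked, or can be seen directly since $\Omega$ is the Stanley--Reisner-type ideal of a uniform matroid and its primary components are the complete intersections $(x_1^{\ell_1},\dots,\widehat{x_k^{\ell_k}},\dots,x_n^{\ell_n})$) that the $m$-th symbolic power $\Omega^{(m)}$ equals $\Gamma=\cap_{k=1}^n(x_1^{\ell_1},\dots,\widehat{x_k^{\ell_k}},\dots,x_n^{\ell_n})^m$. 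So it suffices to show that the $1$-lifting of $\Gamma$ via $\mathbb A$ is $\mathcal P_1^m\cap\cdots\cap\mathcal P_h^m$. Using Remark~\ref{rem:confronto}(iii) (the pseudo-lifting, hence $1$-lifting, of an intersection is the intersection of the liftings of the components), the $1$-lifting of $\Gamma$ via $\mathbb A$ equals $\bigcap_{k=1}^n\overline{(x_1^{\ell_1},\dots,\widehat{x_k^{\ell_k}},\dots,x_n^{\ell_n})^m}$, where each bar denotes the $1$-lifting via $\mathbb A$. Then I would compute the lifting of a single power $(x_1^{\ell_1},\dots,\widehat{x_k^{\ell_k}},\dots,x_n^{\ell_n})^m$: its minimal generators are monomials $\prod_{j\neq k}x_j^{b_j}$ with $\sum_{j\neq k}b_j=m$ and $0\le b_j\le m\ell_j$ (more precisely the minimal ones among such), and applying formula~\eqref{eq:termine} with the matrix $\mathbb A$ — whose $j$-th row lists $L_{j,1},\dots,L_{j,\ell_j}$ repeated $m$ times — one sees that $\widetilde{x_j^{\ell_j}}$ becomes $\prod_{i=1}^{\ell_j}L_{j,i}$, i.e. exactly the generator of the prime ideal of one grid line raised implicitly to the needed multiplicity, and the lifted power ideal is $\bigcap\mathcal P^m$ over the appropriate subset of the $h$ lines. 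Intersecting over $k$ then recovers $\mathcal P_1^m\cap\cdots\cap\mathcal P_h^m$, since the $h=\sum_i\ell_1\cdots\widehat{\ell_i}\cdots\ell_n$ primes $\mathcal P_t$ are precisely the $\overline{(L_{1,i_1},\dots,\widehat{(L_{k,i_k})},\dots,L_{n,i_n})}$ ranging over all choices, matching Example~\ref{ex:grid}.

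The main obstacle I anticipate is the bookkeeping in the last step: carefully matching the combinatorics of the minimal monomial generators of the power $(x_1^{\ell_1},\dots,\widehat{x_k^{\ell_k}},\dots,x_n^{\ell_n})^m$ with their images under the lifting~\eqref{eq:termine} via $\mathbb A$, and checking that the primary decomposition of the lifted ideal $\mathcal P_1^m\cap\cdots\cap\mathcal P_h^m$ is the right one — in particular that no embedded components are introduced and that the $\mathcal P_t^m$ genuinely appear rather than some larger $\mathcal P_t$-primary ideals. This is where one must use that $R/I(Y)$ is Cohen--Macaulay (from Theorem~\ref{th:lifting complete grid} and \cite[Corollary 2.10(ii)]{MiNa}) so that $I(Y)$ is unmixed, hence $I(\mathbb Y)=I(Y)^{(m)}=\mathcal P_1^m\cap\cdots\cap\mathcal P_h^m$ is saturated, together with Remark~\ref{rem:confronto}(ii)(iii) to control the primary decomposition of the lifted ideal, and the fact that lifting commutes with these intersections. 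Everything else is a direct substitution into~\eqref{eq:termine} and an appeal to the cited structural results.
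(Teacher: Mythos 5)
Your overall strategy is the right one, and since the paper itself gives no proof of Theorem~\ref{th:lifting fat grid} (it is quoted from \cite{GOR2014}), your sketch is in effect a reconstruction: commute the lifting with the intersection defining $\Gamma$ via Remark~\ref{rem:confronto}(iii), use the repetition in the rows of $\mathbb A$ to see from \eqref{eq:termine} that the lift of $x_j^{\ell_j b_j}$ is $f_j^{b_j}$ with $f_j=\prod_{i=1}^{\ell_j}L_{j,i}$, conclude that the lift of $Q_k^m$, $Q_k:=(x_1^{\ell_1},\dots,\widehat{x_k^{\ell_k}},\dots,x_n^{\ell_n})$, is $(f_1,\dots,\widehat{f_k},\dots,f_n)^m$, and finally identify the latter with the intersection of the $m$-th powers of the primes of the lines through $p_{\infty,k}$. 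Three inaccuracies along the way should be repaired: the minimal generators of $Q_k^m$ are $\prod_{j\neq k}x_j^{\ell_j b_j}$ with $\sum_{j\neq k}b_j=m$, not $\prod_{j\neq k}x_j^{b_j}$; $\widetilde{x_j^{\ell_j}}=f_j$ is a product of $\ell_j$ hyperplane forms, not ``the generator of the prime ideal of one grid line'' (each line prime is generated by $n-1$ linear forms, one chosen from each row but one); and $\mathbb A$ does \emph{not} inherit condition~($\alpha'$), since distinct column choices in a repeated row give equal spaces --- only ($\alpha$) holds for $\mathbb A$, while what you really use later is ($\alpha'$) for the original matrix $\Lambda$, i.e.\ distinctness of the $a_{ji}$ within each row.

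The genuine gap is the last step, which you flag as ``bookkeeping'' but propose to close with tools that do not do the job. Cohen--Macaulayness of $R/I(Y)$, unmixedness of $I(Y)$, and Remark~\ref{rem:confronto}(ii)--(iii) concern the reduced grid and liftings of monomial ideals; they say nothing about the primary decomposition of the power $J_k^m$, $J_k:=(f_j : j\neq k)$, which is where embedded components or larger $\mathcal P$-primary components could a priori appear. What is actually needed is: (a) $J_k$ is a complete intersection by condition ($\alpha$), and powers of ideals generated by regular sequences have no embedded primes (equivalently $J_k^m=J_k^{(m)}$), so $J_k^m=\bigcap_{\mathfrak p\in\mathrm{Min}(J_k)}\bigl(J_k^mR_{\mathfrak p}\cap R\bigr)$; (b) by ($\alpha'$) for $\Lambda$ the minimal primes of $J_k$ are exactly the line primes $\mathfrak p=(L_{1,i_1},\dots,\widehat{L_{k,i_k}},\dots,L_{n,i_n})$, and in $R_{\mathfrak p}$ each $f_j$ ($j\neq k$) equals $L_{j,i_j}$ times a unit (this uses $a_{ji'}\neq a_{ji_j}$ for $i'\neq i_j$), whence $J_k^mR_{\mathfrak p}\cap R=\mathfrak p^mR_{\mathfrak p}\cap R=\mathfrak p^{(m)}=\mathfrak p^m$, the last equality by \cite[Lemma 5, Appendix 6]{ZarSam2} since $\mathfrak p$ is generated by a regular sequence of linear forms. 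This is precisely the localization chain displayed in the proof of Proposition~\ref{prop:symbolic power}; you may reuse that computation directly, but you cannot invoke the proposition itself for this step, since its proof in the paper appeals to Theorem~\ref{th:lifting fat grid}. With (a) and (b) in place, intersecting over $k$ gives $\mathcal P_1^m\cap\dots\cap\mathcal P_h^m=I(\mathbb Y)$ by Definition~\ref{def:fat complete grid}, and the proof closes; note also that the identification $\Gamma=\Omega^{(m)}$ is never needed.
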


\begin{example}\label{ex:fatgrid}
Let $Y\subseteq \mathbb P^3_K$ be the complete grid of type $(2,3,1)$ of Example~\ref{ex:grid}. Then, the $3$-fat complete grid $\mathbb Y$ supported over $Y$ is the lifting of the ideal $\Gamma$ determined by the matrix
$$(L_{j,i})=\left(\begin{array}{cccccc} x_1-x_0 & x_1+x_0 & x_1-x_0 & x_1+x_0 & x_1-2x_0 &x_1-2x_0 \\ x_2-3x_0 & x_2+3x_0 & x_2-2x_0& x_2-3x_0 & x_2+3x_0 & x_2-2x_0 \\ x_3-3x_0 & x_3+3x_0 & x_3-4x_0 &x_3-3x_0 & x_3+3x_0 & x_3-4x_0 \end{array}\right)$$
and the defining ideal of $\mathbb Y$ is
$$\begin{array}{lcl}I(\mathbb Y)&=& \cap_{i=1}^{2}(L_{1,i},L_{2,1})^3\bigcap \cap_{i=1}^{2}(L_{1,i},L_{2,2})^3 \bigcap \cap_{i=1}^{2}(L_{1,i},L_{2,3})^3  \\ && \bigcap\cap_{i=1}^{2}(L_{1,i},L_{3,1})^3 \bigcap \cap_{i=1}^{3}(L_{2,i},L_{3,1})^3.\end{array}$$
\end{example}

\begin{proposition}\label{prop:acm e risoluzioni}
Let $Y$ be complete grid and $\mathbb Y$ a $m$-fat complete grid. 
\begin{itemize} 
\item[(i)] The graded Betti numbers of $Y$ (resp.~of $\mathbb Y$) coincide with the graded Betti numbers of the ideal $\Omega$ of Theorem \ref{th:lifting complete grid} (resp.~of the ideal $\Gamma$ of Theorem~\ref{th:lifting fat grid}).
\item[(ii)] $Y$ and $\mathbb Y$ are arithmetically Cohen-Macaulay curves. 
\end{itemize}
\end{proposition}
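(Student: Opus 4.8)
The plan is to deduce both parts directly from the lifting machinery recalled in Sections~\ref{sec:lifting}--\ref{sec:grids}, adding only an elementary dimension count for the monomial ideals $\Omega$ and $\Gamma$.

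For \emph{(i)} I would reason as follows. By Theorem~\ref{th:lifting complete grid}(ii) the saturated ideal $I(Y)$ is a $1$-lifting of $\Omega$, hence in particular a pseudo-$1$-lifting of $\Omega$ by Remark~\ref{rem:confronto}(i). Proposition~\ref{prop:BiCoRo} then yields that $\Omega$ and $I(Y)$ have the same graded Betti numbers; since $I(Y)$ is saturated, these are by definition the graded Betti numbers of the scheme $Y$. The identical chain of implications, with Theorem~\ref{th:lifting fat grid} replacing Theorem~\ref{th:lifting complete grid}(ii), shows that $\Gamma$ and the (saturated) ideal $I(\mathbb Y)$, hence the $m$-fat complete grid $\mathbb Y$, have the same graded Betti numbers. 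This part is just bookkeeping with the quoted results.

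For \emph{(ii)}, that $Y$ and $\mathbb Y$ are curves is immediate: both are supported on the finitely many lines of the grid, so their defining ideals have height $n-1$ in $R$ and the schemes are one-dimensional. For the arithmetically Cohen-Macaulay property I would again pass to the monomial side: since $R/I(Y)$ and $R/I(\mathbb Y)$ are $1$-liftings of $S/\Omega$ and $S/\Gamma$ respectively, by \cite[Corollary~2.10(ii)]{MiNa} (recalled in Section~\ref{sec:lifting}) it suffices to check that $S/\Omega$ and $S/\Gamma$ are Cohen-Macaulay. I would do this via the explicit decompositions $\Omega=\cap_{k=1}^{n}Q_k$ and $\Gamma=\cap_{k=1}^{n}Q_k^{\,m}$, with $Q_k=(x_1^{\ell_1},\dots,\widehat{x_k^{\ell_k}},\dots,x_n^{\ell_n})$: each $Q_k$ is primary to the height-$(n-1)$ prime $P_k=(x_i:i\ne k)$, and so is each power $Q_k^{\,m}$, because $x_i^{\,m\ell_i}\in Q_k^{\,m}$ for every $i\ne k$ while $x_k$ is a non-zerodivisor modulo $Q_k^{\,m}$. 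Hence $\Omega$ and $\Gamma$ are unmixed of height $n-1$, so $\dim S/\Omega=\dim S/\Gamma=1$ and the graded maximal ideal $\gm$ is not associated to either quotient; therefore $\mathrm{depth}\,S/\Omega\ge 1$ and $\mathrm{depth}\,S/\Gamma\ge 1$, which for a one-dimensional ring means Cohen-Macaulay.

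The only point requiring a little care is the verification that the powers $Q_k^{\,m}$ remain $P_k$-primary, i.e.\ that passing to the $m$-fat scheme introduces no embedded components; everything else follows from the cited statements. Alternatively, one could derive \emph{(ii)} from \emph{(i)} by observing that the minimal free resolution of $\Omega$ (and of $\Gamma$) has length $n-1$ and then invoking the Auslander--Buchsbaum formula, but the primary-decomposition argument is the most self-contained.
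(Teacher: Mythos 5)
Your proof is correct and takes essentially the same route as the paper: part (i) is exactly the paper's chain Theorem~\ref{th:lifting complete grid} (resp.\ Theorem~\ref{th:lifting fat grid}), Remark~\ref{rem:confronto}(i) and Proposition~\ref{prop:BiCoRo}, and part (ii) is the same reduction to the monomial side via \cite[Corollary 2.10(ii)]{MiNa}. The only difference is that you make explicit the Cohen--Macaulayness of $S/\Omega$ and $S/\Gamma$ (each $Q_k^m$ remains primary to the height-$(n-1)$ prime $P_k$, so the intersections are unmixed of dimension one with positive depth), a verification the paper leaves implicit, and your argument for it is sound.
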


\begin{proof}
Item {\em (i)} follows from Theorem \ref{th:lifting complete grid} (respectively Theorem \ref{th:lifting fat grid}), Remark \ref{rem:confronto}(i) and Proposition \ref{prop:BiCoRo}. Item {\em (ii)} follows from \cite[Corollary 2.10]{MiNa} and Theorems~\ref{th:lifting complete grid} and \ref{th:lifting fat grid}. 
\end{proof}

Recall that we are considering the ideals $\Omega=\cap_{k=1}^n (x_1^{\ell_1},\dots,\widehat{x_k^{\ell_k}},\dots,x_n^{\ell_n})$ and $\Gamma=\cap_{k=1}^n (x_1^{\ell_1},\dots,\widehat{x_k^{\ell_k}},\dots,x_n^{\ell_n})^m$. Note that, as one can easily see, $\Gamma$ is the $m$-th symbolic power of $\Omega$.
We now study pseudo-liftings of $\Omega$ and $\Gamma$.

\begin{definition}\label{def:pseudo}
Let $A=(L_{j,i})_{1\leq j\leq n, 1\leq i\leq r}$ be a pseudo-$1$-lifting matrix satisfying condition~($\alpha'$) of Remark \ref{rem:conditions on lifting} and $\mathbb A$ be the pseudo-$1$-lifting matrix whose $j$-th row begins with the sequence $L_{j,1},\dots,L_{j,\ell_j}$ repeated $m$ times, $j\in\{1,\dots,n\}$.

(1) A {\em complete pseudo-grid} of type $(\ell_1,\dots,\ell_n)$ is the projective scheme defined by the pseudo-$1$-lifting of $\Omega$ via $A$.

(2)  A {\em $m$-fat complete pseudo-grid} of type $(\ell_1,\dots,\ell_n)$ is the projective scheme defined by the pseudo-$1$-lifting of $\Gamma$ via $\mathbb A$.
\end{definition}

\begin{proposition}\label{prop:properties pseudo}
Let $Y^p$ be a complete pseudo-grid and $\mathbb Y^p$ a $m$-fat complete pseudo-grid, as in Definition \ref{def:pseudo}.
\begin{itemize} 
\item[(i)] $Y^p$ is a reduced configuration of projective lines. 
\item[(ii)] The graded Betti numbers of $Y^p$ (respectively of $\mathbb Y^p$) coincide with the graded Betti numbers of the ideal $\Omega$ (respectively of the ideal $\Gamma$).
\item[(iii)] $Y^p$ and $\mathbb Y^p$ are arithmetically Cohen-Macaulay curves. 
\end{itemize}
\end{proposition}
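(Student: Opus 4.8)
The plan is to prove the three parts in the order (iii), (i), (ii): the Cohen--Macaulayness obtained in (iii) is what makes the pseudo-$1$-liftings in Definition~\ref{def:pseudo} saturated, which is needed before one can talk about the graded Betti numbers of the \emph{schemes} $Y^p$ and $\mathbb Y^p$ in (ii). Throughout, denote by $\widetilde\Omega\subset R$ (resp.\ $\widetilde\Gamma\subset R$) the pseudo-$1$-lifting of $\Omega$ via $A$ (resp.\ of $\Gamma$ via $\mathbb A$), so that $Y^p=\mathrm{Proj}(R/\widetilde\Omega)$ and $\mathbb Y^p=\mathrm{Proj}(R/\widetilde\Gamma)$. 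The two facts I would lean on are Proposition~\ref{prop:BiCoRo} (a pseudo-$1$-lifting has the same graded Betti numbers as the monomial ideal it comes from) and Remark~\ref{rem:confronto}, parts (ii) and (iii), together with the standing hypothesis that $A$ and $\mathbb A$ satisfy condition~$(\alpha')$, hence also $(\alpha)$, of Remark~\ref{rem:conditions on lifting}.

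For (iii) I would transport Cohen--Macaulayness from the classical grids. The complete grid $Y$ and the $m$-fat complete grid $\mathbb Y$ are arithmetically Cohen--Macaulay by Proposition~\ref{prop:acm e risoluzioni}(ii); moreover, by Proposition~\ref{prop:acm e risoluzioni}(i) and Proposition~\ref{prop:BiCoRo}, the ideals $I(Y)$ and $\widetilde\Omega$ (resp.\ $I(\mathbb Y)$ and $\widetilde\Gamma$), all sitting inside the same ring $R$, have the same graded Betti numbers, hence the same projective dimension and the same Hilbert function. Therefore $R/I(Y)$ and $R/\widetilde\Omega$ have equal depth (by the Auslander--Buchsbaum formula) and equal Krull dimension, and since $R/I(Y)$ is Cohen--Macaulay so is $R/\widetilde\Omega$; the same argument gives that $R/\widetilde\Gamma$ is Cohen--Macaulay. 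These common Krull dimensions equal $2$, so $Y^p$ and $\mathbb Y^p$ are arithmetically Cohen--Macaulay curves; in particular $\operatorname{depth}(R/\widetilde\Omega)=\operatorname{depth}(R/\widetilde\Gamma)=2\geq 1$, which forces $\widetilde\Omega$ and $\widetilde\Gamma$ to be saturated, so they are the defining ideals of the schemes $Y^p$ and $\mathbb Y^p$.

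For (i), reducedness of $Y^p$ is exactly the conclusion $\sqrt{\widetilde\Omega}=\widetilde\Omega$ of Remark~\ref{rem:confronto}(ii), which holds because $A$ satisfies $(\alpha')$. To see that $Y^p$ is a configuration of \emph{lines}, I would use Remark~\ref{rem:confronto}(iii): writing $\Omega=\bigcap_{k=1}^n\mathfrak q_k$ with $\mathfrak q_k=(x_1^{\ell_1},\dots,\widehat{x_k^{\ell_k}},\dots,x_n^{\ell_n})$, the pseudo-$1$-lifting $\widetilde{\mathfrak q_k}$ is generated by the $n-1$ products $\prod_{i=1}^{\ell_j}L_{j,i}$ with $j\neq k$, so its zero locus is the union, over all choices $i_j\in\{1,\dots,\ell_j\}$ for $j\neq k$, of the linear subspaces $\bigcap_{j\neq k}V(L_{j,i_j})$. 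Condition~$(\alpha)$ says that any $n$ linear forms, one from each row, are linearly independent, hence so are the $n-1$ forms $L_{j,i_j}$, $j\neq k$, and each such $\bigcap_{j\neq k}V(L_{j,i_j})$ is a line of $\mathbb P^n_K$. Thus $V(\widetilde\Omega)=\bigcup_{k}V(\widetilde{\mathfrak q_k})$ is a union of lines; by the reducedness just proved, $Y^p$ is a reduced such union, and since it has the same Hilbert function as the complete grid $Y$ it consists of exactly $h=\sum_{k=1}^n\prod_{j\neq k}\ell_j$ lines, in agreement with Remark~\ref{rem:projective closure}.

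Finally, (ii) is then immediate: by the convention recalled in Section~\ref{sec:preliminaries} and the saturatedness obtained in the proof of (iii), the graded Betti numbers of $Y^p$ and $\mathbb Y^p$ are those of $\widetilde\Omega$ and $\widetilde\Gamma$, and these coincide with the graded Betti numbers of $\Omega$ and $\Gamma$ by Proposition~\ref{prop:BiCoRo}. I expect the one genuinely delicate point to be the homological bookkeeping in step (iii) — making sure that passing to a pseudo-$1$-lifting preserves projective dimension and Hilbert function simultaneously, so that Cohen--Macaulayness really does transfer from the classical grids $Y$, $\mathbb Y$ to $Y^p$, $\mathbb Y^p$; the rest reduces to citing Proposition~\ref{prop:BiCoRo} and Remark~\ref{rem:confronto}.
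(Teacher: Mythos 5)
Your proof is correct and uses essentially the same ingredients as the paper: reducedness of $Y^p$ via Remark~\ref{rem:confronto}(ii) and condition $(\alpha')$, Proposition~\ref{prop:BiCoRo} for the Betti numbers, and the Auslander--Buchsbaum formula for Cohen--Macaulayness. The only (harmless) difference is that for (iii) you transfer Cohen--Macaulayness from the classical grids $Y$, $\mathbb Y$ via equality of graded Betti numbers and Hilbert functions, whereas the paper deduces it directly from item (ii), i.e.\ from the Betti numbers of $\Omega$ and $\Gamma$; your added remarks on saturation and on why the components are lines are details the paper leaves implicit.
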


\begin{proof}
For item {\em (i)} recall that $\Omega$ is unmixed and the pseudo-$1$-lifting matrix $A$ satisfies condition $(\alpha'$) (see Remark \ref{rem:confronto}(ii)). Item {\em (ii)} follows from Proposition \ref{prop:BiCoRo}. Item {\em (iii)} holds thanks to item {\em (ii)} and the Auslander-Buchsbaum Formula in the graded case (see \cite[Theorem 4.4.15]{Weibel}). 
\end{proof}

\begin{example} \label{ex:2.fat grids}
Let $\mathbb Y$ be the $3$-fat complete grid of lines of type $(2,3,1)$ of Example~\ref{ex:fatgrid} and $\mathbb Y^p$ be the $3$-fat complete pseudo-grid of lines of type $(2,3,1)$ supported on the complete pseudo-grid determined by the matrix of Example \ref{ex:pseudo lifting}. Both the resolutions of $\mathbb Y$ and $\mathbb Y^p$ have the same graded Betti numbers as the resolution of $\Gamma=(x_1^2,x_2^3)^3\cap (x_1^2,x_3)^3 \cap (x_2^3,x_3)^3$, that is
$0 \to R(-12)^3 \oplus R(-14) \oplus R(-16) \to R(-9)^2 \oplus R(-10) \oplus R(-11) \oplus R(-12) \oplus R(-15)$.
\end{example}

We are denoting by $A=(L_{j,i})$ the $1$-lifting (resp.~$1$-pseudo-lifting) matrix determining the defining ideal $I$ of a complete grid (resp.~pseudo-grid) of lines of type $(\ell_1,\dots,\ell_n)$. Moreover, we are denoting by $\mathbb A$ the $1$-lifting (resp.~pseudo-$1$-lifting) matrix whose $j$-th row begins with the sequence $L_{j,1},\dots,L_{j,\ell_j}$ repeated $m$ times, $j\in\{1,\dots,n\}$. The matrix $\mathbb A$ determines the defining ideal $\mathbf I$ of a $m$-fat complete grid (resp.~pseudo-grid) of lines of type $(\ell_1,\dots,\ell_n)$. 

The following result recalls that $\mathbf I$ is the $m$-th symbolic power of $I$ (see also \cite[Theorem~3.6(1)]{GHMN17}). In general one has $I^{(m)}\not=I^m$.

\begin{proposition}\label{prop:symbolic power}
With the above notation, $\mathbf I$ coincides with the $m$-th symbolic power $I^{(m)}$ of $I$.
\end{proposition}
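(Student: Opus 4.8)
The plan is to reduce the statement to the monomial case via pseudo-liftings and then exploit the known decomposition of a monomial ideal into its primary components. First I would recall from Remark~\ref{rem:confronto}(iii) that if $J = Q_1\cap\cdots\cap Q_s$ is the irredundant primary decomposition of a monomial ideal into ideals $Q_i$ of the form $(x_{j_1}^{i_1},\dots,x_{j_k}^{i_k})$, then a pseudo-$t$-lifting of $J$ is the intersection of the pseudo-$t$-liftings of the $Q_i$. Applied to $\Omega = \cap_{k=1}^n (x_1^{\ell_1},\dots,\widehat{x_k^{\ell_k}},\dots,x_n^{\ell_n})$ and its lifting matrix $A$, this gives that $I$ is the intersection of the prime ideals $\mathcal P_{k,i_1,\dots,\widehat{k},\dots,i_n} = \bigl(L_{1,i_1},\dots,\widehat{L_{k,\cdot}},\dots,L_{n,i_n}\bigr)$, i.e.\ exactly the prime ideals $\mathcal P_1,\dots,\mathcal P_h$ defining the $h$ lines of the (pseudo-)grid, each appearing once.

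Next I would apply the same principle to $\Gamma = \cap_{k=1}^n (x_1^{\ell_1},\dots,\widehat{x_k^{\ell_k}},\dots,x_n^{\ell_n})^m$, using the lifting matrix $\mathbb A$. The key observation is that the $\mathbb A$-pseudo-lifting of a power $(x_{j_1}^{\ell_{j_1}},\dots,x_{j_k}^{\ell_{j_k}})^m$ is the $m$-th \emph{ordinary} power of the corresponding prime ideal $\mathcal P$ obtained from $A$: since $(x_{j_1}^{\ell_{j_1}},\dots,x_{j_k}^{\ell_{j_k}})^m$ is generated by the monomials $\prod_t x_{j_t}^{b_t \ell_{j_t}}$ with $\sum_t b_t = m$, and since the $\mathbb A$-distraction replaces each block $x_j^{\ell_j}$ by the product $L_{j,1}\cdots L_{j,\ell_j}$ (repeated up to $m$ times along the row of $\mathbb A$), the associated term $\widetilde{\;\cdot\;}$ from \eqref{eq:termine} is precisely a degree-$m$ monomial in the generators $L_{j,1}\cdots L_{j,\ell_j}$ of $\mathcal P$; as $b$ ranges over all compositions of $m$ one recovers a generating set of $\mathcal P^m$. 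This is the step I expect to require the most care: one must check that the repeated-column structure of $\mathbb A$ matches the combinatorics of multiplying out $(\;\cdot\;)^m$, and that no collapse or extra generators appear — essentially a bookkeeping verification that $\widetilde{(\cdot)^m}=\widetilde{(\cdot)}^m$ for complete-intersection monomial ideals under this particular lifting matrix. Combining with Remark~\ref{rem:confronto}(iii) then yields $\mathbf I = \mathcal P_1^m \cap\cdots\cap\mathcal P_h^m$, which by Definition~\ref{def:fat complete grid} (in the grid case) is exactly $I(\mathbb Y)$, and in either case is the claimed intersection.

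Finally I would identify $\mathcal P_1^m\cap\cdots\cap\mathcal P_h^m$ with the symbolic power $I^{(m)}$. Here I would use that each $\mathcal P_t$ is a prime of height $n-1$ (so $I = \cap_t \mathcal P_t$ is the minimal primary decomposition of the radical ideal $I$, by Proposition~\ref{prop:properties pseudo}(i)), that the $\mathcal P_t$ are pairwise comaximal after localization at each associated prime, and that for a radical ideal with these associated primes one has $I^{(m)} = \bigcap_t (I R_{\mathcal P_t})^m \cap R = \bigcap_t \mathcal P_t^{(m)}$; since each $\mathcal P_t$ is a complete intersection (generated by $n-1$ of the linear forms $L_{j,i}$, by condition $(\alpha')$), $\mathcal P_t^{(m)} = \mathcal P_t^m$. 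Hence $I^{(m)} = \bigcap_t \mathcal P_t^m = \mathbf I$. I would remark that the inequality $I^{(m)}\neq I^m$ in general is then immediate, since the intersection of the powers is strictly larger than their product once $h\geq 2$ and $m\geq 2$, matching the sentence preceding the proposition.
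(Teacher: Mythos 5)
Your overall strategy is essentially the paper's (commute the pseudo-lifting with the intersection $\Gamma=\cap_{k}(x_1^{\ell_1},\dots,\widehat{x_k^{\ell_k}},\dots,x_n^{\ell_n})^m$ via Remark~\ref{rem:confronto}(iii), exploit the block structure of $\mathbb A$ on the generators through \eqref{eq:termine}, and compute $I^{(m)}$ by localizing at the associated primes and using that symbolic and ordinary powers agree for complete intersections), but the pivotal claim of your second paragraph is not correct as stated and leaves a genuine gap. The $\mathbb A$-pseudo-lifting of $(x_{j_1}^{\ell_{j_1}},\dots,x_{j_k}^{\ell_{j_k}})^m$ is the $m$-th power of $(f_{j_1},\dots,f_{j_k})$ with $f_j=L_{j,1}\cdots L_{j,\ell_j}$: your bookkeeping is fine, but this ideal is \emph{not} prime unless all $\ell_j=1$, since each generator $f_j$ splits into $\ell_j$ linear factors. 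Hence what Remark~\ref{rem:confronto}(iii) actually yields is $\mathbf I=\bigcap_{k=1}^n(f_1,\dots,\widehat{f_k},\dots,f_n)^m$, an intersection of $n$ powers of reduced complete intersections, and not yet $\mathcal P_1^m\cap\dots\cap\mathcal P_h^m$ over the $h$ line-primes, which is what your final paragraph (where you prove $I^{(m)}=\cap_t\mathcal P_t^m$) requires.

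The missing step is the equality $(f_1,\dots,\widehat{f_k},\dots,f_n)^m=\bigcap (L_{1,i_1},\dots,\widehat{L_{k,i_k}},\dots,L_{n,i_n})^m$, and it is not formal: for a general radical ideal the $m$-th power is strictly smaller than the intersection of the $m$-th powers of its minimal primes --- that is exactly why $I^{(m)}\neq I^m$ for the ideal $I$ of the proposition. It does hold here because $f_1,\dots,\widehat{f_k},\dots,f_n$ is a regular sequence (condition ($\alpha$)), so by the Zariski--Samuel lemma its powers are unmixed, and localizing at each minimal prime turns the linear factors outside that prime into units; but you invoke the complete-intersection fact only for the linear primes $\mathcal P_t$, never for the ideals $(f_1,\dots,\widehat{f_k},\dots,f_n)$, so the chain $\mathbf I=\mathcal P_1^m\cap\dots\cap\mathcal P_h^m$ is asserted rather than proved (for grids one could instead quote Theorem~\ref{th:lifting fat grid} together with Definition~\ref{def:fat complete grid}, but for pseudo-grids no such shortcut exists). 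The paper avoids descending to the line-primes on the $\mathbf I$ side: it computes $I^{(m)}$ by localization and regroups it as $\cap_k(f_1,\dots,\widehat{f_k},\dots,f_n)^m$, matching $\mathbf I$ at that level. Your argument closes once you either prove the displayed equality by the same Zariski--Samuel argument or compare directly at the level of the $(f_1,\dots,\widehat{f_k},\dots,f_n)^m$. The same conflation appears, more mildly, in your first paragraph: Remark~\ref{rem:confronto}(iii) gives $I=\cap_k(f_1,\dots,\widehat{f_k},\dots,f_n)$, and identifying this with $\cap_t\mathcal P_t$ uses that each of these complete intersections is reduced (condition ($\alpha'$)); that is harmless for the radical, but it is precisely the point that stops being automatic once powers are taken.
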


\begin{proof} By Theorems \ref{th:lifting complete grid} and \ref{th:lifting fat grid} we can describe complete grids and $m$-fat grids as lifting of $\Omega$ and $\Gamma$, respectively, analogously to complete pseudo-grids and $m$-fat pseudo-grids.
In particular, thanks to the properties of associated prime ideals of a lifting (see \cite[Remark 1.11 and Proposition 1.14]{CMR2005}) and to condition $(\alpha')$, we obtain  
$$I= (f_if_j : 1\leq i<j\leq n)= \cap_{k=1}^n (f_1,\dots,\widehat{f_k},\dots,f_n)=$$ 
$$=\bigcap_{k=1}^n \cap_{1\leq i_h\leq \ell_h, 1\leq h \leq n}  (L_{1,i_1},L_{2,i_2},\dots,\widehat{L_{k,i_k}},\dots,L_{n,i_n}).$$
So, the primes associated to $I$ are the ideals $(L_{1,i_1},L_{2,i_2},\dots,\widehat{L_{k,i_k}},\dots,L_{n,i_n})$, which are generated by regular sequences. The symbolic power of an ideal generated by a regular sequence coincides with the power of the ideal \cite[Lemma 5, Appendix 6]{ZarSam2} and, taking into account that localization commutes with both intersections and finite products, we obtain 
$$I^{(m)}= \cap_{{\mathfrak p}\in \mathrm{Ass}(R/I)}(I^m R_{\mathfrak p}\cap R)= \cap_{{\mathfrak p}\in \mathrm{Ass}(R/I)}(\mathfrak p^m R_{\mathfrak p}\cap R) =$$
$$=\bigcap_{k=1}^n  \cap_{1\leq i_h\leq \ell_h, 1\leq h \leq n}  (L_{1,i_1},L_{2,i_2},\dots,\widehat{L_{k,i_k}},\dots,L_{n,i_n})^m=$$
$$= \cap_{k=1}^n (f_1,\dots,\widehat{f_k},\dots,f_n)^{m}=\mathbf I,$$ 
also using Definition \ref{def:pseudo}(2), the shape of the generators of the ideals $(x_1^{\ell_1},\dots,\widehat{x_k^{\ell_k}},\dots,x_n^{\ell_n})^m$ and Remark \ref{rem:conditions on lifting}(iii).
\end{proof}

Thanks to the relation between liftings and complete grids, it is possible to recognize that complete grids and pseudo-grids of lines are particular {\em $\lambda$-configurations} of codimension $n-1$. Let us briefly recall what a $\lambda$-configuration is.
Let $f_1,\dots,f_s\in R$ be homogeneous forms with positive degrees $d_1,\dots,d_s$ such that, denoted by $F_1,\dots,F_s$ the hypersurfaces they define in $\mathbb P^n_K$, for any $1\leq c\leq n$ the intersection of any $c+1$ of these hypersurfaces has codimension $c+1$. 
Setting $\lambda:=[d_1,\dots,d_s]$,  
a hypersurface configuration of codimension $c$ is the union $V_{\lambda,c}$ of all the codimension~$c$ complete intersection subschemes obtained by intersecting $c$ of the hypersurfaces (see \cite{GHMN17} and also \cite[Definition 2.1]{PS15}). The minimal generators of $I(V_{\lambda,c})$ are all the products of $s-c+1$ of the forms $f_1,\dots,f_s$ (see \cite[Proposition 2.3(4)]{GHMN17}). 
The hypersurface configurations for which $d_1=\dots=d_n=1$ include the so-called {\em codimension $c$ (linear) star-configurations} (see also \cite{GHM13}).
If $\lambda\not=[1,\dots,1]$ then a hypersurface configuration is also called a {\em $\lambda$-configuration}.

In order to see that complete grids and pseudo-grids are hypersurface configurations, it is enough to take $\lambda=[\ell_1,\dots,\ell_n]$ and the forms $f_1=\prod_{i=1}^{\ell_1} L_{1,i},\dots,f_n=\prod_{i=1}^{\ell_n} L_{n,i}$, where $A=(L_{j,i})$ is the $1$-lifting (resp.~$1$-pseudo-lifting) matrix satisfying condition~$(\alpha')$ and determining the ideal $I$ of the given complete grid (or  pseudo-grid). In fact, $I$ is the ideal obtained replacing every variable $x_i$ by the polynomial $f_i$ in the generators of the Stanley-Reisner ideal $I_{2,n}$.

%%%%%%%%%%%%%%%%%%%%%%%%%%%%%%%%%%%%%%%%%%%%%%%%%%%%%%%%%%%%%%%%
%% Weighted ideals and Betti numbers of $\Omega$ and $\Gamma$ %%
%%%%%%%%%%%%%%%%%%%%%%%%%%%%%%%%%%%%%%%%%%%%%%%%%%%%%%%%%%%%%%%%

\section{Weighted ideals and Betti numbers of $\Omega$ and $\Gamma$}\label{sec:weighted ideals}

The shape of the monomial ideals $\Omega$ and $\Gamma$ suggests the following definition. Recall that $B_J$ denotes the minimal monomial generating set of a monomial ideal $J$.

\begin{definition}\label{def:weighted}
Given $n$ positive integers $\ell_1,\dots,\ell_n$, a monomial ideal $J\subset S$ is said {\em weighted} (by ($\ell_1,\dots,\ell_n$)) if $B_J$ consists of terms of type $x_1^{\alpha_1\ell_1} \dots x_n^{\alpha_n\ell_n}$.
\end{definition}
 
Take a set of new variables $y_1,\dots,y_n$ and the polynomial ring $P:=K[y_1,\dots,y_n]$. Given $n$ positive integers $\ell_1,\dots,\ell_n$, we consider the following ring monomorphism:
\begin{equation}\label{eq:monomorfismo}
\Phi: y_i \in K[y_1,\dots,y_n] \rightarrow x_i^{\ell_i}\in K[x_1,\dots,x_n].
\end{equation}
This is a particular case of a flat map studied by Hartshorne in \cite{Hart1966}. Here we focus on this particular case.

For convenience, we will denote by $\Phi$ also every map $y_i e_k \in K[y_1,\dots,y_n]^t \rightarrow x_i^{\ell_i} e_k\in K[x_1,\dots,x_n]^t$, where $e_k$ is the $k$-th generator of the canonical basis of $K[y_1,\dots,y_n]^t$ (respectively, of $K[x_1,\dots,x_n]^t$). 

Given a weighted ideal $J\subset S$, there is always a monomial ideal $\overline{J}\subset K[y_1,\dots,y_n]$ such that $\Phi(B_{\overline J})=B_J$, i.e.~$\Phi(\overline J)=J$. In fact, it is obvious that every monomial ideal is weighted at least by $(1,\dots,1)$, but this case is banal. 

The ring monomorphism $\Phi$ gives the following crucial connection between the syzygies of $\overline J$ and those of $J$.

\begin{theorem}\label{th:betti weighted}
For every $h\in\{0,\dots,n\}$, the map $\Phi$ provides a bijection between a set of minimal generators of the $h$-th module $\overline F_h$ of syzygies of $\overline J$ and a set of minimal generators of the $h$-th module $F_h$ of syzygies of the weighted ideal $J~=~\Phi(\overline J)K[x_1,\dots,x_n]$. In particular, the total Betti numbers of $\overline J$ coincide with the total Betti numbers of $J$.
\end{theorem}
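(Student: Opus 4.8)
The plan is to exploit the flatness of the map $\Phi$ introduced in \eqref{eq:monomorfismo}, or more precisely to produce a free resolution of $J$ directly from a minimal free resolution of $\overline J$ by applying $\Phi$ termwise, and then to argue that the resulting complex is still minimal. First I would take a minimal free resolution $\overline{G}_\bullet \to \overline J \to 0$ of $\overline J$ over $P=K[y_1,\dots,y_n]$, say $0\to \overline G_n\to \cdots\to \overline G_0\to \overline J\to 0$ with $\overline G_h=\oplus_a P(-b_{h,a})$ and differentials given by matrices of monomials (or more generally polynomials) in the $y_i$. Tensoring up along $\Phi$, i.e.\ applying $-\otimes_P S$ where $S$ is viewed as a $P$-module via $\Phi$, yields a complex $G_\bullet := \overline G_\bullet\otimes_P S$ of free $S$-modules whose differentials are obtained by the substitution $y_i\mapsto x_i^{\ell_i}$ in the matrix entries. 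Since $S$ is a free (hence flat) $P$-module via $\Phi$ — it is free with basis the monomials $x_1^{c_1}\cdots x_n^{c_n}$, $0\le c_i<\ell_i$, which is the content of the Hartshorne-type statement cited after \eqref{eq:monomorfismo} — exactness is preserved, so $G_\bullet$ is a free resolution of $J\otimes_P S$; and because $\Phi$ carries $B_{\overline J}$ bijectively onto $B_J$, we have $G_0\otimes\to J\to 0$ and $J = \Phi(\overline J)S$, so $G_\bullet$ is indeed a free resolution of the ideal $J$.

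Next I would check that this resolution $G_\bullet$ of $J$ is minimal. Minimality of $\overline G_\bullet$ means every entry of every differential matrix of $\overline G_\bullet$ lies in the irrelevant ideal $(y_1,\dots,y_n)P$, equivalently has no nonzero constant term. Applying $\Phi$, every such entry becomes a polynomial in $x_1^{\ell_1},\dots,x_n^{\ell_n}$ with no constant term, hence lies in $(x_1^{\ell_1},\dots,x_n^{\ell_n})S\subseteq (x_1,\dots,x_n)S = \gm_S$. Therefore all differentials of $G_\bullet$ have entries in $\gm_S$, which is exactly the condition for $G_\bullet$ to be a minimal free resolution of $J$. Consequently a basis of $G_h$ is a minimal generating set of the $h$-th syzygy module $F_h$ of $J$, and $\Phi$ sends the chosen basis of $\overline G_h$ (a minimal generating set of $\overline F_h$) bijectively onto it. This gives the claimed bijection for every $h\in\{0,\dots,n\}$, and taking cardinalities at each homological degree — noting that the rank of $G_h$ equals the rank of $\overline G_h$ since $\otimes_P S$ along a flat ring map preserves ranks of free modules — yields the equality $\beta_h(J)=\beta_h(\overline J)$ of total Betti numbers.

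A small point that needs care, and which I expect to be the only genuine subtlety, is the identification of "the $h$-th module $\overline F_h$ of syzygies" with "the $h$-th free module $\overline G_h$ in a minimal resolution," and correspondingly for $J$: one must phrase things so that $F_h = \mathrm{Syz}^h(J)$ really is the module resolved by the tail of $G_\bullet$, i.e.\ $F_h=\ker(G_{h-1}\to G_{h-2})\cong\mathrm{coker}(G_{h+1}\to G_h)$, and check that the substitution $y_i\mapsto x_i^{\ell_i}$ carries a minimal generating set of $\overline F_h$ to a minimal generating set of $F_h$. This is immediate once minimality of $G_\bullet$ is established as above, since for a minimal complex the image of $G_{h+1}$ in $G_h$ lies in $\gm_S G_h$, so by Nakayama a basis of $G_h$ descends to a minimal generating set of $\mathrm{coker}(G_{h+1}\to G_h)=F_h$; and $\Phi$ applied to a basis of $\overline G_h$ is by construction such a basis of $G_h$. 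The other place to be slightly careful is the flatness input: one should cite that $S$ is free over $P$ via $\Phi$ with the explicit monomial basis above (this is elementary here even though in general it is Hartshorne's result), so that no higher $\mathrm{Tor}_P$ terms intervene when we tensor the resolution. With these two points dispatched, the theorem follows; note that when all $\ell_i=1$ the map $\Phi$ is the identity and the statement is a tautology, consistent with the remark in the text that that case is banal.
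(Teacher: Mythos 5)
Your argument is correct, but it follows a genuinely different route from the paper. The paper proves the statement by induction on the homological degree $h$, using Gr\"obner-basis (Schreyer-type) syzygy computations: the key observation there is that the set of ``weighted'' terms $x_1^{a_1\ell_1}\cdots x_n^{a_n\ell_n}$ is closed under products, divisions and least common multiples, so the rewriting procedure that produces minimal $h$-th syzygies of $J$ from minimal $(h-1)$-st ones never leaves the image of $\Phi$, and injectivity of $\Phi$ then gives the bijection. You instead transport a whole minimal free resolution of $\overline J$ along $\Phi$ by base change, using that $S$ is free over $\Phi(P)=K[x_1^{\ell_1},\dots,x_n^{\ell_n}]$ with basis the monomials $x^c$, $0\le c_i<\ell_i$, and then check that the substituted differentials still have entries in $\gm$. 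This is the ``flatness'' viewpoint that the paper only alludes to (the Hartshorne citation after \eqref{eq:monomorfismo} and the comparison with \cite[Lemma 3.1]{GHMN17}); it is arguably cleaner, gives the maps of the resolution of $J$ at once by the substitution $y_i\mapsto x_i^{\ell_i}$, and avoids any induction, whereas the paper's proof is more computational and consistent with the rewriting-procedure point of view used in Remark~\ref{rem:dimostrazione indipendente}. One small point you should make explicit: when the $\ell_i$ are not all equal, substituting $y_i\mapsto x_i^{\ell_i}$ into a standard-graded minimal resolution of $\overline J$ need not produce maps that are homogeneous for the standard grading of $S$, so to speak of a \emph{minimal graded} resolution of $J$ you should either start from the $\mathbb Z^n$-graded (multigraded) minimal resolution of the monomial ideal $\overline J$ (whose differential entries are scalar multiples of monomials, hence remain multigraded after substitution), or equip $P$ with the grading $\deg y_i=\ell_i$ as in Example~\ref{ex:unmixed}, or else extract the total Betti numbers by computing $\mathrm{Tor}^S_h(J,K)$ with your (possibly ungraded) complex, which the condition ``entries in $\gm$'' makes immediate. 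With that adjustment your Nakayama argument identifying the images of the bases with minimal generating sets of the syzygy modules is complete.
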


\begin{proof}
We proceed by induction on the index $h$ of the modules $F_h$ and first we observe that for $h=0$ the statement holds by definition of the ideals $\overline J$ e $J$. 

Now, assume that the statement is true for $h-1$. Thus, minimal $(h-1)$-syzygies of $J$ are vectors with components equal to $K$-linear combinations of terms of type $x_1^{a_1\ell_1}\dots x_n^{a_n\ell_n}$. Multiplications, divisions and computations of least common multiples between terms of this type are still terms of the same type. Hence, following standard constructions of syzygies that are based on Gr\"obner bases rewriting procedures, we can compute 
minimal $h$-th syzygies of $J$ that are of the same type of the $(h-1)$-th syzygies, i.e.~vectors with components equal to $K$-linear combinations of terms of type $x_1^{a_1\ell_1}\dots x_n^{a_n\ell_n}$. So, every minimal $h$-th syzygy of $J$ is the image of a minimal $h$-th syzygy of $\overline J$ by the map $\Phi$. 

Conversely, by construction, a $h$-th syzygy of $\overline J$ is transformed by the map $\Phi$ in a $h$-th syzygy of $J$. The injectivity of the map $\Phi$ preserves the minimality of the generators.
\end{proof}

\begin{remark}
The lattice $X$ of Theorem \ref{th:lifting complete grid} is a $1$-lifting of the ideal $\mathfrak b=(x_1^{\ell_1},\dots,x_n^{\ell_n})$ \cite[Theorem 2.6]{GOR2014}. Hence, we have $\overline{\mathfrak b}=(y_1,\dots,y_n)$ and the total Betti numbers of a polynomial complete intersection of height $n$ coincide with the total Betti numbers of the ideal $(y_1,\dots,y_n)$.
\end{remark}

If the map $\Phi$ is determined by pairwise equal integers $\ell_1=\dots=\ell_n=\ell$, then $\Phi$ behaves as a homogeneous map over the syzygies of $\overline J$ and the graded Betti numbers are preserved up to a multiplication of the shifts by $\ell$, as it is easy to see. The particular case $\ell_1=\dots=\ell_n=1$ corresponds to the fact that $J$ and $\overline J$ coincide up to the name of the variables of the polynomial rings, in our context. The following example shows that, when the integers $(\ell_1,\dots,\ell_n)$ are not equal each other, the graded Betti numbers of $\overline J$ and $J= \Phi( \overline J)$ are preserved up to {\em additional} degree shifts corresponding to the integers $\ell_i$, according to \cite[Lemma~3.1]{GHMN17}. However, assigning the ring $K[y_1,\dots,y_n]$ the different grading with $deg(y_i)=\ell_i$, the homomorphism $\Phi$ becomes a homomorphism of graded rings and the graded Betti numbers are exactly the same, according to \cite[Lemma~3.1]{GHMN17} too.

\begin{example}\label{ex:unmixed}
Consider the ideal $\overline J=(y_1,y_2^2) \cap (y_1^2,y_2,y_3) = (y_1y_3, y_1y_2, y_2^2, y_1^2)\subset P=K[y_1,y_2,y_3]$. The minimal free resolution of $\overline J$ is:
$$0\rightarrow P(-4) \rightarrow P^4(-3) \rightarrow P^4(-2)\rightarrow \overline J \rightarrow 0,$$
with generators of the first module of syzygies $v_1=[y_2, -y_3, 0, 0]$, $v_2=[y_1, 0, 0, -y_3]$, $v_3=[0, y_2, -y_1, 0]$, $v_4=[0, y_1, 0, -y_2]$ and of the second module of syzygies $w=[y_1,-y_2,0,y_3]$. Then, for every $\ell_1,\ell_2,\ell_3$, the minimal free resolution of the weighted ideal $J=(x_1^{\ell_1},x_2^{2\ell_2}) \cap (x_1^{2\ell_1},x_2^{\ell_2},x_3^{\ell_3}) = (x_1^{\ell_1}x_3^{\ell_3}, x_1^{\ell_1}x_2^{\ell_2}, x_2^{2\ell_2}, x_1^{2\ell_1})\subset S=K[x_1,x_2,x_3]$ is:
$$0\rightarrow S(-2\ell_1-\ell_2-\ell_3) \rightarrow S(-\ell_1-\ell_2-\ell_3)\oplus S(-2\ell_1-\ell_3) \oplus S(-\ell_1-2\ell_2) \oplus S(-2\ell_1-\ell_2) \rightarrow$$
$$\rightarrow S(-\ell_1-\ell_3)\oplus S(-\ell_1-\ell_2)\oplus S(-2\ell_2) \oplus S(-2\ell_1) \rightarrow J \rightarrow 0,$$
with first module of syzygies generated by $\Phi(v_1)=[x_2^{\ell_2},-x_3^{\ell_3},0,0]$, $\Phi(v_2)=[x_1^{\ell_1},0,0,-x_3^{\ell_3}]$, $\Phi(v_3)=[0,x_2^{\ell_2},-x_1^{\ell_1},0]$, $\Phi(v_4)=[0,x_1^{\ell_1},0,-x_2^{\ell_2}]$ and second module of syzygies generated by $\Phi(w)=[x_1^{\ell_1},x_2^{\ell_2},0,x_3^{\ell_3}]$. If we assign $P$ the non standard grading $\deg(y_i)=\ell_i$, then the minimal free resolution of $\overline J$ is just
$$0\rightarrow P(-2\ell_1-\ell_2-\ell_3) \rightarrow P(-\ell_1-\ell_2-\ell_3)\oplus P(-2\ell_1-\ell_3) \oplus P(-\ell_1-2\ell_2) \oplus P(-2\ell_1-\ell_2) \rightarrow$$
$$\rightarrow P(-\ell_1-\ell_3)\oplus P(-\ell_1-\ell_2)\oplus P(-2\ell_2) \oplus P(-2\ell_1) \rightarrow \overline J \rightarrow 0.$$
\end{example}

Among the results in the following proposition, {\em (i), (ii)} can be obtained from~\cite{Hart1966} and {\em (iii)} is analogous to \cite[Theorem~3.3(1)]{GHMN17}, which is differently proved for Stanley-Reisner ideals of matroids.

\begin{proposition}\label{prop:osservazioni weighted} 
Let $J\subset S$ be a monomial weighted ideal and $\overline J\subset P$ be a monomial ideal such that $\Phi(B_{\overline J})=B_J$.
\begin{itemize}
\item[(i)] $J$ is unmixed $\Leftrightarrow$ $\overline J$ is unmixed. 
\item[(ii)] $S/J$ and $P/\overline J$ have the same Krull dimension.
\item[(iii)] $S/J$ is Cohen-Macaulay $\Leftrightarrow$ $P/\overline J$ is Cohen-Macaulay.  
\end{itemize}
\end{proposition}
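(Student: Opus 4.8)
The plan is to derive all three statements from the flatness of the map $\Phi\colon P\to S$ of \eqref{eq:monomorfismo}: following \cite{Hart1966}, as recalled there, $\Phi$ makes $S$ a \emph{free} $P$-module of rank $\ell_1\cdots\ell_n$, with basis the standard monomials $\{x_1^{a_1}\cdots x_n^{a_n}:0\le a_i<\ell_i\}$; concretely $S\cong P[X_1,\dots,X_n]/(X_1^{\ell_1}-y_1,\dots,X_n^{\ell_n}-y_n)$ with $X_i\mapsto x_i$. Since $J$ is by hypothesis generated as an ideal of $S$ by $\Phi(B_{\overline J})$, it equals the extension $\overline J S$ of $\overline J$ along $\Phi$; hence $S/J\cong(P/\overline J)\otimes_P S$ is a finitely generated \emph{free} $P/\overline J$-module, and by faithful flatness $\overline J S\cap P=\overline J$, so $P/\overline J\hookrightarrow S/J$ is a module-finite injective extension of rings. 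Statement \emph{(ii)} is then immediate: a module-finite injective (hence integral) extension of Noetherian rings preserves Krull dimension, so $\dim(P/\overline J)=\dim(S/J)$.

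For \emph{(i)} the plan is to compare associated primes via the flat base-change formula $\mathrm{Ass}_S(S/J)=\mathrm{Ass}_S((P/\overline J)\otimes_P S)=\bigcup_{\mathfrak p\in\mathrm{Ass}_P(P/\overline J)}\mathrm{Ass}_S(S/\mathfrak p S)$. Since $\Phi$ sends monomials to monomials, $J$ is again monomial, so the associated primes of $P/\overline J$ and of $S/J$ are monomial primes, say $\mathfrak p_T=(y_i:i\in T)$ and $\mathfrak q_T=(x_i:i\in T)$ for subsets $T\subseteq\{1,\dots,n\}$. I would then observe that the fibre $S/\mathfrak p_T S=S/(x_i^{\ell_i}:i\in T)$ is the tensor product over $K$ of the Artinian local ring $K[x_i:i\in T]/(x_i^{\ell_i}:i\in T)$ with the polynomial ring $K[x_j:j\notin T]$, hence has the single associated prime $\mathfrak q_T$. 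Therefore $\mathfrak p_T\mapsto\mathfrak q_T$ is a bijection $\mathrm{Ass}_P(P/\overline J)\to\mathrm{Ass}_S(S/J)$ with $\dim(P/\mathfrak p_T)=n-|T|=\dim(S/\mathfrak q_T)$; as ``unmixed'' means that all associated primes have the same dimension, \emph{(i)} follows at once (and, taking maxima over $T$, this re-proves \emph{(ii)}).

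For \emph{(iii)} the idea is to exploit that $P/\overline J\to S/J$ is flat (indeed free) and module-finite with Cohen-Macaulay fibres: the fibre over a prime $\mathfrak p$ of $P/\overline J$ is $(S/J)\otimes_{P/\overline J}\kappa(\mathfrak p)=S\otimes_P\kappa(\mathfrak p)=\kappa(\mathfrak p)[X_1,\dots,X_n]/(X_i^{\ell_i}-\overline{y_i})$, a complete intersection and hence Cohen-Macaulay. Then for any prime $\mathfrak q$ of $S/J$ lying over a prime $\mathfrak p$ of $P/\overline J$, the flat local homomorphism $(P/\overline J)_{\mathfrak p}\to(S/J)_{\mathfrak q}$ with Cohen-Macaulay closed fibre yields $(S/J)_{\mathfrak q}$ Cohen-Macaulay $\Leftrightarrow(P/\overline J)_{\mathfrak p}$ Cohen-Macaulay; combining this with lying-over and the fact that for finitely generated graded $K$-algebras the Cohen-Macaulay property can be tested at the (graded) maximal ideals, one concludes $S/J$ Cohen-Macaulay $\Leftrightarrow P/\overline J$ Cohen-Macaulay. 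A slicker variant: give $P$ the grading $\deg(y_i)=\ell_i$, so that $\Phi$ becomes graded and $S/J$ is a graded finite free $P/\overline J$-module, whence $\mathrm{depth}$ and $\dim$ are preserved simultaneously. I expect \emph{(iii)} to be the only part needing real care — pinning down the fibres of $P/\overline J\to S/J$, checking they are Cohen-Macaulay, and moving between the local and graded formulations — whereas \emph{(i)} and \emph{(ii)} are essentially immediate once the free-module structure of $S$ over $P$ is available.
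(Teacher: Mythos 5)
Your proposal is correct, but it proves the proposition by a genuinely different route from the paper. The paper disposes of \emph{(i)} and \emph{(ii)} in one line, asserting that a primary decomposition of $\overline J$ is carried by $\Phi$ to one of $J$, and then proves \emph{(iii)} by combining \emph{(i)}--\emph{(ii)} with Theorem~\ref{th:betti weighted} (equality of projective dimensions of $\overline J$ and $J$) and the graded Auslander--Buchsbaum formula; so in the paper \emph{(iii)} is a corollary of the syzygy-theoretic result, whose proof rests on the Gr\"obner rewriting argument. You instead exploit the structural fact behind the map \eqref{eq:monomorfismo}: $S$ is free (hence faithfully flat) over $P$ with basis the monomials $x_1^{a_1}\cdots x_n^{a_n}$, $0\le a_i<\ell_i$, so $S/J\cong (P/\overline J)\otimes_P S$ is a finite free $P/\overline J$-module; then \emph{(ii)} is integrality, \emph{(i)} follows from flat base change of associated primes together with the observation that $(x_i^{\ell_i}:i\in T)$ is $\mathfrak q_T$-primary, giving the explicit dimension-preserving bijection $\mathfrak p_T\mapsto\mathfrak q_T$, and \emph{(iii)} follows from ascent/descent of Cohen--Macaulayness along a flat map with Artinian (hence CM) complete-intersection fibres, or from the graded depth argument you sketch. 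What each approach buys: the paper's proof of \emph{(iii)} is very short because Theorem~\ref{th:betti weighted} is already available, but it entangles the proposition with the Betti-number machinery; your proof is independent of that theorem, is closer to the sources the paper itself points to for this statement (\cite{Hart1966} for \emph{(i)},\emph{(ii)} and \cite{GHMN17} for \emph{(iii)}), and in fact supplies the justification that the paper's phrase \lq\lq preserved by $\Phi$ because $\Phi$ is a ring monomorphism\rq\rq{} leaves implicit --- injectivity alone would not preserve primary decompositions; it is the freeness/flatness of $S$ over $P$ (equivalently, that the extensions $(x_{i_1}^{\ell_{i_1}},\dots,x_{i_k}^{\ell_{i_k}})$ of the monomial primary components remain primary of the same height) that makes the one-line argument work.
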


\begin{proof}
For items {\em (i), (ii)}, it is enough to observe that a primary radical decomposition of $\overline J$ is preserved by $\Phi$ because $\Phi$ is a ring monomorphism. 

For item {\em (iii)}, thanks to Proposition \ref{prop:osservazioni weighted}, we know that $J$ is unmixed if and only if $\overline J$ is unmixed and that $K[x_1,\dots,x_n]/J$ and $K[y_1,\dots,y_n]/\overline J$ have the same Krull-dimension. From Theorem \ref{th:betti weighted} we obtain that $K[x_1,\dots,x_n]/J$ and $K[y_1,\dots,y_n]/\overline J$ have the same projective dimension, that proves the statement thanks to the Auslander-Buchsbaum Formula in the graded case (see \cite[Theorem 4.4.15]{Weibel} and for the graded case \cite[Exercise 19.8]{Ei}).  
\end{proof}

\begin{example}\label{ex:unmixed2}
Consider the non-unmixed ideal $\overline J=(y_1,y_2^2) \cap (y_1^2,y_2,y_3) = (y_1y_3, y_1y_2, y_2^2, y_1^2)$ in $P=K[y_1,y_2,y_3]$ of Example \ref{ex:unmixed}. The Krull dimension of $P/\overline J$ is $1$. Then, for every positive integers $\ell_1,\ell_2,\ell_3$, the weighted ideal $J=(x_1^{\ell_1},x_2^{2\ell_2}) \cap (x_1^{2\ell_1},x_2^{\ell_2},x_3^{\ell_3}) = (x_1^{\ell_1}x_3^{\ell_3}, x_1^{\ell_1}x_2^{\ell_2},$ $x_2^{2\ell_2}, x_1^{2\ell_1})\subset S=K[x_1,x_2,x_3]$ is non-unmixed and the Krull dimension of $S/J$ is $1$.
\end{example}

The unmixed monomial ideal $\Omega=\cap_{k=1}^n (x_1^{\ell_1},\dots,\widehat{x_k^{\ell_k}},\dots,x_n^{\ell_n})$ is generated by $B_\Omega=\{x_i^{\ell_i} x_j^{\ell_j} : 1\leq i<j \leq n\}$. So, $\Omega$ is weighted by ($\ell_1,\dots,\ell_n$) and an ideal $\overline{\Omega}\subset K[y_1,\dots,y_n]$ satisfying the equality $B_\Omega=\Phi(B_{\overline{\Omega}})$ is generated by $B_{\overline{\Omega}}=\{y_i y_j : 1\leq i<j\leq  n\}$.
This ideal $\overline\Omega$ is the Stanley-Reisner ideal of a matroid, more precisely is the ideal $I_{2,n}$ already introduced at the end of Section \ref{sec:preliminaries}. For this ideal, one finds a relation between the result of Theorem~\ref{th:betti weighted} and that of \cite[Theorem 3.3(3)]{GHMN17}. Nevertheless, the investigation of \cite[Theorem 3.3(3)]{GHMN17} is based on \cite[Lemma 3.1]{GHMN17}, which is in part contradicted by Example \ref{ex:unmixed}, as already observed.

The ideal $\overline{\Omega}=\cap_{k=1}^n(y_1,\dots,\hat y_k,\dots,y_n)$ is the saturated ideal of the scheme of the $n$ projective points $(1,0,\dots,0),\dots,(0,\dots,0,1)$ in $\mathbb P^{n-1}_K$. Hence, the graded Betti numbers of $\overline{\Omega}$ coincide with the graded Betti numbers of the defining ideal of any set of $n$ general points in $\mathbb P^{n-1}_K$, because a linear and invertible change of coordinates preserves graded Betti numbers. We mean that a set of $n$ points in $\mathbb P^{n-1}_K$ is general if it belongs to the open subset of $\mathbb P^{n-1}_K\times \dots \times \mathbb P^{n-1}_K$ ($n$ times) made of all subsets of $n$ points of $\mathbb P^{n-1}_K$ not contained in any linear projective subvariety of dimension lower than $n-1$ (see also \cite{F2001}).

Next Lemma \ref{lemma:risoluzione punti} is a particular case of \cite[Theorem~2.1]{Gal16} and \cite[Theorem~4.1]{ELSW18} (see also \cite[Remark 2.11]{GHM13}, \cite[Theorem 3.4]{PS15}).
We now give an independent geometrical proof of this  particular case.

\begin{lemma}\label{lemma:risoluzione punti}
The minimal free resolution of the ideal $\overline{\Omega}\subset P$ is of type
$$0 \rightarrow P^{\overline\beta_{n-2}}(-n) \rightarrow \cdots \rightarrow P^{\overline\beta_2}(-4) \rightarrow P^{\overline\beta_1}(-3) \rightarrow P^{\overline\beta_0}(-2) \rightarrow \overline{\Omega} \rightarrow 0,$$
where, for every $h\in\{0,\dots,n-2\}$, $\displaystyle\overline\beta_h=(h+1)\binom{n}{h+2}$ coincides with the $h$-th Betti number of the rational normal curve in $\mathbb P^n_K$.
\end{lemma}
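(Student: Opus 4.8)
The plan is to give a self-contained geometric proof by identifying $\overline{\Omega}$ with the defining ideal of a specific $0$-dimensional subscheme and then computing its resolution via the known resolution of the rational normal curve. First I would observe that $\overline{\Omega}=\cap_{k=1}^n(y_1,\dots,\widehat{y_k},\dots,y_n)$ is the saturated ideal of the set $Z$ of the $n$ coordinate points $e_1,\dots,e_n\in\mathbb P^{n-1}_K$, and that, since graded Betti numbers are invariant under linear changes of coordinates, we may replace $Z$ by any set of $n$ points in general position in $\mathbb P^{n-1}_K$. Choosing $n$ general points on a rational normal curve $C\subset\mathbb P^{n-1}_K$ of degree $n-1$, I would exploit the fact that such a point set is a \emph{hyperplane section} of $C$: there is a hyperplane $H\simeq\mathbb P^{n-1}_K$ (after re-embedding, or rather viewing $C\subset\mathbb P^n_K$ and cutting with a general hyperplane) whose intersection with $C$ is $n$ reduced general points. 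The statement we want is phrased in terms of the rational normal curve in $\mathbb P^n_K$, which has degree $n$ and spans $\mathbb P^n_K$; a general hyperplane meets it in $n$ general points of a $\mathbb P^{n-1}_K$.

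The key computational input is that the rational normal curve of degree $n$ in $\mathbb P^n_K$ is arithmetically Cohen-Macaulay with a well-known (Eagon--Northcott) minimal free resolution: over $T=K[z_0,\dots,z_n]$ one has $0\to T^{\beta_{n-1}}(-n)\to\cdots\to T^{\beta_1}(-3)\to T^{\beta_0}(-2)\to I(C)\to 0$ with $\beta_h=(h+1)\binom{n+1}{h+2}$. The next step is the standard hyperplane-section argument: if $C\subset\mathbb P^n_K$ is aCM of dimension $1$ and $H$ is a general hyperplane, then the ideal of $C\cap H$ in $H\simeq\mathbb P^{n-1}_K$ has minimal free resolution obtained by reducing the resolution of $I(C)$ modulo a general linear form, i.e. tensoring with $T/(\ell)\simeq P$. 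Since a general linear form is a nonzerodivisor on $T/I(C)$ (by aCM-ness, depth $=\dim=2$, so a general $\ell$ is regular), this reduction stays exact and minimal, and the Betti numbers are unchanged: $\beta_h^{\overline{\Omega}}=\beta_h^{I(C)}$. This is the point where I would be careful: one must check that $C\cap H$ is precisely $n$ reduced points spanning $H$ (true for general $H$ since $C$ is smooth, nondegenerate, of degree $n$, and $n$ points on a rational normal curve are automatically in linearly general position), and that the reduced resolution computes the \emph{saturated} ideal of these points, not merely an ideal with the same Hilbert function — which again follows from aCM-ness of $C$ passing to the hyperplane section.

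Finally I would reconcile the indexing: the resolution of $\overline{\Omega}$ has length $n-2$ (projective dimension $n-2$ over $P=K[y_1,\dots,y_n]$, since $Z$ is $0$-dimensional in $\mathbb P^{n-1}_K$ and aCM), with $\overline\beta_h=(h+1)\binom{n}{h+2}$ for $h=0,\dots,n-2$; this matches the formula $(h+1)\binom{(n-1)+1}{h+2}$ for the hyperplane section of the rational normal curve of degree $n-1$ in $\mathbb P^{n-1}_K$, equivalently the $h$-th Betti number of the rational normal curve in $\mathbb P^n_K$ as stated. The shifts are all linear (the $h$-th module in degree $h+2$) because the ideal of points in linearly general position in $\mathbb P^{n-1}_K$ with this Hilbert function has a pure linear resolution — a $2$-linear resolution — which one sees directly from the Eagon--Northcott shape. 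The main obstacle is purely expository: being precise about the hyperplane-section reduction (regularity of the general linear form on $T/I(C)$ and the claim that the reduced complex resolves the saturated ideal of the reduced point scheme), rather than any deep new argument; everything else is bookkeeping with binomial coefficients and the classical resolution of the rational normal curve.
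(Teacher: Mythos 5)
Your overall strategy is exactly the paper's: identify $\overline\Omega$ with the saturated ideal of the $n$ coordinate points, pass to $n$ general (spanning) points of $\mathbb P^{n-1}_K$ by a linear change of coordinates, realize these as a general hyperplane section of the rational normal curve $C\subset\mathbb P^n_K$, and transfer the Betti numbers across the hyperplane section using that $C$ is arithmetically Cohen--Macaulay. The paper delegates this last step to \cite[Theorem 1.3.6]{Mi}, while you spell out the reduction modulo a linear form that is regular on $T/I(C)$ and the fact that the reduced ideal is saturated (positive depth of the Artinian-free quotient); that part of your write-up is fine and adds useful detail.

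However, your key numerical input is wrong as stated, and this makes the proposal internally inconsistent. For the rational normal curve of degree $n$ in $\mathbb P^n_K$ (the $2\times 2$ minors of a $2\times n$ Hankel matrix), the Eagon--Northcott complex gives $\beta_h(I(C))=(h+1)\binom{n}{h+2}$ for $h=0,\dots,n-2$, so the resolution of $I(C)$ has length $n-2$ and ends in $T^{\beta_{n-2}}(-n)$. Your formula $\beta_h=(h+1)\binom{n+1}{h+2}$ with top term $T^{\beta_{n-1}}(-n)$ describes instead the rational normal curve in $\mathbb P^{n+1}_K$ (and is incompatible with the shift $-n$, since the resolution is $2$-linear); already for $n=3$ it predicts $6$ quadric generators for the twisted cubic instead of $3$. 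Feeding your stated numbers into the (correct) hyperplane-section argument would yield $\overline\beta_h=(h+1)\binom{n+1}{h+2}$, contradicting the lemma. Your final paragraph then switches to the correct value $(h+1)\binom{n}{h+2}$ without deriving it, attributing it to the hyperplane section of the rational normal curve of degree $n-1$ in $\mathbb P^{n-1}_K$, which is yet another configuration: a general hyperplane section of that curve consists of $n-1$ points in a $\mathbb P^{n-2}_K$, not of the $n$ points in $\mathbb P^{n-1}_K$ you need. Once you quote the correct Eagon--Northcott Betti numbers for $C\subset\mathbb P^n_K$ (as the paper does via \cite[Corollary 6.2]{Eisenbud}) and keep the single curve $C\subset\mathbb P^n_K$ throughout, your argument is correct and coincides with the paper's proof.
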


\begin{proof}
Like we have just observed, the ideal $\overline{\Omega}$ defines a scheme of $n$ projective general points in $\mathbb P^{n-1}_K$. Hence, $\overline \Omega$ has the same Betti numbers of any scheme of $n$ projective general points in $\mathbb P^{n-1}_K$, like a general hyperplane section $Z$ of the rational normal curve $C\subset \mathbb P^n_K$. The $h$-th Betti number of the rational normal curve is $(h+1)\binom{n}{h+2}$, for every $h\in\{0,\dots,n-2\}$ (e.g.~\cite[Corollary 6.2]{Eisenbud}), and coincides with that of $Z$ because $C$ is arithmetically Cohen-Macaulay (for example, see \cite[Theorem 1.3.6]{Mi}). Hence, we can conclude. 
\end{proof}

\begin{proposition}\label{prop:betti grids}
For every $h\in \{0,\dots,n-2\}$, let $\beta^\Omega_h$ be the $h$-th total Betti number of the ideal $\Omega$ and $\beta^Y_h$ the $h$-th total Betti number of a complete grid of lines $Y$ in $\mathbb P^n_K$. Then, $\beta^\Omega_h=\beta^Y_h=(h+1)\binom{n}{h+2}$.
\end{proposition}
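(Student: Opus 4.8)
The plan is to obtain the statement as a concatenation of the results already established, with no genuinely new work. First I would recall the observation made just before Lemma~\ref{lemma:risoluzione punti}: the ideal $\Omega=\cap_{k=1}^n (x_1^{\ell_1},\dots,\widehat{x_k^{\ell_k}},\dots,x_n^{\ell_n})$ is weighted by $(\ell_1,\dots,\ell_n)$, with minimal generating set $B_\Omega=\{x_i^{\ell_i}x_j^{\ell_j}:1\le i<j\le n\}$, and the associated monomial ideal $\overline\Omega\subset P=K[y_1,\dots,y_n]$ with $\Phi(B_{\overline\Omega})=B_\Omega$ is exactly $I_{2,n}=(y_iy_j:1\le i<j\le n)=\cap_{k=1}^n(y_1,\dots,\widehat{y_k},\dots,y_n)$, the saturated ideal of $n$ general points in $\mathbb P^{n-1}_K$.

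Next I would apply Theorem~\ref{th:betti weighted} to the pair $(\overline\Omega,\Omega)$: for every $h$ the map $\Phi$ sets up a bijection between a minimal generating set of the $h$-th syzygy module of $\overline\Omega$ and one of the $h$-th syzygy module of $\Omega$, hence $\beta^\Omega_h=\overline\beta_h$ for all $h$, and in particular $\Omega$ has the same projective dimension as $\overline\Omega$. By Lemma~\ref{lemma:risoluzione punti} this projective dimension is $n-2$ and $\overline\beta_h=(h+1)\binom{n}{h+2}$ for $h\in\{0,\dots,n-2\}$, so $\beta^\Omega_h=(h+1)\binom{n}{h+2}$ on that range. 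Finally, since by Theorem~\ref{th:lifting complete grid} the defining ideal $I(Y)$ of a complete grid $Y$ is a $1$-lifting of $\Omega$, Proposition~\ref{prop:acm e risoluzioni}(i) gives that the graded Betti numbers of $Y$ coincide with those of $\Omega$; summing over the shift, the same holds for the total Betti numbers, i.e. $\beta^Y_h=\beta^\Omega_h$. Chaining the three equalities yields $\beta^\Omega_h=\beta^Y_h=(h+1)\binom{n}{h+2}$.

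I do not expect any real obstacle: the substance is entirely in Theorem~\ref{th:betti weighted} and Lemma~\ref{lemma:risoluzione punti}, and what remains is bookkeeping. The only points worth a sentence are that passing from graded to total Betti numbers is harmless (the latter are sums of the former), and—if one wants the length of the resolution to be confirmed independently—that $\mathrm{pd}(\overline\Omega)=n-2$ also follows from the Cohen--Macaulayness of $P/\overline\Omega$ (Krull dimension $1$, hence depth $1$) via the Auslander--Buchsbaum formula, consistently with Proposition~\ref{prop:osservazioni weighted}(iii).
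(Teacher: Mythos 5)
Your argument is correct and is essentially identical to the paper's proof, which likewise chains Lemma~\ref{lemma:risoluzione punti}, Theorem~\ref{th:betti weighted} and Proposition~\ref{prop:acm e risoluzioni}(i) (via Theorem~\ref{th:lifting complete grid}) to get $\beta^Y_h=\beta^\Omega_h=\overline\beta_h=(h+1)\binom{n}{h+2}$. The extra remarks on passing from graded to total Betti numbers and on confirming the projective dimension are fine but not needed.
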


\begin{proof}
The thesis follows from Lemma \ref{lemma:risoluzione punti}, Theorem \ref{th:betti weighted} and Proposition \ref{prop:acm e risoluzioni}.
\end{proof}

Consider the monomial ideal $\Gamma=\cap_{k=1}^n (x_1^{\ell_1},\dots,\widehat{x_k^{\ell_k}},\dots,x_n^{\ell_n})^m \subset S$. For $n=3$ the minimal monomial basis of $\Gamma$ is explicitly described in \cite{GOR2014}. 
We now find the minimal monomial basis of $\Gamma$ for every $n$, observing that $\Gamma$ is a weighted ideal as well.  
The ideal $\overline\Gamma=\cap_{k=1}^n (y_1,\dots,\hat y_k,\dots,y_n)^m \subset K[y_1,\dots,y_n]$ is the saturated ideal of the $m$-fat point scheme that is supported on the $n$ projective points $(1,0,\dots,0),\dots,(0,\dots,0,1)$ in $\mathbb P^{n-1}_K$. Hence, in particular, $\overline\Gamma$ is the $m$-th symbolic power of $\overline\Omega$. The minimal monomial basis of $\overline\Gamma$ is well-know and described in \cite[Theorem 2.4]{F2001} for points in general position, which can be assumed to be the above points because they have the same Hilbert function, Betti numbers and resolution. 

Analogously to the case of $\overline\Omega$ and $\Omega$, one can see that $\overline\Gamma$ and  $\Gamma$ generally do not share the same graded Betti numbers. 

\begin{lemma}\label{lemma:generatori fat grid}
The ideal $\Gamma\subset S$ is weighted by ($\ell_1,\dots,\ell_n$) with minimal monomial basis 
$$\begin{array}{ll}B_{\Gamma}=\Phi( B_{\overline\Gamma})=\cup_{t=1}^m \Bigl\{ x_1^{\ell_1 b_1}\dots x_n^{\ell_n b_n} \in S_{m+t} \ \vert &\sum_i b_i=m+t;  b_i\leq t \ \forall i=1,\dots,n; \\
  &\exists \ 1\leq u<v\leq n \text{ with } b_u=b_v=t \ \ \Bigr\}.
    \end{array}$$
\end{lemma}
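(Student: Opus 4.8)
The plan is to realise $\Gamma$ as the weighted ideal associated via $\Phi$ with the ideal $\overline\Gamma=\cap_{k=1}^n(y_1,\dots,\widehat{y_k},\dots,y_n)^m$, and then to read off the (well-known) minimal monomial basis of $\overline\Gamma$. Concretely, I would first prove the identity $\Gamma=\Phi(\overline\Gamma)K[x_1,\dots,x_n]$; once this is known, the case $h=0$ of Theorem~\ref{th:betti weighted} gives at once that $\Gamma$ is weighted by $(\ell_1,\dots,\ell_n)$ and that $B_\Gamma=\Phi(B_{\overline\Gamma})$ (equivalently, $\Phi$ sends the minimal monomial basis of $\overline\Gamma$ to an antichain of terms generating $\Gamma$, which is therefore the minimal monomial basis of $\Gamma$), so that only the combinatorial description of $B_{\overline\Gamma}$ remains. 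The delicate step is the identity $\Gamma=\Phi(\overline\Gamma)K[x_1,\dots,x_n]$: $\Phi$ does not commute with arbitrary ideal operations, and one must control the intersection of symbolic powers defining $\Gamma$.

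For that identity, both sides are monomial ideals of $S$, so it suffices to compare their terms. The ideal $(x_1^{\ell_1},\dots,\widehat{x_k^{\ell_k}},\dots,x_n^{\ell_n})^m$ is generated by the terms $\prod_{i\ne k}x_i^{\ell_i d_i}$ with $d_i\ge0$ and $\sum_{i\ne k}d_i=m$, so a term $x_1^{a_1}\cdots x_n^{a_n}$ lies in it exactly when such $d_i$ can be chosen with $\ell_i d_i\le a_i$, i.e. exactly when $\sum_{i\ne k}\lfloor a_i/\ell_i\rfloor\ge m$; intersecting over $k$, $x^a\in\Gamma$ iff $\sum_{i\ne k}\lfloor a_i/\ell_i\rfloor\ge m$ for every $k$. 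On the other hand $\Phi(\overline\Gamma)S$ is generated by the terms $x_1^{\ell_1 c_1}\cdots x_n^{\ell_n c_n}$ with $y_1^{c_1}\cdots y_n^{c_n}\in\overline\Gamma$, that is with $\sum_{i\ne k}c_i\ge m$ for all $k$; since $\sum_{i\ne k}c_i$ is monotone in $c$, a term $x^a$ is a multiple of such a generator iff the maximal admissible choice $c_i=\lfloor a_i/\ell_i\rfloor$ already works, i.e. iff $\sum_{i\ne k}\lfloor a_i/\ell_i\rfloor\ge m$ for all $k$. Thus $\Gamma$ and $\Phi(\overline\Gamma)S$ contain the same terms, hence coincide, and Theorem~\ref{th:betti weighted} applies.

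It remains to describe $B_{\overline\Gamma}$, which could be quoted from \cite[Theorem 2.4]{F2001} but is easy to obtain directly from the criterion just used. Writing $|c|=\sum_i c_i$, the term $y^c$ lies in $\overline\Gamma$ iff $c_k\le|c|-m$ for all $k$; since $m\ge1$ this forces $c\ne0$ and $t:=|c|-m\ge1$. Deleting a variable $y_j$ with $c_j\ge1$ leaves the coordinates $c_i$ ($i\ne j$) unchanged and lowers $|c|$ and $t$ each by one, so $y^c/y_j\notin\overline\Gamma$ iff some $c_i$ with $i\ne j$ equals $t$; imposing this for every $j$ with $c_j\ge1$ is, by a short argument using $c\ne0$ and $t\ge1$ (pick $j$ with $c_j\ge1$, get $i\ne j$ with $c_i=t$, then apply the condition to that $i$), equivalent to the existence of two indices $u<v$ with $c_u=c_v=t$, and for such $c$ one also gets $t\le m$ from $m+t=|c|\ge c_u+c_v=2t$. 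Hence $B_{\overline\Gamma}=\bigcup_{t=1}^m\{y_1^{c_1}\cdots y_n^{c_n}:\ \sum_i c_i=m+t,\ c_i\le t\ \forall i,\ \exists\,u<v\text{ with }c_u=c_v=t\}$; applying $\Phi$ and renaming $c$ as $b$ yields the asserted $B_\Gamma$. As flagged above, the only genuinely computational point is the term-by-term identity $\Gamma=\Phi(\overline\Gamma)K[x_1,\dots,x_n]$ in the second paragraph; everything else is bookkeeping, or a citation of \cite{F2001}.
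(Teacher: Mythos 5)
Your proposal is correct, and it is genuinely more self-contained than the paper's argument, which consists of two citations: the paper disposes of the identity $\Gamma=\Phi(\overline\Gamma)$ in one line, writing $\Phi\bigl(\cap_{k}(y_1,\dots,\hat y_k,\dots,y_n)^m\bigr)=\cap_k\Phi(y_1,\dots,\hat y_k,\dots,y_n)^m$ \emph{because $\Phi$ is an injective ring homomorphism}, and then quotes Fatabbi's description of the minimal basis of the ideal of $n$ general $m$-fat points (\cite[Theorem 2.4]{F2001}) for $B_{\overline\Gamma}$. Your term-by-term verification via the criterion $\sum_{i\ne k}\lfloor a_i/\ell_i\rfloor\ge m$ for all $k$ replaces the paper's one-liner and is arguably safer: injectivity alone does not in general make ideal extension commute with intersections (here one is implicitly also using that $S$ is free, hence flat, over the image subring $K[x_1^{\ell_1},\dots,x_n^{\ell_n}]$ --- the paper itself points to Hartshorne's flatness result in Section 5), whereas your monomial computation settles the equality directly and makes the monotonicity point (maximal admissible $c_i=\lfloor a_i/\ell_i\rfloor$) explicit. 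Likewise your combinatorial derivation of $B_{\overline\Gamma}$ (membership $\Leftrightarrow$ $c_k\le t:=|c|-m$ for all $k$; minimality $\Leftrightarrow$ two coordinates equal to $t$; $t\le m$ from $m+t\ge 2t$) is complete and replaces the external citation; the antichain remark correctly justifies that $\Phi(B_{\overline\Gamma})$ is the minimal basis of $\Gamma$, consistent with the $h=0$ case of Theorem~\ref{th:betti weighted}. What the paper's route buys is brevity and alignment with the literature on fat point schemes; what yours buys is a proof that is entirely internal to the weighted-ideal formalism and free of the flatness subtlety.
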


\begin{proof} Note that
$\Phi(\overline\Gamma)= \Phi(\cap_{k=1}^n (y_1,\dots,\hat y_k,\dots,y_n)^m)= \cap_{k=1}^n \Phi(y_1,\dots,\hat y_k,\dots,y_n)^m = \Gamma$,  
because $\Phi$ is an injective ring homomorphism. Then, we conclude thanks to the explicit description of $B_{\overline\Gamma}$ that is given in \cite[Theorem 2.4]{F2001}.  
\end{proof}

\begin{proposition}\label{prop:betti fat grids}
For every $h\in \{0,\dots,n-2\}$, the $h$-th total Betti number $\beta^{\Gamma}_h$ of the ideal $\Gamma$ and the $h$-th total Betti number $\beta^{\mathbb Y}_h$ of a $m$-fat complete grid of lines $\mathbb Y$ in $\mathbb P^n_K$ coincide with the $h$-th total Betti number of $n$ general $m$-fat points in $\mathbb P^{n-1}_K$. 

If $\mathbb Y$ is a $m$-fat complete grid of type $(1,\dots,1)$, then also the graded Betti numbers coincide with those of $n$ general $m$-fat points in $\mathbb P^{n-1}_K$.
\end{proposition}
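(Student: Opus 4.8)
The plan is to assemble the statement from the results already established for the monomial ideal $\Gamma$ and for its companion $\overline\Gamma$. First I would recall from Lemma~\ref{lemma:generatori fat grid} that $\Gamma$ is weighted by $(\ell_1,\dots,\ell_n)$ and that $\Gamma=\Phi(\overline\Gamma)S$, where $\overline\Gamma=\cap_{k=1}^n(y_1,\dots,\hat y_k,\dots,y_n)^m$ is the saturated ideal of the $m$-fat point scheme supported on the $n$ coordinate points of $\mathbb P^{n-1}_K$. Applying Theorem~\ref{th:betti weighted} to the pair $(\overline\Gamma,\Gamma)$ gives $\beta^{\Gamma}_h=\beta^{\overline\Gamma}_h$ for every $h$. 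On the other hand, Proposition~\ref{prop:acm e risoluzioni}(i) says that the total (indeed graded) Betti numbers of $\mathbb Y$ coincide with those of $\Gamma$, so $\beta^{\mathbb Y}_h=\beta^{\Gamma}_h=\beta^{\overline\Gamma}_h$ for every $h$.

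It then remains to identify $\beta^{\overline\Gamma}_h$ with the corresponding total Betti number of $n$ general $m$-fat points in $\mathbb P^{n-1}_K$. Here the key point, and the only one that needs an argument rather than a citation, is that the $n$ coordinate points of $\mathbb P^{n-1}_K$ are in linearly general position, hence \emph{general} in the sense fixed before Lemma~\ref{lemma:risoluzione punti}, and that $PGL_n(K)$ acts transitively on ordered $n$-tuples of points spanning $\mathbb P^{n-1}_K$ (choose representative vectors forming a basis of $K^n$ and send them to the canonical basis). Since a linear invertible change of coordinates carries the defining ideal of one such $m$-fat scheme onto that of another and preserves graded Betti numbers, $\overline\Gamma$ has the same graded, hence total, Betti numbers as the defining ideal of any $n$ general $m$-fat points in $\mathbb P^{n-1}_K$. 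Combining this with the previous paragraph yields the first assertion; the same argument, using Proposition~\ref{prop:properties pseudo}(ii) in place of Proposition~\ref{prop:acm e risoluzioni}(i), covers a $m$-fat complete pseudo-grid as well.

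For the final assertion, when $\ell_1=\dots=\ell_n=1$ the homomorphism $\Phi$ of \eqref{eq:monomorfismo} is merely the renaming $y_i\mapsto x_i$, so $\Gamma$ and $\overline\Gamma$ coincide as ideals up to the names of the variables and in particular share the same \emph{graded} Betti numbers. Together with Proposition~\ref{prop:acm e risoluzioni}(i) and the identification of $\overline\Gamma$ with $n$ general $m$-fat points obtained above, this shows that in this case the graded Betti numbers of $\mathbb Y$ agree with those of $n$ general $m$-fat points in $\mathbb P^{n-1}_K$.

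I do not anticipate a genuine obstacle: the proof is a chain of results proved earlier in the paper, and the only delicate step is the passage from the coordinate points to an arbitrary general configuration, which is classical and already used implicitly for $\overline\Omega$ before Lemma~\ref{lemma:risoluzione punti}. As an alternative to the transitivity argument one could invoke directly the explicit description of $B_{\overline\Gamma}$ for general points from \cite[Theorem 2.4]{F2001}, but the group-action argument keeps this step short and self-contained.
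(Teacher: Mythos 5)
Your proposal is correct and follows essentially the same route as the paper: it chains Proposition~\ref{prop:acm e risoluzioni}(i), Lemma~\ref{lemma:generatori fat grid} and Theorem~\ref{th:betti weighted}, then identifies $\overline\Gamma$ with $n$ general $m$-fat points via a linear change of coordinates, and handles the case $\ell_1=\dots=\ell_n=1$ by noting $\Gamma=\overline\Gamma$ up to renaming the variables. The only difference is that you spell out the $PGL_n$-transitivity argument that the paper leaves implicit ("as we have already recalled"), which is a harmless elaboration.
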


\begin{proof}
From Proposition \ref{prop:acm e risoluzioni}, the graded Betti numbers of $\mathbb Y$ and of the ideal $\Gamma$ coincide. In particular, for every $h\in \{0,\dots,n-2\}$, $\beta^{\mathbb Y}_h$ is equal to the $h$-th total Betti number of $\Gamma$. On the other hand, the $h$-th total Betti number of the ideal $\Gamma$ coincides with the $h$-th total  Betti number of the ideal $\overline{\Gamma}$, thanks to Lemma \ref{lemma:generatori fat grid} and Theorem~\ref{th:betti weighted}. We now can conclude because $\overline{\Gamma}$ is the defining ideal of the $m$-fat point scheme that is supported on the $n$ projective points $(1,0,\dots,0),\dots,(0,\dots,0,1)$ in $\mathbb P^{n-1}_K$ and a linear and invertible change of coordinates preserves Betti numbers, as we have already recalled. For the last statement, it is enough to observe that in this case $\overline{\Gamma}=\Gamma$, up to the name of the variables.
\end{proof}

\begin{example}\label{ex:dalle rette ai punti}
From \cite{G2014} the minimal free resolution of a $m$-fat complete grid $\mathbb Y$ of type $(1,1,1)$ in $\mathbb P^3_K$ is \  $0\rightarrow F_1 \rightarrow F_0 \rightarrow 0$ with:

$F_0= \bigoplus_{q=0}^{\frac{m-1}{2}} R^3(-q-2(m-q)),$

$F_1= \bigoplus_{q=1}^{\frac{m-1}{2}} R^3(-q-2(m-q+1))\bigoplus R^2(-3(\frac{m+1}{2}))$

\noindent if $m$ is odd, and with

$F_0= \bigoplus_{q=0}^{\frac{m-2}{2}} R^3(-q-2(m-q)) \bigoplus R(-3\frac{m}{2}),$

$F_1= \bigoplus_{q=1}^{\frac{m-2}{2}} R^3(-q-2(m-q+1)) \bigoplus R^3(-2-3(\frac{m}{2}))$

\noindent if $m$ is even. Then, thanks to Proposition \ref{prop:betti fat grids}, the minimal free resolution of a $m$-fat point scheme $Z$ in $\mathbb P^2_K$ supported on $3$ general points is (see \cite{Catalisano} for a more general result over an algebraically closed field): 
\begin{itemize}
\item $0 \rightarrow \oplus_{i=1}^{\frac{m-1}{2}}P^3(-\alpha-1-i) \oplus P^2(-\alpha-1) \rightarrow \oplus_{i=0}^{\frac{m-1}{2}} P^3 (-\alpha-i) \rightarrow 0$, 

\noindent where $\alpha=m+\frac{m+1}{2}$, if $m$ is odd;
\item $0 \rightarrow \oplus_{i=1}^{\frac{m}{2}}P^3(-\alpha-i-1) \rightarrow \oplus_{i=1}^{\frac{m}{2}} P^3 (-\alpha-i) \oplus P(-\alpha) \rightarrow 0$, 

\noindent where $\alpha=m+\frac{m}{2}$, if $m$ is even.
\end{itemize}
\end{example}

We can find descriptions of the graded Betti numbers of the $m$-th symbolic power ${I_{d,n}}^{(m)}$ in several cases (see \cite[Corollaries 4.4, 4.5, 4.6]{BAG+19}, \cite[Theorem 3.2]{GHM13}, \cite[Proposition 3.2]{Francisco2005}). Hence, in the cases for which $d=2$, we also have the graded Betti numbers of the $m$-fat point scheme supported on $n$ general points in $\mathbb P^{n-1}_K$, but only the total Betti numbers of fat grids and pseudo-grids. See also \cite[Theorem~3.6(2)]{GHMN17}.

%%%%%%%%%%%%%%%%%%%%%%%%%%%%%%%%%%%%%%%%%%%%%%%%%%%%%%%%%%%%%%%%
%% Minimal free resolution of a complete grid and pseudo-grid %%
%%%%%%%%%%%%%%%%%%%%%%%%%%%%%%%%%%%%%%%%%%%%%%%%%%%%%%%%%%%%%%%%

\section{Minimal free resolution of a complete grid and pseudo-grid}\label{sec:mapping cone}

In this section, referring to \cite{CEv,HT,PS}, we apply an iterated mapping cone construction in order to obtain a free resolution of a complete grid or pseudo-grid that is minimal thanks to the results of Section \ref{sec:weighted ideals}. First, we recall the definition of mapping cone and the idea that supports the iterated procedure.

\begin{definition} \cite[Appendix A3]{Ei} 
If $ \alpha :  {\bf E} \ra {\bf F}$ is a map of complexes and we write $\delta$ and $\varphi$, respectively, for the differentials of ${\bf E}$  and $ {\bf F}$, then the {\em mapping cone} of $\alpha$ is the complex ${\bf G}$ with \, $G_i := E_{i-1} \oplus F_i$ and differentials \, $\gamma _{i} : G_{i} = E_{i-1} \oplus F_{i} \ra G_{i-1}= E_{i-2} \oplus F_{i-1}$ defined by \, $\gamma_{i}(a,b) := (- \delta_{i-1}(a), \, \alpha_{i-1}(a) + \varphi_{i}(b))$.
\end{definition}

Let $I \subset R=K[x_0,x_1,\dots,x_n]$ be a homogeneous ideal generated by $f_1, \ldots, f_m$. For every $c\in\{1,\dots,m\}$ set $I_{c} := (f_{c}, \ldots, f_m)$ and recall that there are the following short exact sequences with homomorphisms of degree $0$:
$$ 0 \longrightarrow \frac{R}{(I_{c+1}: (f_{c}))} \, (- \, deg(f_{c}))  \stackrel{\cdot f_{c}}{\longrightarrow}  \frac{R}{I_{c+1}} \longrightarrow \frac{R} {(f_{c})+I_{c+1}} \longrightarrow 0.$$
If a free $R$-resolution ${\bf E} $ of $ R/ (I_{c+1}: (f_{c}))$ and a free $R$-resolution ${\bf F}$  of $ R/ I_{c+1}$ are known, then we can find  a complex map $ \alpha :  {\bf E}  \, (- \, deg(f_{c})) \ra {\bf F}$ which is a lifting of the multiplication by $f_{c}$ and whose comparison maps $\alpha_i$ are of degree $0$. We now recall how to find such a map $\alpha$:
$$\begin{array}{ccccccccl}
\cdots&\ra& E_2 (- deg(f_{c}))&  \stackrel{\delta_2}{\ra} & E_1 (-  deg(f_{c})) &  \stackrel{\delta_1}{\ra} & E_0 (-  deg(f_{c}))&  \stackrel{\delta_0}{\ra} & R (- deg(f_{c})) \\
 & & \downarrow  \alpha_2 & &  \downarrow  \alpha_1 & &  \downarrow  \alpha_0 & &  \downarrow  \cdot \, f_{c} \\
\cdots&\ra& F_2 &  \stackrel{\varphi_2}{\ra} & F_1  &  \stackrel{\varphi_1}{\ra} & F_0 & \stackrel{\varphi_0}{\ra} & R 
\end{array}$$
Let $\bar a$ be an element of a minimal system of generators of $E_0 (-  deg(f_{c}))$, we have $\delta_0(\bar a) \in (I_{c+1} : (f_{c}))$ and $ \delta_0(\bar a) f_{c} \in I_{c+1}$. Thus there exists $b \in F_0$ such that $\varphi _0 (b) = \delta_0(\bar a) f_{c}$ and we can define $\alpha_0(\bar a)=b$.

Now, let $a$ be an element of a minimal system of generators of $E_1 (-  deg(f_{c}))$, we have $\delta_0 \circ \delta_1 (a) = 0$ in $R$, then $ \varphi_0 \circ \alpha_0 \circ \delta_1 (a) =0$ and $ \alpha_0 \circ \delta_1 (a) \in Ker \varphi_0 = Im \varphi_1$. So there exists $b \in F_1$ such that $\varphi_1 (b) =  \alpha_0 \circ \delta_1 (a)$ and we define $\alpha_1(a)=b$. 

And so on, we continue this construction obtaining a free resolution of  $R/I_{c}$,  where $I_{c} =  (f_{c})+I_{c+1}$, as a mapping cone of the complex map $\alpha$:
$$\cdots \ra  E_1 (-  deg(f_{c})) \oplus F_2  \stackrel{\gamma_2}{\ra}  E_0 (-  deg(f_{c})) \oplus F_1  \stackrel{\gamma_1}{\ra}  R (- deg(f_{c})) \oplus F_0  \stackrel{\gamma_0}{\ra} R \ra \frac{R}{(f_{c})+I_{c+1}}.$$

Note that the map $\gamma_0$ is given by the generators $f_{c}, \ldots, f_n$ of the ideal $I_{c}$. Moreover, for every $i \geq 1$, the matrix $M_{\gamma_i}$ associated to the map $\gamma _i$ (with respect to the canonical bases) is 
\begin{equation}\label{eq:matrice sizigie} 
M_{\gamma_i} = \left(
\begin{array}{cc}
- \, M_{\delta_{i-1}} & 0 \\
M_{\alpha_{i-1}} & M_{\varphi_{i}}\\
\end{array} \right).
\end{equation}
Such a resolution can be non-minimal. If $Im ( \alpha _i) \subseteq \gm \, F_i$, that is the entries of the matrix $M_{\alpha_i}$ belong to the irrelevant maximal ideal $\gm$, then the free resolution obtained as mapping cone of $\alpha$ is minimal. However,  this construction yields an inductive procedure to compute a resolution of $R/I$.

\begin{remark}\rm
Suppose that for the above minimal free resolutions ${\bf E}, {\bf F}$ we have \, $E_i = \oplus _j R(- \epsilon _{ij})$, $F_i = \oplus _j R(- \lambda _{ij})$.
If $ \epsilon _{ij} + deg(f_{c}) > \lambda _{ih}$ for all $i,j,h$, then the above free resolution mapping cone of $\alpha$ is minimal (see \cite[Appendix A3]{Ei}). In fact $\epsilon _{ij} + deg(f_{c})$ is the degree of the $j$-th element $e_j$ of the canonical basis of $E_i(-deg(f_{c}))$, $ \alpha_i(e_j)= ( \ldots, \phi_{hj}, \ldots)$ and every $ \phi_{hj}$ is of degree  $ \epsilon _{ij} + deg(f_{c})$ in $R(-\lambda _{ih})$. Note that $ \phi_{hj} \in \gm =R_1$ if and only if  $ \epsilon _{ij} + deg(f_{c}) > \lambda_{ih}$. In literature there are also other criteria for the minimality of a resolution that is obtained by mapping cone (e.g.~\cite[Lemma 2.3]{Francisco2005}). However, up to our knowledge, they do not apply to the case we are considering.
\end{remark}

Now, we go back to our ideal $\Omega= ( x_1^{\ell_1} x_2^{\ell_2}, x_1^{\ell_1}x_3^{\ell_3}, \ldots , x_1^{\ell_1}x_n^{\ell_n}, \ldots , x_{n-1}^{\ell_{n-1}}x_n^{\ell_n}) \subset S$. Let $f_1,\dots,f_n$ be a regular sequence of homogeneous forms in $R$ with degrees $\ell_1,\dots,\ell_n$ and let $\tilde\Omega:=(f_1 f_2, f_1 f_3, \ldots , f_1 f_n, \dots, f_{n-1} f_n)\subseteq R$.

For every $1 \leq k \leq n$, we consider the ideal $\tilde\Omega_{[k]}$ generated by the polynomials  $f_i f_j$ with $1\leq k \leq i < j \leq n$. Observe that $\tilde\Omega= \tilde\Omega_{[1]}$ and moreover
$$\tilde\Omega = ( f_1 f_2, f_1 f_3, \dots, f_1 f_n) + \tilde\Omega_{[2]}.$$
Since $f_1,\dots,f_n$ is a regular sequence of homogeneous forms, we obtain:
$$(\tilde\Omega_{[2]} : ( f_1f_n)) =  (f_2, \dots , f_{n-1}) =: \gb_n,$$
$$((f_1 f_n) + \tilde\Omega_{[2]}) : (f_1 f_{n-1})) = (f_2, \dots,\widehat{f_{n-1}}, f_n) =: \gb_{n-1},$$
and, for every $3 \leq k \leq n$,
$$(( f_1f_k, \dots, f_1f_n) + \tilde\Omega_{[2]} ) : ( f_1 f_{k-1})) = (f_2, \dots ,\widehat{ f_{k-1}}, \dots, f_n)  =: \gb_{k-1}.$$
An ideal of type $\gb := (f_{1}, f_{2}, \dots, f_{n-2})$ is a complete intersection and, hence, its minimal free resolution is
$0 \ra E_{n-3} \ra E_{n-4} \ra \cdots \ra E_1 \ra E_0 \ra \gb \ra 0 $, where

$E_0 = \oplus _{1 \leq i \leq n-2}     S(-\ell_{i})$,

$E_1 = \oplus _{1 \leq i_1 < i_2  \leq n-2}         S (-\ell_{i_1} -\ell_{i_2} )$,

\hskip 1cm \vdots

$E_{n-4} = \oplus_{i=1, \ldots , n-2}        S (-\ell_1 - \cdots - \widehat{\ell_i} - \cdots - \ell_{n-2})$,

$E_{n-3} = S (-\ell_1 \ldots -\ell_{n-2})$.

\begin{theorem} \label{th:risoluzione Omega}
The ideal $\tilde\Omega$ has a free resolution of type $0 \rightarrow F_{n-2} \rightarrow F_{n-3} \rightarrow \cdots \rightarrow F_2 \rightarrow F_1 \rightarrow F_0 \rightarrow  \tilde\Omega \rightarrow 0$ with modules

$F_0 = \oplus _{1 \leq i_1<i_2 \leq n} R(-\ell_{i_1} -\ell_{i_2})$,

$F_1 = \oplus _{1 \leq i_1 < i_2 < i_3 \leq n} R^2 (-\ell_{i_1} -\ell_{i_2} -\ell_{i_3})$,

$\vdots$ 

$F_h = \oplus _{1 \leq i_1 < i_2 < \dots < i_{h+2} \leq n}  R^{h+1} (-\ell_{i_1} -\ell_{i_2} -\dots -\ell_{i_{h+2}})$,

$\vdots$

$F_{n-3} = \oplus_{i=1, \ldots , n}  R^{n-2} (-\ell_1 - \cdots - \widehat{\ell_i} - \cdots - \ell_n)$,

$F_{n-2} = R^{n-1} (-\ell_1 \ldots -\ell_n)$,

\noindent which is obtained by a mapping cone procedure that also gives the maps of the resolution by  \eqref{eq:matrice sizigie}.
\end{theorem}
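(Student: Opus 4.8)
The plan is to prove the statement by induction on $n$; concretely, I would prove by induction on $N\ge 2$ that for \emph{any} regular sequence $g_1,\dots,g_N$ of homogeneous forms of positive degrees $\ell_1,\dots,\ell_N$ in a polynomial ring over $K$, the ideal $(g_ig_j\mid 1\le i<j\le N)$ has a free resolution of the displayed shape with $n$ replaced by $N$, together with maps given by $\eqref{eq:matrice sizigie}$; the case $N=n$, $g_j=f_j$, is the theorem. For $N=2$ the ideal is principal and $0\ra R(-\ell_1-\ell_2)\ra R\ra(g_1g_2)\ra 0$ is already of the required form.

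Assume the assertion for $N-1$. Applied to the regular sequence $g_2,\dots,g_N$, it gives a free resolution $0\ra G_{N-3}\ra\cdots\ra G_0\ra\tilde\Omega_{[2]}\ra 0$ of $\tilde\Omega_{[2]}=(g_ig_j\mid 2\le i<j\le N)$, with known maps and with $G_h=\bigoplus_{2\le i_1<\cdots<i_{h+2}\le N}R^{h+1}(-\ell_{i_1}-\cdots-\ell_{i_{h+2}})$. Since $\tilde\Omega=(g_1g_N,g_1g_{N-1},\dots,g_1g_2)+\tilde\Omega_{[2]}$, I would recover $\tilde\Omega$ from $\tilde\Omega_{[2]}$ by adjoining the $N-1$ generators $g_1g_N,g_1g_{N-1},\dots,g_1g_2$ in this order through $N-1$ consecutive mapping cones, as recalled before the statement. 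At the $t$-th step ($1\le t\le N-1$) one passes from $J_{t-1}:=(g_1g_{N-t+2},\dots,g_1g_N)+\tilde\Omega_{[2]}$ (so $J_0=\tilde\Omega_{[2]}$, $J_{N-1}=\tilde\Omega$) to $J_t:=(g_1g_{N-t+1})+J_{t-1}$; by the computation recorded just before the statement (valid verbatim for $g_1,\dots,g_N$) the relevant colon ideal is the complete intersection $\gb:=(g_j\mid j\in\{2,\dots,N\}\setminus\{N-t+1\})$ of $N-2$ forms, with minimal free resolution the truncated Koszul complex $\mathbf E^{(t)}$, $E^{(t)}_h=\bigoplus_{\sigma}R\bigl(-\sum_{j\in\sigma}\ell_j\bigr)$, the sum over the $(h+1)$-subsets $\sigma$ of $\{2,\dots,N\}\setminus\{N-t+1\}$, for $h=0,\dots,N-3$. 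Lifting multiplication by $g_1g_{N-t+1}$, a form of degree $\ell_1+\ell_{N-t+1}$, to a chain map $\mathbf E^{(t)}(-\ell_1-\ell_{N-t+1})\ra\mathbf F^{(t-1)}$, where $\mathbf F^{(t-1)}$ is the resolution of $J_{t-1}$ produced so far ($\mathbf F^{(0)}=\mathbf G$), and forming the mapping cone yields a free resolution $\mathbf F^{(t)}$ of $J_t$ whose homological degree-$0$ part gains the summand $R(-\ell_1-\ell_{N-t+1})$ and whose homological degree-$i$ part, $i\ge 1$, gains the summand $E^{(t)}_{i-1}(-\ell_1-\ell_{N-t+1})$; its differentials are the block matrices $\eqref{eq:matrice sizigie}$ assembled from the Koszul differentials of $\mathbf E^{(t)}$, the already known differentials of $\mathbf F^{(t-1)}$, and the comparison map, built by the lifting procedure recalled above.

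After the $N-1$ steps one obtains a free resolution $\mathbf F:=\mathbf F^{(N-1)}$ of $\tilde\Omega=J_{N-1}$, and it remains to identify its modules with the $F_h$ of the statement. In homological degree $h$ one has
\[
F_h\;=\;G_h\;\oplus\;\bigoplus_{t=1}^{N-1}E^{(t)}_{h-1}(-\ell_1-\ell_{N-t+1}).
\]
The first summand contributes, with multiplicity $h+1$, one copy of $R(-\sum_{i\in T}\ell_i)$ for each $(h+2)$-subset $T$ of $\{1,\dots,N\}$ not containing $1$; the second contributes one copy of $R(-\sum_{i\in T}\ell_i)$ for each pair $(t,\sigma)$ with $\sigma$ an $h$-subset of $\{2,\dots,N\}\setminus\{N-t+1\}$, where $T=\{1,N-t+1\}\cup\sigma$, and a given $(h+2)$-subset $T\ni 1$ arises in exactly $h+1$ of these ways, once for each choice of which element $j$ of $T\setminus\{1\}$ plays the role of the index $N-t+1$ (namely at the step $t=N-j+1$, via the Koszul summand indexed by $\sigma=T\setminus\{1,j\}$). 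Hence each of the $\binom{N}{h+2}$ subsets $T$ contributes $R^{h+1}(-\sum_{i\in T}\ell_i)$, which is exactly $F_h$; for $h=0$ this is the list of the $\binom{N}{2}$ generators of $\tilde\Omega$, and for $h=N-2$ it is $R^{N-1}(-\ell_1-\cdots-\ell_N)$, since $G_{N-2}=0$ and each of the $N-1$ steps contributes $E^{(t)}_{N-3}(-\ell_1-\ell_{N-t+1})=R(-\ell_1-\cdots-\ell_N)$. This closes the induction.

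I expect the main obstacle to be the combinatorial bookkeeping of this last paragraph: one must check that over the $N-1$ mapping cones no Koszul summand of the successive complete intersections $\gb$ is lost and that these summands recombine to give precisely multiplicity $h+1$ on every $(h+2)$-subset — and, upstream of that, the straightforward but essential verification that the successive colon ideals really are the asserted complete intersections, already recorded before the statement. The mapping cone construction itself, its exactness, and the block form $\eqref{eq:matrice sizigie}$ of its differentials are then immediate from the recollections preceding the theorem; note that the resolution so produced need not a priori be minimal, minimality being obtained afterwards from the Betti-number computations of Section~\ref{sec:weighted ideals}.
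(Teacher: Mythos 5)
Your proposal is correct and takes essentially the same route as the paper's own proof: induction on $n$, with the inductive resolution of $\tilde\Omega_{[2]}$ for the regular sequence $f_2,\dots,f_n$, followed by successively adjoining $f_1f_n, f_1f_{n-1},\dots,f_1f_2$ via mapping cones over the complete-intersection colon ideals $\gb_k$ and their Koszul resolutions, with maps given by \eqref{eq:matrice sizigie}. Your explicit bookkeeping that every $(h+2)$-subset of indices (whether or not it contains $1$) ends up with multiplicity $h+1$ merely makes precise what the paper compresses into ``analogously, we successively add the other generators,'' and your remark that minimality is not claimed at this stage matches the paper, which defers it to Corollary~\ref{cor:congettura}.
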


\begin{proof}
We argue by induction on $n$. 
Let 

$\tilde\Omega_1:=\tilde\Omega$, 

$\tilde\Omega_2 :=  (f_1 f_3, \dots, f_1 f_n, \dots, f_{n-1} f_n)$, 

$\vdots$

$\tilde\Omega_{k-1} :=  (f_1 f_k, \dots, f_1 f_n, \dots, f_{n-1} f_n)$, 

$\vdots$

$\tilde\Omega_n := ( f_2 f_3, \dots, f_2 f_n, \dots, f_{n-1} f_n) = \tilde\Omega_{[2]}$.

\noindent For $n=2$ we have $\tilde\Omega=(f_1 f_2)$ and the statement holds because there exists the exact sequence \  
$0 \ra R(-\ell_1 -\ell_2) \ra \tilde\Omega \ra 0$.
Now, assume that the statement holds for $n-1$.

We have $\tilde\Omega=( f_1 f_2, f_1 f_3, \dots, f_1 f_n) + \tilde\Omega_{[2]}$. By the inductive hypothesis, the minimal free resolution of $\tilde\Omega_{[2]}$ is ${\bf F_n}: \ 0 \rightarrow F_{n,n-3}\dots \rightarrow F_{n,1} \rightarrow F_{n,0} \rightarrow 0$, where:

$F_{n,0} = \oplus_{2 \leq i_1<i_2 \leq n} R(-\ell_{i_1} -\ell_{i_2})$,

$F_{n,1} = \oplus_{2 \leq i_1 < i_2 < i_3 \leq n} R^2 (-\ell_{i_1} -\ell_{i_2} -\ell_{i_3})$,

$\vdots$

$F_{n,n-3} = \oplus_{i=2, \ldots , n}  R^{n-2} (-\ell_1 - \cdots - \widehat{\ell_i} - \cdots - \ell_n)$.

\noindent Moreover, the ideal $(\tilde\Omega_n : (f_1 f_n))$ coincides with $\gb _{n} := ( f_2, \dots, f_{n-1})$, so its minimal free resolution ${\bf E_n}$ is \ $0 \ra E_{n-3} \ra E_{n-4} \ra \dots \ra E_1 \ra E_0 \ra \gb \ra 0 $, where

$E_0 = \oplus_{2 \leq i \leq n-1} R(-\ell_{i})$,

$E_1 = \oplus_{2\leq i_1 < i_2  \leq n-1} R(-\ell_{i_1} -\ell_{i_2} )$,

\hskip 1cm \vdots

$E_{n-4} = \oplus_{i=2, \ldots , n-1}  R(-\ell_2 - \cdots - \widehat{\ell_i} - \dots - \ell_{n-1})$,

$E_{n-3} = R (-\ell_2 \ldots -\ell_{n-1})$.
\vskip 1mm
\noindent Thus, the mapping cone of ${\bf E_n}(-\ell_1 - \ell_n) \ra {\bf F_n}$ that is  induced by the multiplication by $f_1 f_n$ gives the following free resolution ${\bf F_{n-1}}$ of $\tilde\Omega_{n-1} = (f_1 f_n) + \tilde\Omega_n$:

$F_{n-1,0}= R(-\ell_1 - \ell_n) \oplus F_{n,0}$.

$F_{n-1,1}= E_{0}(-\ell_1 - \ell_n) \oplus F_{n,1}$. 

$F_{n-1,2}= E_{1}(-\ell_1 - \ell_n) \oplus F_{n,2}$. 

$\hskip 5mm \vdots$

$F_{n-1,n-2}=  E_{n-3}(-\ell_1 - \ell_n) = R(-\ell_2 - \cdots -\ell_{n-1 } -\ell_1 -\ell_n)$.

\noindent Analogously, we successively add the other generators $f_1 f_{n-1}$, $\dots$, $f_1 f_{2}$ and find a free resolution of $\tilde\Omega$ as it is described in the statement. 
\end{proof}

\begin{corollary} \label{cor:congettura}
A complete grid (or pseudo-grid) of type $(\ell_1,\dots,\ell_n)$ in $\mathbb P^n_K$ has minimal free resolution $0 \rightarrow F_{n-2} \rightarrow F_{n-3} \rightarrow \cdots \rightarrow F_2 \rightarrow F_1 \rightarrow F_0 \rightarrow I \rightarrow 0$ like in \eqref{eq:conjecture} with maps given by \eqref{eq:matrice sizigie}.
\end{corollary}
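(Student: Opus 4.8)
The plan is to derive the statement directly from Theorem~\ref{th:risoluzione Omega} together with the Betti-number computations of Section~\ref{sec:weighted ideals}. First I would record that, by Theorem~\ref{th:lifting complete grid}(ii) in the grid case and by Definition~\ref{def:pseudo}(1) together with the description $B_\Omega=\{x_i^{\ell_i}x_j^{\ell_j}:1\le i<j\le n\}$ in the pseudo-grid case, the defining ideal $I$ of a complete grid or pseudo-grid of type $(\ell_1,\dots,\ell_n)$ is precisely the ideal $\tilde\Omega=(f_if_j:1\le i<j\le n)$, where $f_j=\prod_{i=1}^{\ell_j}L_{j,i}$ is the product of the linear forms in the $j$-th row of the corresponding $1$-lifting (resp. pseudo-$1$-lifting) matrix $A=(L_{j,i})$; this identification is the one already made at the end of Section~\ref{sec:grids}, and condition $(\alpha)$ of Remark~\ref{rem:conditions on lifting} (equivalently, the hypersurface-configuration property) guarantees that $f_1,\dots,f_n$ is a regular sequence of homogeneous forms of degrees $\ell_1,\dots,\ell_n$. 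Hence Theorem~\ref{th:risoluzione Omega} applies verbatim to $I=\tilde\Omega$ and produces a free resolution $0\to F_{n-2}\to\cdots\to F_0\to I\to 0$ with the modules displayed in \eqref{eq:conjecture} and with maps of the block shape \eqref{eq:matrice sizigie} coming from the iterated mapping cone.

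It then remains to verify that this particular free resolution is \emph{minimal}; since the minimal free resolution is unique up to isomorphism, this identifies it with the resolution in the statement, maps included. Here I would not attempt to check directly that all the comparison maps $\alpha_i$ of the mapping cone have entries in the maximal ideal --- the naive degree criterion recalled after the definition of mapping cone fails as soon as the $\ell_i$ are not all equal --- but argue by counting ranks. From the multi-index description one reads off $\operatorname{rank}F_h=(h+1)\binom{n}{h+2}$ for every $h\in\{0,\dots,n-2\}$. On the other hand, by Proposition~\ref{prop:acm e risoluzioni}(i) in the grid case (resp. Proposition~\ref{prop:properties pseudo}(ii) in the pseudo-grid case) the graded Betti numbers of $I$ coincide with those of $\Omega$, and by Proposition~\ref{prop:betti grids} the $h$-th total Betti number of $\Omega$ equals $(h+1)\binom{n}{h+2}$. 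Thus the $h$-th total Betti number $\beta^I_h$ equals $\operatorname{rank}F_h$ for every $h$. Since in any graded free resolution the rank of each free module in each degree is at least the corresponding graded Betti number, and here the \emph{total} ranks already agree with the total Betti numbers, all graded ranks must agree with the graded Betti numbers in every degree; therefore the resolution is minimal, and it is the one in \eqref{eq:conjecture} with maps \eqref{eq:matrice sizigie}.

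The step I expect to be the genuine obstacle is exactly this minimality argument: the iterated mapping cone of Theorem~\ref{th:risoluzione Omega} only guarantees \emph{a} resolution, and a priori it could contain unit entries forcing cancellations, so the conclusion rests essentially on the fact --- obtained through the weighted-ideal machinery of Section~\ref{sec:weighted ideals} (Theorem~\ref{th:betti weighted}, Lemma~\ref{lemma:risoluzione punti}, Proposition~\ref{prop:betti grids}) --- that the total Betti numbers of $\Omega$, hence of $I$, are already as small as the ranks appearing in \eqref{eq:conjecture}. Everything else is routine bookkeeping: matching the truncated products $\prod_{i=1}^{\ell_j}L_{j,i}$ with the forms $f_j$ of Theorem~\ref{th:risoluzione Omega}, and reading off $\operatorname{rank}F_h=(h+1)\binom{n}{h+2}$ from the indexing of $F_h$.
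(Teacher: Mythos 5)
Your proposal is correct and follows essentially the same route as the paper's own proof: identify $I$ with $\tilde\Omega=(f_if_j)$ for the regular sequence $f_j=\prod_{i=1}^{\ell_j}L_{j,i}$ guaranteed by condition $(\alpha)$, apply Theorem~\ref{th:risoluzione Omega}, and then obtain minimality by comparing the ranks $(h+1)\binom{n}{h+2}$ of the constructed resolution with the total Betti numbers of $I$ coming from Propositions~\ref{prop:acm e risoluzioni}, \ref{prop:properties pseudo} and \ref{prop:betti grids}, using the degreewise inequality $\beta^I_{hj}\le\beta_{hj}$. The only quibble is your side remark that the degree criterion for mapping-cone minimality ``fails as soon as the $\ell_i$ are not all equal,'' which is an overstatement, but it plays no role in the argument.
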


\begin{proof}
Let $A=(L_{j,i})$ be the $1$-lifting (or $1$-pseudo-lifting) matrix determining the ideal $I$ of the given complete grid (or complete pseudo-grid). Then, consider the homogeneous forms $f_1:=\prod_{i=1}^{\ell_1} L_{1,i},\dots,f_n:=\prod_{i=1}^{\ell_n} L_{n,i}$, which is a regular sequence thanks to condition $(\alpha)$ (see also \cite[Proposition 1.7(b)]{CMR2005}.  
It is enough to prove that the resolution \eqref{eq:conjecture} is minimal for these polynomials.

Observe that, if $\beta_h$ and $\beta_{hj}$ denote, respectively, the total Betti numbers and the graded Betti numbers of that free resolution, then it is easy to compute $\sum_j \beta_{hj}= \beta_h=(h+1)\binom{n}{h+2}$, for every $h \in \{0,\dots,n-2\}$.

For every $h \in \{0,\dots,n-2\}$, denote by $\beta_{hj}^I$ the graded Betti numbers and by $\beta_h^I$ the total Betti numbers of $I$.
Thanks to Proposition~\ref{prop:betti grids}, we have $\beta_h^I=(h+1)\binom{n}{h+2}$, and hence $\beta_h=\beta_h^I$.

On the other hand, we have $\beta_h^I = \sum_j \beta^I_{hj}\leq \sum_j \beta_{hj}= \beta_h$, for every $h \in \{0,\dots,n-2\}$. Moreover, we obtain $\beta^I_{hj}\leq \beta_{hj}$, for every graded Betti number, thanks to the well-known fact that the number of minimal generators of a given degree for finitely generated standard graded $K$-algebras is an invariant, by Nakayama Lemma. 
Hence, we obtain $\beta^I_{hj}= \beta_{hj}$, for every graded Betti number, and so the free resolution of $\Omega$ that we have constructed by the mapping cone is minimal.
\end{proof}

We observe that Theorem \ref{th:risoluzione Omega} and Corollary \ref{cor:congettura} do not need that the lifting matrix satisfies condition $(\alpha')$, because condition $(\alpha)$ is sufficient. In fact, although Corollary \ref{cor:congettura} is stated with the assumption that condition $(\alpha')$ is satisfied, this condition is never used in the proof. For example, Theorem \ref{th:risoluzione Omega} and Corollary \ref{cor:congettura} apply also to the case  $f_1=x_1^{\ell_1}, \dots, f_n=x_n^{\ell_n}$. In the Appendix, we describe an example of computation of the minimal free resolution with the maps in this case.

%%%%%%%%%%%%%%%%%%%%%%%%%%%%%%%%%%%%%%%%%%%%%
%% Seminormality of complete pseudo-grids  %%
%%%%%%%%%%%%%%%%%%%%%%%%%%%%%%%%%%%%%%%%%%%%%

\section{Seminormality of complete pseudo-grids}
\label{sec:seminormality}

The results that have been presented until now are significant for $n\geq 3$, although they also hold for $n=2$. In this section, we consider the property of seminormality which is significant for every $n\geq 2$. 

Recall that an algebraic variety is seminormal at a point $p$ if the local ring $\mathcal O_{X,p}$ is seminormal. If $X$ is seminormal at every point, then $X$ is called seminormal. For definitions and properties of a seminormal ring we refer to \cite{Traverso, GrecoTraverso}.

In \cite[Theorem 26]{GO2007} it is highlighted that a complete grid of lines of type $(\ell_1,\ell_2,\ell_3)$ in $\mathbb P^3_K$, with $\ell_1,\ell_2,\ell_3 \geq 2$, is not seminormal. We now investigate the same problem for complete grid of lines in $\mathbb P^n_K$, for every $n\geq 2$, and observe that a complete pseudo-grid is seminormal if it is obtained by a pseudo-$1$-lifting matrix that satisfies condition ($\alpha'$) and the further next condition. 

\begin{definition}\label{def:condition beta}
For a pseudo-$t$-lifting matrix $A$ we can consider the condition:
\begin{itemize} 
\item[($\beta$)] for any choice of $k$ entries of $A$, say $L_1,\dots,L_k\in R$ from $A$, $\dim_K\langle L_1,\dots,L_k \rangle = \min\{n+t,k\}$ (see \cite[Remark 3.2]{MiNa}).
\end{itemize}
\end{definition}

\begin{remark}\cite[Remark 1.9(c)]{CMR2005}
For $r>t+1$, condition ($\beta$) does not make sense for a $t$-lifting matrix.
\end{remark}

\begin{example}
The $1$-pseudo-lifting matrix \eqref{eq:pseudo} considered in Example \ref{ex:pseudo lifting} satisfies condition $(\alpha')$ but does not satisfy condition $(\beta)$, because the vector space $\langle x_2-3x_0, x_2+x_1+3x_0, x_3-3x_0, x_3-3x_2-x_1\rangle$ has dimension $3<4$. 
\end{example}

\begin{proposition}\label{prop:seminormality}
\vspace{-2mm}
\begin{itemize}
\item[(i)]  A complete grid of lines $Y\subset\mathbb P^n_K$ of type $(\ell_1,\dots,\ell_n)$, with $\ell_1\dots \ell_{k-1}\ell_{k+1}\dots \ell_n> n$ for some $k$, is not seminormal, for every $n\geq 2$. 
\item[(ii)]  A complete pseudo-grid of lines in $\mathbb P^n_K$ that is obtained by a pseudo-$1$-lifting matrix satisfying conditions ($\beta$) is a seminormal configuration of lines, for every $n\geq 2$.
\end{itemize}
\end{proposition}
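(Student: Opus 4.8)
```latex
\textbf{Proof plan.}

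The plan is to treat the two parts separately, each reducing to a concrete local computation on a pseudo-lifting of the complete intersection $\mathfrak b=(x_1^{\ell_1},\dots,x_n^{\ell_n})$ (for part (i)) and on the lines themselves (for part (ii)). For part (i), the strategy mimics the $n=3$ argument of \cite[Theorem 26]{GO2007}: seminormality is a local property, so I would localise at a suitable singular point of $Y$. By Remark~\ref{rem:confronto}(iii) and Proposition~\ref{prop:symbolic power} the defining ideal $I$ of $Y$ is the intersection of the primes $(L_{1,i_1},\dots,\widehat{L_{k,i_k}},\dots,L_{n,i_n})$, so the lines fall into $n$ families, the $k$-th family consisting of $\ell_1\cdots\widehat{\ell_k}\cdots\ell_n$ mutually skew lines all passing in the direction $p_{\infty,k}$. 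Pick the index $k$ for which $\ell_1\cdots\widehat{\ell_k}\cdots\ell_n>n$. These lines all meet the hyperplane $\{x_k=0\}$, and after dehomogenising one sees a configuration of more than $n$ affine lines through the origin (a ``coordinate cone''): pass to the cone over $\ell_1\cdots\widehat{\ell_k}\cdots\ell_n$ points and use that a cone over $s>n$ general points in $\mathbb P^{n-1}_K$ is not seminormal at the vertex — equivalently, the conductor computation shows the seminormalisation strictly enlarges the local ring because the arithmetic genus / the $H^0$ of the structure sheaf jumps. The key inequality $\ell_1\cdots\widehat{\ell_k}\cdots\ell_n>n$ is exactly what forces the points to impose more than the ``independent'' number of conditions, breaking seminormality; for $n=3$, $\ell_1\ell_2>3$ recovers the condition $\ell_1,\ell_2\ge 2$ (with $\ell_3\ge 2$) of \cite{GO2007} after permuting indices appropriately.

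For part (ii), the plan is to show that under condition $(\beta)$ the configuration $Y^p$ has \emph{ordinary} (i.e.\ the local ring is analytically isomorphic to a union of coordinate axes) singularities and then invoke that such unions of linearly independent lines are seminormal. Concretely, by Remark~\ref{rem:confronto}(iii), $Y^p$ is a reduced union of lines $r_\sigma=V(L_{1,i_1},\dots,\widehat{L_{k,i_k}},\dots,L_{n,i_n})$, and two such lines meet only if they share $n-2$ of their $n-1$ defining linear forms, differing in a single slot. At a point $p$ where $d$ of these lines meet, the tangent directions are spanned by vectors dual to the $d$ distinct linear forms that get ``dropped''; condition $(\beta)$ guarantees that any such collection of at most $n$ of the $L_{j,i}$ is linearly independent (dimension $\min\{n+1,k\}$ in $R_1$, hence independent modulo $\mathfrak m_p$ once $d\le n$), so the $d\le n$ lines through $p$ have linearly independent tangent directions and $\widehat{\mathcal O_{Y^p,p}}\cong K[[t_1,\dots,t_d]]/(t_it_j: i<j)$, the completed local ring of $d$ coordinate axes. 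One then quotes that $K[t_1,\dots,t_d]/(t_it_j)$ is seminormal (a classical fact, e.g.\ via \cite{Traverso, GrecoTraverso}: it equals its own seminormalisation since its normalisation is $\prod K[t_i]$ and the conductor square condition for the gluing of disjoint smooth branches at a point is met precisely for independent directions), whence $Y^p$ is seminormal at every point; at smooth points there is nothing to prove.

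The main obstacle I expect is in part (i): proving \emph{non}-seminormality requires producing an explicit element in the seminormalisation (or of the integral closure satisfying the seminormality test $t^2,t^3\in R\Rightarrow t\in R$) that is not in $\mathcal O_{Y,p}$, and this needs a genuine computation of the local ring of the ``coordinate cone over $s$ points''. The cleanest route is probably to use the known description of the conductor of a union of lines through a point — the local ring of $s$ concurrent lines in general position has seminormalisation strictly larger once $s>n$ because the $s$ branches, while pairwise independent is impossible, fail the gluing condition — and to reduce the grid's singular point to exactly this model via the lifting structure. Part (ii), by contrast, should be essentially a formal consequence of condition $(\beta)$ plus the classical seminormality of coordinate-axis unions, so the effort there is mainly bookkeeping: identifying which linear forms are dropped at a common point and checking $d\le n$ always holds (indeed $d\le n$ automatically, since the $d$ lines through $p$ correspond to $d$ distinct choices in a single slot among at most $n$ slots, and any two lines in distinct families $k\ne k'$ through a common point would force $n$ independent forms to vanish at $p$, contradicting $(\beta)$ — so in fact $d\le \max_j\ell_j$ and certainly the relevant linear forms number at most $n$).
```
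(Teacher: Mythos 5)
You have located the right singular point for part (i)---the union of the $k$-th family of lines is the cone with vertex $p_{\infty,k}$ over its trace in a hyperplane, and the hypothesis $\ell_1\cdots\widehat{\ell_k}\cdots\ell_n>n$ puts more than $n$ lines of $Y$ through $p_{\infty,k}$---but the crux of (i), namely that more than $n$ concurrent lines in $\mathbb P^n_K$ can never be seminormal at the common point, is exactly the step you leave unproved, and your proposed justification misses: the trace points of a grid form a lattice, not a set of points in general position, so the appeal to ``a cone over $s>n$ \emph{general} points is not seminormal'' does not apply as stated, and the genus/$H^0$ argument is not carried out. In fact no genericity is needed: the paper settles (i) by quoting the characterization of \cite[Theorem~2.1]{COR2012} (based on \cite{O76}), by which a complete grid is seminormal if and only if no $n+1$ of its lines are concurrent; the underlying reason is that the seminormalization of $s$ concurrent smooth branches has embedding dimension $s$ at the point, which is impossible inside $\mathbb P^n_K$ once $s>n$. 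With that criterion quoted or reproved along these lines, (i) is immediate; as written, your part (i) is a plan with an acknowledged gap at its only nontrivial step. (A minor slip: the lines of the $k$-th family meet $\{x_k=0\}$ in \emph{distinct} points; the concurrency is at $p_{\infty,k}\in\{x_0=0\}$, so the local model is seen in the chart $x_k\neq 0$.)

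Your part (ii) takes a genuinely different route from the paper, which simply invokes \cite[Corollary 4.7]{CMR2005}: you argue directly that condition $(\beta)$ bounds by $n$ the number of entries of the matrix vanishing at any point, hence the concurrent lines there have linearly independent directions and the local ring is a union of coordinate axes, which is seminormal. This is workable and more self-contained, but your case analysis contains a genuine error: two lines from distinct families $k\neq k'$ \emph{can} meet under $(\beta)$---if they agree in the rows other than $k,k'$, then exactly $n$ entries of $A$ vanish at a common point, and $n$ independent linear forms vanishing simultaneously is no contradiction (condition $(\beta)$ only excludes $n+1$); indeed at every point $\bigcap_{j}V(L_{j,i_j})$ exactly $n$ lines of the pseudo-grid, one from each family, are concurrent. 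This case must be included, not excluded, in your bookkeeping; fortunately the $n$ directions at such a point are dual to the $n$ independent forms and hence independent, so your conclusion survives once that part of the argument is corrected.
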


\begin{proof}
For a complete grid of lines, every singular point is an isolated singular point and intersection of lines. Then we can use the characterization that has given in \cite[Theorem~2.1]{COR2012} on the base of results in \cite[Theorems 2.9 and 3.1]{O76}. 

For what concerns item {\em (i)}, from \cite[Theorem 2.1]{COR2012} we have that a complete grid of lines $Y$ is seminormal if and only if every intersection of $n+1$ lines of $Y$ is empty. However, through each point at infinity $p_{\infty,k}$ there are at least $n+1$ lines of the complete grid, by the hypothesis. If $n=3$ we can also refer to \cite[Theorem~26]{GO2007}.

Item {\em (ii)} is \cite[Corollary 4.7]{CMR2005} applied to those  complete pseudo-grids of lines that are determined by a $1$-pseudo-lifting matrix satisfying condition $(\beta)$.
\end{proof}

\begin{example}\label{ex:non seminormale}
From \cite[Example 2.1]{GOR2014}, consider the configuration $C$ of $8$ lines that is obtained by eliminating from the edges of a cube in $\mathbb P^3_K$ four lines parallel to a same coordinate axis. The cube is a complete grid of type $(2,2,2)$ and $C$ can be obtained by the $1$-lifting of the unmixed monomial ideal $J=(x_1^2,x_2x_3)$ induced by the matrix $A=\left(\begin{array}{cc}x_1 & x_1-x_0\\ x_2 & x_2-x_0 \\ x_3 & x_3-x_0\end{array}\right)$. From Proposition \ref{prop:seminormality}, $C$ is not seminormal. Nevertheless, a pseudo-$1$-lifting of $J$ by a matrix satisfying condition ($\beta$) is seminormal.  
\end{example}

\section*{Acknowledgment}

We thank the referee very much for his/her suggestions and comments, which have been crucial for establishing connections and making comparisons with related areas of research.  

The first author is member of “National Group for Algebraic and Geometric
Structures, and their Applications” (GNSAGA-INDAM).

\begin{small}
\section*{Appendix}

We give a detailed description of the three mapping cones that are needed in order to obtain the maps of the minimal free resolution of the monomial ideal 
$$\Omega=(x_1^{\ell_1}x_2^{\ell_2}, x_1^{\ell_1}x_3^{\ell_3}, x_1^{\ell_1}x_4^{\ell_4}, x_2^{\ell_2}x_3^{\ell_3},x_2^{\ell_2}x_4^{\ell_4},x_3^{\ell_3}x_4^{\ell_4}) = (x_1^{\ell_1}x_2^{\ell_2}, x_1^{\ell_1}x_3^{\ell_3}, x_1^{\ell_1}x_4^{\ell_4})+ \Omega_{[2]},$$ 

\noindent with $ \Omega_{[2]}= (x_2^{\ell_2}x_3^{\ell_3},x_2^{\ell_2}x_4^{\ell_4}) + \Omega_{[3]}$ and  $ \Omega_{[3]} = (x_3^{\ell_3}x_4^{\ell_4})$.\\
Let $f_1:=x_1^{\ell_1}x_2^{\ell_2}$, $f_2:=x_1^{\ell_1}x_3^{\ell_3}$, $f_3:=x_1^{\ell_1}x_4^{\ell_4}$, $f_4:=x_2^{\ell_2}x_3^{\ell_3}$, $f_5:=x_2^{\ell_2}x_4^{\ell_4}$, $f_6:=x_3^{\ell_3}x_4^{\ell_4}$.
According to the construction of Section \ref{sec:mapping cone}, the minimal free resolution of $\Omega_{[2]}$ is
$$0\ra F_1 \stackrel{\varphi_1}{\ra} F_0 \stackrel{\varphi_0}{\ra} \Omega_{[2]} \ra 0$$
where $F_1:=S^2(-\ell_2-\ell_3-\ell_4)$ and $F_0:=S(-\ell_2-\ell_3)\oplus S(-\ell_2-\ell_4) \oplus S(\ell_3-\ell_4)$, and 

$$\begin{array}{ll} 
 M_{\varphi_0}=\Bigl(\begin{array}{ccc} x_2^{\ell_2}x_3^{\ell_3} & x_2^{\ell_2}x_4^{\ell_4} & x_3^{\ell_3}x_4^{\ell_4} \end{array}\Bigr),
&
M_{\varphi_1}=\left(\begin{array}{cc} 
-x_4^{\ell_4}  & 0\\
0 & -x_3^{\ell_3} \\
x_2^{\ell_2} & x_2^{\ell_2}
\end{array}\right).
\end{array}$$

\noindent {\bf First step.} The ideal $(\Omega_{[2]} : f_3) = %((x_2^{\ell_2}x_3^{\ell_3},x_2^{\ell_2}x_4^{\ell_4},x_3^{\ell_3}x_4^{\ell_4}) : x_1^{\ell_1}x_4^{\ell_4}) =
 (x_2^{\ell_2}, x_3^{\ell_3} )=:\mathfrak b_4$ has the minimal free resolution

$0 \ra E_1^{\mathfrak b_4} \stackrel{\delta_1^{\mathfrak b_4}}{\ra} E_0^{\mathfrak b_4} \stackrel{\delta_0^{\mathfrak b_4}}{\ra} \mathfrak b_4 \ra 0$,
where $E_1^{\mathfrak b_4}=S(-\ell_2-\ell_3)$ and $E_0^{\mathfrak b_4}=S(-\ell_2)\oplus S(-\ell_3)$, and 
$$\begin{array}{ll} 
M_{\delta_0}^{\mathfrak b_4}=\Bigl(\begin{array}{cc} x_2^{\ell_2} & x_3^{\ell_3}  \end{array}\Bigr), 
&
M_{\delta_1}^{\mathfrak b_4}=\left(\begin{array}{c} 
-x_3^{\ell_3} \\
x_2^{\ell_2} 
\end{array}\right).
\end{array}$$

\noindent Thus, we obtain the mapping cone ${\bf F}^{\mathfrak b_4}$:
$$0\ra E_1^{\mathfrak b_4}(-\ell_1-\ell_4) \stackrel{\gamma_2^{\mathfrak b_4}}{\ra} E_0^{\mathfrak b_4}(-\ell_1-\ell_4)\oplus F_1 \stackrel{\gamma_1^{\mathfrak b_4}}{\ra} S(-\ell_1-\ell_4) \oplus F_0 \stackrel{\gamma_0^{\mathfrak b_4}}{\ra} (x_1^{\ell_1}x_4^{\ell_4})+\Omega_{[2]}S $$
from
$$\begin{array}{ccccccl}
0 & \ra & E_1^{\mathfrak b_4} (- \ell_1-\ell_4) &  \stackrel{\delta_1^{\mathfrak b_4}}{\ra} & E_0^{\mathfrak b_4} (- \ell_1-\ell_4)&  \stackrel{\delta_0^{\mathfrak b_4}}{\ra} & \mathfrak b_4 (- \ell_1-\ell_4)  \\
  & & \downarrow  \alpha_1^{\mathfrak b_4} & &  \downarrow  \alpha_0^{\mathfrak b_4} & &  \downarrow  \cdot \, x_1^{\ell_1}x_4^{\ell_4} \\
0 &\ra& F_1^{\mathfrak b_4} &  \stackrel{\varphi_1^{\mathfrak b_4}}{\ra} & F_0^{\mathfrak b_4} & \stackrel{\varphi_0^{\mathfrak b_4}}{\ra} & \Omega_{[2]} 
\end{array}$$
with

$$M_{\alpha_0^{\mathfrak b_4}}=\left(\begin{array}{rr} 
 0 & 0 \\
x_1^{\ell_1} & 0 \\
0 & x_1^{\ell_1} 
\end{array}\right), 
M_{\alpha_1^{\mathfrak b_4}}=\left(\begin{array}{r} 0 \\ x_1^{\ell_1} \end{array}\right),  
M_{\gamma_0^{\mathfrak b_4}}=\Bigl(\begin{array}{rrrr} x_1^{\ell_1}x_4^{\ell_4} & x_2^{\ell_2}x_3^{\ell_3} & x_2^{\ell_2}x_4^{\ell_4} & x_3^{\ell_3}x_4^{\ell_4} \end{array}\Bigr)
$$

$$\begin{array}{ll}  
M_{\gamma_1^{\mathfrak b_4}}=\left(\begin{array}{rrrr} 
-x_2^{\ell_2}  & -x_3^{\ell_3} & 0 & 0\\
0 & 0 &- x_4^{\ell_4} & 0 \\
x_1^{\ell_1} & 0 & 0 & -x_3^{\ell_3} \\
0 & x_1^{\ell_1} & x_2^{\ell_2} & x_2^{\ell_2}
\end{array}\right), &
M_{\gamma_2^{\mathfrak b_4}}=\left(\begin{array}{r} x_3^{\ell_3} \\ -x_2^{\ell_2} \\ 0 \\ x_1^{\ell_1}\end{array}\right). 
\end{array}$$

\noindent{\bf Second Step.} The ideal
$((f_3)+\Omega_{[2]}) :( f_2) =
% ((x_1^{\ell_1}x_4^{\ell_4}, x_2^{\ell_2}x_3^{\ell_3},x_2^{\ell_2}x_4^{\ell_4},x_3^{\ell_3}x_4^{\ell_4}) : %x_1^{\ell_1}x_3^{\ell_3})
 (x_2^{\ell_2}, x_4^{\ell_4})=: \mathfrak b_3$ has the minimal free resolution
$0 \ra E_1^{\mathfrak b_3} \stackrel{\delta_1^{\mathfrak b_3}}{\ra} E_0^{\mathfrak b_3} \stackrel{\delta_0^{\mathfrak b_3}}{\ra} \mathfrak b_3 \ra 0$,
where $E_1^{\mathfrak b_3}=S(-\ell_2-\ell_4)$ and $E_0^{\mathfrak b_3}=S(-\ell_2)\oplus S(-\ell_4)$, and 
$$\begin{array}{ll} 
M_{\delta_0}^{\mathfrak b_3}=\Bigl(\begin{array}{cc} x_2^{\ell_2} & x_4^{\ell_4}  \end{array}\Bigr), 
&
M_{\delta_1}^{\mathfrak b_3}=\left(\begin{array}{c} 
-x_4^{\ell_4} \\
x_2^{\ell_2} 
\end{array}\right).
\end{array}$$
Thus, we obtain the mapping cone ${\bf F}^{\mathfrak b_3}$:
$$0\ra E_1^{\mathfrak b_3}(-\ell_1-\ell_3)\oplus F_2^{\mathfrak b_4} \stackrel{\gamma_2^{\mathfrak b_3}}{\ra} E_0^{\mathfrak b_3}(-\ell_1-\ell_3) \oplus F_1^{\mathfrak b_4}\stackrel{\gamma_1^{\mathfrak b_3}}{\ra} S(-\ell_1-\ell_3) \bigoplus F_0^{\mathfrak b_4} \stackrel{\gamma_0^{\mathfrak b_3}}{\ra} (x_1^{\ell_1}x_3^{\ell_3}, x_1^{\ell_1}x_4^{\ell_4})+\Omega_{[2]}$$
from

$$\begin{array}{ccccccccl}
0 & \ra & 0 & \ra & E_1^{\mathfrak b_3} (- \ell_1-\ell_3) &  \stackrel{\delta_1^{\mathfrak b_3}}{\ra} & E_0^{\mathfrak b_3} (- \ell_1-\ell_3)&  \stackrel{\delta_0^{\mathfrak b_3}}{\ra} & \mathfrak b_3 (- \ell_1-\ell_3)  \\
& &  & & \downarrow  \alpha_1^{\mathfrak b_3} & &  \downarrow  \alpha_0^{\mathfrak b_3} & &  \downarrow  \cdot \, x_1^{\ell_1}x_3^{\ell_3} \\
0 &\ra& F_2^{\mathfrak b_4} & \stackrel{\varphi_2^{\mathfrak b_4}}{\ra} & F_1^{\mathfrak b_4} &  \stackrel{\varphi_1^{\mathfrak b_4}}{\ra} & F_0^{\mathfrak b_4} & \stackrel{\varphi_0^{\mathfrak b_4}}{\ra} & (x_1^{\ell_1}x_4^{\ell_4})+ \Omega_{[2]} 
\end{array}$$
with $\varphi_i^{\mathfrak b_4} := \gamma_i^{\mathfrak b_4}$ and

$$
M_{\alpha_0^{\mathfrak b_3}}=\left(\begin{array}{cc} 
 0 & 0 \\
x_1^{\ell_1} & 0 \\
0 & 0 \\
0 & x_1^{\ell_1} 
\end{array}\right),
M_{\alpha_1^{\mathfrak b_3}}=\left(\begin{array}{c} 0 \\ 0  \\ x_1^{\ell_1} \\ 0 \end{array}\right),
M_{\gamma_0^{\mathfrak b_3}}=\Bigl(\begin{array}{rrrrr} x_1^{\ell_1}x_3 ^{\ell_3} & x_1^{\ell_1}x_4^{\ell_4} & x_2^{\ell_2}x_3^{\ell_3} & x_2^{\ell_2}x_4^{\ell_4} & x_3^{\ell_3}x_4^{\ell_4} \end{array}\Bigr).
$$

$$\begin{array}{ll} 
M_{\gamma_1^{\mathfrak b_3}}=\left(\begin{array}{cccccc} 
-x_2^{\ell_2}  & -x_4^{\ell_4} & 0 & 0 & 0 & 0\\
0 & 0 & -x_2^{\ell_2} & -x_3^{\ell_3} & 0 & 0 \\
x_1^{\ell_1} & 0 & 0 & 0 & - x_4^{\ell_4} & 0 \\
0 & 0 & x_1^{\ell_1} & 0 & 0 & -x_3^{\ell_3} \\
0 & x_1^{\ell_1} & 0 & x_1^{\ell_1} & x_2^{\ell_2} & x_2^{\ell_2}
\end{array}\right), 
&
M_{\gamma_2^{\mathfrak b_3}}=\left(\begin{array}{cc} x_4^{\ell_4} & 0 \\- x_2^{\ell_2} & 0 \\ 0 &x_3^{\ell_3} \\ 0 & -x_2^{\ell_2} \\ x_1^{\ell_1} & 0 \\ 0 & x_1^{\ell_1}  \end{array}\right).
\end{array}$$

\noindent {\bf Third Step.} The ideal
$$((f_2,f_3)+\Omega_{[2]}) :( f_1) =
% (x_1^{\ell_1}x_3^{\ell_3},x_1^{\ell_1}x_4^{\ell_4}, %x_2^{\ell_2}x_3^{\ell_3},x_2^{\ell_2}x_4^{\ell_4},x_3^{\ell_3}x_4^{\ell_4}) : x_1^{\ell_1}x_2^{\ell_2})
(x_3^{\ell_3}, x_4^{\ell_4}) =: \mathfrak b_2$$
has the minimal free resolution 
$0 \ra E_1^{\mathfrak b_2} \stackrel{\delta_1^{\mathfrak b_2}}{\ra} E_0^{\mathfrak b_2} \stackrel{\delta_0^{\mathfrak b_2}}{\ra} \mathfrak b_2 \ra 0$, 
where $E_1^{\mathfrak b_2}=S(-\ell_3-\ell_4)$ and $E_0^{\mathfrak b_2}=S(-\ell_3)\oplus S(-\ell_4)$, and 
$$\begin{array}{ll} 
M_{\delta_1}^{\mathfrak b_2}=\left(\begin{array}{r} 
-x_4^{\ell_4} \\
x_3^{\ell_3} 
\end{array}\right), & M_{\delta_0}^{\mathfrak b_2}=\Bigl(\begin{array}{rr} x_3^{\ell_3} & x_4^{\ell_4}  \end{array}\Bigr).
\end{array}$$
Thus, we obtain the mapping cone ${\bf F}^{\mathfrak b_2}$:
$$0\ra E_1^{\mathfrak b_2}(-\ell_1-\ell_2) \stackrel{\gamma_2^{\mathfrak b_2}}{\ra} E_0^{\mathfrak b_2}(-\ell_1-\ell_2) \oplus F_1^{\mathfrak b_3}\stackrel{\gamma_1^{\mathfrak b_2}}{\ra} S(-\ell_1-\ell_2) \bigoplus F_0^{\mathfrak b_3} \stackrel{\gamma_0^{\mathfrak b_2}}{\ra}$$ $$\stackrel{\gamma_0^{\mathfrak b_2}}{\ra} (x_1^{\ell_1}x_2^{\ell_2}, x_1^{\ell_1}x_3^{\ell_3}, x_1^{\ell_1}x_4^{\ell_4})+\Omega_{[2]}S$$
from
$$\begin{array}{ccccccccl}
0 & \ra & 0 & \ra & E_1^{\mathfrak b_2} (- \ell_1-\ell_2) &  \stackrel{\delta_1^{\mathfrak b_2}}{\ra} & E_0^{\mathfrak b_2} (- \ell_1-\ell_2)&  \stackrel{\delta_0^{\mathfrak b_2}}{\ra} & \mathfrak b_2 (- \ell_1-\ell_2)  \\
& &  & & \downarrow  \alpha_1^{\mathfrak b_2} & &  \downarrow  \alpha_0^{\mathfrak b_2} & &  \downarrow  \cdot \, x_1^{\ell_1}x_2^{\ell_2} \\
0 &\ra& F_2^{\mathfrak b_3} & \stackrel{\varphi_2^{\mathfrak b_3}}{\ra} & F_1^{\mathfrak b_3} &  \stackrel{\varphi_1^{\mathfrak b_3}}{\ra} & F_0^{\mathfrak b_3} & \stackrel{\varphi_0^{\mathfrak b_3}}{\ra} & (x_1^{\ell_1}x_3^{\ell_3}, x_1^{\ell_1}x_4^{\ell_4})+ \Omega_{[2]} 
\end{array}$$
with $\varphi_i^{\mathfrak b_3} := \gamma_i^{\mathfrak b_3}$ and
$$\begin{array}{ll}
M_{\alpha_0^{\mathfrak b_2}}=\left(\begin{array}{rr} 
 0 & 0 \\
 0 & 0 \\
x_1^{\ell_1} & 0 \\
0 & x_1^{\ell_1} \\
0 & 0 
\end{array}\right),
&
M_{\alpha_1^{\mathfrak b_2}}=\left(\begin{array}{r} 0 \\ 0 \\ 0\\ 0 \\  x_1^{\ell_1} \\  -x_1^{\ell_1} \end{array}\right), 
\end{array}$$

$$M_{\gamma_0^{\mathfrak b_2}}=\Bigl(\begin{array}{rrrrrr} x_1^{\ell_1}x_2^{\ell_2} & x_1^{\ell_1}x_3 ^{\ell_3} & x_1^{\ell_1}x_4^{\ell_4} & x_2^{\ell_2}x_3^{\ell_3} & x_2^{\ell_2}x_4^{\ell_4} & x_3^{\ell_3}x_4^{\ell_4} \end{array}\Bigr),
$$

$$\begin{array}{ll} 
M_{\gamma_1^{\mathfrak b_2}}=\left(\begin{array}{cccccccc} 
-x_3^{\ell_3}  & -x_4^{\ell_4} & 0 & 0 & 0 & 0 & 0 & 0\\
0 & 0 & -x_2^{\ell_2} & -x_4^{\ell_4} & 0 & 0 & 0 & 0 \\
0 & 0 & 0 & 0 & -x_2^{\ell_2} & -x_3^{\ell_3} & 0 & 0 \\
x_1^{\ell_1} & 0 & x_1^{\ell_1} & 0 & 0 & 0 & -x_4^{\ell_4} & 0 \\
0 & x_1^{\ell_1} & 0 & 0 & x_1^{\ell_1} & 0 & 0  &  -x_3^{\ell_3} \\
0 & 0 & 0 & x_1^{\ell_1} & 0 & x_1^{\ell_1} &  x_2^{\ell_2} & x_2^{\ell_2}
\end{array}\right), 
&
M_{\gamma_2^{\mathfrak b_2}}=\left(\begin{array}{ccc}
 x_4^{\ell_4} & 0 &0\\
 -x_3^{\ell_3} &0 &0 \\
 0 &x_4^{\ell_4} & 0\\
 0 & -x_2^{\ell_2} & 0 \\
 0 & 0 & x_3^{\ell_3} \\ 
0 & 0 &- x_2^{\ell_2} \\
 x_1^{\ell_1} &  x_1^{\ell_1} & 0 \\ 
- x_1^{\ell_1} & 0  & x_1^{\ell_1} \end{array}\right).
\end{array}$$

We notice that the matrices $M_{\gamma_i^{\mathfrak b_2}}$ that we have just described correspond to the minimal free resolution with modules 

\vskip 1mm
\noindent $H_2:=E_1^{\mathfrak b_2}(-\ell_1-\ell_2) = S^3(-\ell_1 -\ell_2 -\ell_3 -\ell_4)$, %questo e' ordinato,

\vskip 1mm 
\noindent $H_1:=E_0^{\mathfrak b_2}(-\ell_1-\ell_2) \oplus F_1^{\mathfrak b_3} = S(-\ell_1 -\ell_2 -\ell_3) \oplus    S(-\ell_1 -\ell_2 -\ell_4) \oplus   S(-\ell_1 -\ell_2 -\ell_3) \oplus  S(-\ell_1 -\ell_3 -\ell_4) \oplus  S(-\ell_1 -\ell_2 -\ell_4) \oplus  S(-\ell_1 -\ell_3 -\ell_4) \oplus   
S^2(-\ell_2 -\ell_3 -\ell_4)$, %disordinato,

\vskip 1mm 
\noindent $H_0:= S(-\ell_1-\ell_2) \bigoplus F_0^{\mathfrak b_3} =  S(-\ell_1-\ell_2) \oplus S(-\ell_1-\ell_3) \oplus S(-\ell_1-\ell_4) \oplus S(-\ell_2-\ell_3) \oplus S(-\ell_2-\ell_4) \oplus S(-\ell_3-\ell_4)$. %questo e' ordinato.

\smallskip
If we replace the module $H_1$ by the isomorphic module  

\noindent $G_1:=S^2(-\ell_1 -\ell_2 -\ell_3) \oplus  S^2(-\ell_1 -\ell_2 -\ell_4) \oplus  S^2(-\ell_1 -\ell_3 -\ell_4) \oplus  S^2(-\ell_2 -\ell_3 -\ell_4)$ 

\noindent in order to exactly obtain formula \eqref{eq:conjecture}, then we obviously have
$$0\ra H_2 \stackrel{\gamma_2}{\ra}
 G_1 \stackrel{\gamma_1}{\ra}
 H_0 \stackrel{\gamma_0}{\ra} \Omega$$
with $M'_{\gamma_0}= M_{\gamma_0^{\mathfrak b_2}}$ and  
$$\begin{array}{ll}
M'_{\gamma_2}=\left(\begin{array}{ccc}
 x_4^{\ell_4} & 0 &0\\
 0 &x_4^{\ell_4} &0 \\
 -x_3^{\ell_3} &0 & 0\\
 0 & 0 & x_3^{\ell_3} \\
 0 & -x_2^{\ell_2} & 0 \\ 
0 & 0 &- x_2^{\ell_2} \\
 x_1^{\ell_1} &  x_1^{\ell_1} & 0 \\ 
- x_1^{\ell_1} & 0  & x_1^{\ell_1} \end{array}\right),
&
M'_{\gamma_1}=\left(\begin{array}{cccccccc} 
-x_3^{\ell_3}  & 0 & -x_4^{\ell_4} & 0 & 0 & 0 & 0 & 0\\
0 & -x_2^{\ell_2} & 0 & 0 &- x_4^{\ell_4} & 0 & 0 & 0 \\
0 & 0 & 0  & -x_2^{\ell_2} & 0 & -x_3^{\ell_3} & 0 & 0 \\
x_1^{\ell_1} &  x_1^{\ell_1} & 0 & 0 & 0 & 0 & -x_4^{\ell_4} & 0 \\
0 & 0 & x_1^{\ell_1}  &  x_1^{\ell_1} & 0 & 0 & 0  &  -x_3^{\ell_3} \\
0 & 0 & 0 & 0 & x_1^{\ell_1}  & x_1^{\ell_1} &  x_2^{\ell_2} & x_2^{\ell_2}
\end{array}\right). 
\end{array}
$$
\end{small}

%\bibliographystyle{amsplain}
%\bibliography{grid}

\providecommand{\bysame}{\leavevmode\hbox to3em{\hrulefill}\thinspace}
\providecommand{\MR}{\relax\ifhmode\unskip\space\fi MR }
% \MRhref is called by the amsart/book/proc definition of \MR.
\providecommand{\MRhref}[2]{%
  \href{http://www.ams.org/mathscinet-getitem?mr=#1}{#2}
}
\providecommand{\href}[2]{#2}

\end{document}